\numberwithin{equation}{section}
\theoremstyle{plain}
\newtheorem{Th}{Theorem}[section]
\newtheorem{Lemma}[Th]{Lemma}
\newtheorem{Cor}[Th]{Corollary}
\newtheorem{Prop}[Th]{Proposition}
 \theoremstyle{definition}
\newtheorem{Def}[Th]{Definition}
\newtheorem{Conv}[Th]{Convention}
\newtheorem{Rem}[Th]{Remark}
\newtheorem{Rems}[Th]{Remarks}
\newtheorem{Ex}[Th]{Examples}
\DeclareMathOperator{\Exp}{Exp}
\DeclareMathOperator{\Log}{Log}
\DeclareMathOperator{\Mot}{Mot}
\DeclareMathOperator{\rk}{rk}
\DeclareMathOperator{\tr}{tr}
\DeclareMathOperator{\res}{res}
\DeclareMathOperator{\Spec}{Spec}
\DeclareMathOperator{\Aut}{Aut}
\DeclareMathOperator{\End}{End}
\DeclareMathOperator{\GL}{GL}
\DeclareMathOperator{\BGL}{BGL}
\DeclareMathOperator{\Id}{Id}
\DeclareMathOperator{\Opf}{Opf}
\DeclareMathOperator*\colim{colim}
\newcommand{\Addresses}{{
  \bigskip
  \footnotesize

  Ruoxi Li, \textsc{Department of Mathematics, University of Pittsburgh, Pittsburgh, PA, USA}\par\nopagebreak
  \textit{E-mail address}: \texttt{rul44@pitt.edu}
}}
\title[Motivic classes of stacks in finite characteristic]{Motivic classes of stacks in finite characteristic and applications to stacks of Higgs bundles}
\author{Ruoxi Li}
\begin{document}

\begin{abstract} 
We define a ring of motivic classes of stacks suitable for symmetric powers in finite characteristic. Let $X$ be a smooth projective curve over a field of arbitrary characteristic. We calculate the motivic classes of the moduli stacks of semistable Higgs bundles on $X$. This recovers results of Fedorov, A. Soibelman and Y. Soibelman in characteristic zero, as well as those of Mozgovoy and Schiffmann for finite fields. We also obtain a simpler formula for the motivic classes of the stacks of Higgs bundles in the universal $\lambda$-ring quotient using Mellit's results.
\end{abstract}

\maketitle
\maketitle
\tableofcontents

\section{Introduction and main results}
In the seminal paper \cite{Sch}, O. Schiffmann has given explicit formulas for the number of absolutely indecomposable vector bundles on a smooth projective curve~$X$ defined over a finite field. This is essentially equivalent to calculating volumes of the stacks of Higgs bundles on $X$ (see \cite[Theorem 1.2]{Sch}, \cite[Theorem 4.9, Corollary 4.10]{MS1}). The corresponding formulas for the volumes of the stacks of Higgs bundles are given in \cite[Theorem 1.1(ii)]{MS1}.

In \cite{FSS1} similar formulas were given for \emph{motivic} volumes of the stacks of Higgs bundles. It would be nice to reprove the formulas of \cite{Sch} and \cite{MS1} by applying counting measures to motivic formulas. Unfortunately, it is assumed in \cite{FSS1} that characteristic is zero. In this paper, we fix this problem by giving the ``corrected'' definitions of rings of motivic classes of stacks so that the formulas of \cite{FSS1} become valid in finite characteristic as well. Thus, our main theorem generalizes both \cite[Theorem 1.3.3]{FSS1} and \cite[Theorem 1.1(ii)]{MS1}. We
also obtain simpler formulas for the motivic classes of the stacks of Higgs bundles in the universal $\lambda$-ring quotient using Mellit's results, see \cite{Mel1,Mel2}.
\subsection{Motivic classes of stacks}
Compared to the characteristic zero case, the difference is that we need to add a ``surjective and universally injective relation'' for the definitions of motivic classes. 
\begin{Conv}\label{Stabilizer}
All the stacks considered in this paper will be Artin stacks locally of finite type over a field $k$ such that the stabilizers of points are affine.
\end{Conv}
Recall from \cite[Tag04XG]{SP} that a point of a stack $\mathcal{X}$ is an equivalence class of morphisms $\Spec k \to\mathcal{X}$, where $k$ is a field. We denote by $\vert\mathcal{X}\vert$ the set of points of~$\mathcal{X}$. A morphism $f:\mathcal{X}\to\mathcal{Y}$ of $k$-stacks is called universally injective if for every morphism $\mathcal{Y}'\to\mathcal{Y}$, the induced map $\vert\mathcal{Y}'\times_\mathcal{Y}\mathcal{X}\vert\to\vert\mathcal{Y}'\vert$ is injective.
\begin{Def}[Definition \ref{motstacks}]\label{motstack}
Let $k$ be a field. The abelian group $\Mot(k)$ is the group generated by isomorphism classes of $k$-stacks modulo the following relations:
\begin{enumerate}[(i)]
\item $[\mathcal{X}]=[\mathcal{Y}]+[\mathcal{X}-\mathcal{Y}]$ where $\mathcal{Y}$ is a closed substack of $\mathcal{X}$.
\item $[\mathcal{X}]=[\mathcal{Y}]$ if there is a surjective and universally injective morphism $\mathcal{X}\to\mathcal{Y}$ of stacks over~$k$.
\item $[\mathcal{X}]=[\mathcal{Y}\times_k\mathbb{A}^r_k]$ where $\mathcal{X}\to\mathcal{Y}$ is a vector bundle of rank $r$.
\end{enumerate}
The class $[\mathcal{X}]$ in $\Mot(k)$ is called the \emph{motivic class} of the stack $\mathcal{X}$. The class of~$\mathbb{A}^1_k$ in $\Mot(k)$ is denoted by $\mathbb{L}$. We can give a ring structure for $\Mot(k)$ by $[\mathcal{X}]\cdot[\mathcal{Y}]\coloneqq[\mathcal{X}\times_k\mathcal{Y}]$.
\end{Def}
For $m\ge0$, let $F^m\Mot(k)$ be the subgroup generated by the classes of stacks of dimension $\le -m$. This is a ring filtration and we define the completed ring $\overline{\Mot}(k)$ as the completion of $\Mot(k)$ with respect to this filtration.
\subsection{Motivic zeta-function and plethystic exponents}
We define the motivic zeta-function for every reduced quasi-projective scheme~$Y$:
\[\zeta_{Y}(z)\coloneqq\sum_{n\ge 0}[Sym^nY]\cdot z^n\in 1+z\cdot\Mot(k)[[z]],\]
where $Sym^nY$ is the symmetric power $Y^n/S_n$ and $S_n$ is the permutation group (with the convention that $Sym^0Y=\Spec k$), see Section \ref{section22} below for more details.

We will see in Section \ref{sectioncomplete} that $\overline{\Mot}(k)$ is topologically generated by the classes $\mathbb{L}^{-n}[Y]$ where $Y$ is a reduced quasi-projective scheme. We will define $\Exp$ to be the unique continuous homomorphism \[\Exp\colon \overline{\Mot}[[z]]^{+}\to(1+\overline{\Mot}[[z]]^{+})^{\times}\]
such that \[\Exp(\mathbb{L}^{-n}[Y]z)=\zeta_Y(\mathbb{L}^{-n}z).\]
$\Exp$ is an isomorphism, so we denote by $\Log\coloneqq\Exp^{-1}$ its inverse isomorphism.

The following theorem is a generalization of \cite[Lemma 3.8.2]{FSS1} to arbitrary characteristic.
\begin{Th}[Theorem \ref{power}]
Assume that $M$ is a scheme, $\mathcal{A}_0=\Spec k$ and $\mathcal{A}_1,\mathcal{A}_2,\ldots,\mathcal{A}_n,\ldots$ are stacks. Put $A(z)=[\mathcal{A}_0]+[\mathcal{A}_1]z+[\mathcal{A}_2]z^2+\cdots+[\mathcal{A}_n]z^n+\ldots$. Then we have \[\Exp(m\Log(A(z)))=1+\sum_{k=1}^{\infty}\left\{\sum_{k_i\colon \sum ik_i=k}\left[\left(\left(M^{\sum_i k_i}\backslash\Delta\right)\times\prod_i\mathcal{A}_i^{k_i}\right)/\prod_i{S_{k_i}}\right]\right\}\cdot z^k,\]
where $\Delta$ is the ``large diagonal'' in $M^{\sum_i k_i}$ which consists of $\left(\sum_i k_i\right)$-tuples of points of $M$ with at least two coinciding ones.
\end{Th}
This theorem is crucial for calculating motivic classes of various stacks such as stacks of Higgs bundles. In finite characteristic, this theorem is not valid without the ``surjective and universal injective relations'' we have added to the definition of the ring of motivic classes.
\subsection{Counting measure}
Fix a prime power $q$. If $\mathcal{X}$ is a stack of finite type over the finite field $\mathbb{F}_q$, we denote by $\vert\mathcal{X}(\mathbb{F}_q)\vert$ the volume of the groupoid $\mathcal{X}(\mathbb{F}_q)$.

Recall that a pre-$\lambda$-ring is a commutative ring $R$
endowed with a group homomorphism $\lambda\colon R\to(1+zR[[z]])^{\times}$ such that $\lambda\equiv 1+\Id_R\ (\text{mod }z^2R[[z]])$. Recall that a $\lambda$-ring is a pre-$\lambda$-ring $R$ such that $\lambda(1)=1+z$ and for all $x,y\in R$ and $m,n\in\mathbb{Z}_{\ge0}$, $\lambda_n(xy)$ and $\lambda_n(\lambda_m(x))$ can be expressed in terms of $\lambda_i(x)$ and $\lambda_j(y)$ using the ring operations in a standard way (see \cite[Expos\'e~V, D\'efinition 2.4, (2.4.1)]{SGA6} and \cite[Chapter I, Section 1]{Knu}). Let $\overline{\Mot}^{\lambda-ring}(k)$ be the universal $\lambda$-ring quotient of $\overline{\Mot}(k)$ defined in Section \ref{section260}.

We denote by $\overline{\Mot}_{fin-type}(\mathbb{F}_q)$ the image of $\Mot(\mathbb{F}_q)$ in $\overline{\Mot}(\mathbb{F}_q)$. We want to pass from pre-$\lambda$-rings to $\lambda$-rings (see Section \ref{section260}). We denote by $\overline{\Mot}_{fin-type}^{\lambda-ring}(\mathbb{F}_q)$ the image of the composition \[\Mot(\mathbb{F}_q)\to\overline{\Mot}(\mathbb{F}_q)\to\overline{\Mot}^{\lambda-ring}(\mathbb{F}_q).\]
We have the following ring homomorphism, called the \emph{counting measure}.
\begin{Th}[Theorem \ref{countingstack}]
\begin{enumerate}[(i)]
    \item We have a unique ring homomorphism
\[\#\colon \Mot(\mathbb{F}_q)\to\mathbb{Q}\]
such that for every finite type $\mathbb{F}_q$-stack $\mathcal{X}$, $\#([\mathcal{X}])=\vert\mathcal{X}(\mathbb{F}_q)\vert$.
\item For any $A\in\Mot(\mathbb{F}_q)$, the counting measure $\#(A)$ depends only on the image of $A$ in $\overline{\Mot}^{\lambda-ring}(\mathbb{F}_q)$.
\end{enumerate}
\end{Th}
\subsection{Moduli stacks of Higgs bundles}
Fix a smooth projective geometrically connected curve $X$ over $k$. A Higgs bundle on $X$ is a pair $(E,\Phi)$ where $E$ is a vector bundle on $X$ and $\Phi\colon E\to E\otimes\Omega_X$ is a ``twisted'' $\mathcal{O}_X$-linear morphism. The rank and the degree of the pair $(E,\Phi)$ are the rank and the degree of the vector bundle $E$.

We use $\mathcal{H}igg_{r,d}$ to denote the moduli stack of rank $r$ degree $d$ Higgs bundles on~$X$. Also denote by $\mathcal{H}iggs^{ss}_{r,d}$ the open substack of semistable Higgs bundles. The stack~$\mathcal{H}iggs^{ss}_{r,d}$ is an Artin stack of finite type. We refer the reader to
Section \ref{section41} for more details.
\subsection{Explicit formulas for motivic classes}
We have a similar result compared to \cite[Theorem 1.3.3]{FSS1} for the motivic classes
of moduli stacks of semistable Higgs bundles. In particular, for $r\ge0$ we define classes $H_{r,d}\in\overline{\Mot}(k)$ by complicated but explicit formulas, see Sections \ref{section44} and \ref{section45}. The following theorem gives an explicit formula for the motivic classes of the stacks of Higgs bundles.
\begin{Th}[Theorem \ref{higgsformula}]\label{higgsformulas}
Let $k$ be a field of arbitrary characteristic and let $X$ be a smooth projective geometrically connected curve over $k$. Assume that there exists a $k$-rational divisor of degree $1$ on $X$. For any $r>0$ and any $d$, if $e$ is large enough then we have in $\overline{\Mot}(k)$
\[[\mathcal{H}iggs^{ss}_{r,d}(X)]=H_{r,d+er}.\]
\end{Th}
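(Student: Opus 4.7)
The plan is to adapt the Fedorov--Soibelman--Soibelman strategy from \cite{FSS1} to arbitrary characteristic, with the newly added relation~(ii) in Definition~\ref{motstack} as the main tool for repairing the characteristic-zero-only steps. The starting point is the Harder--Narasimhan stratification of $\mathcal{H}igg_{r,d}(X)$: every Higgs bundle has a canonical HN filtration with semistable subquotients, and the stratum corresponding to a HN type is a locally trivial $\mathbb{L}$-affine fibration over the product of the stacks $\mathcal{H}iggs^{ss}_{r_i,d_i}$ (the extensions of Higgs bundles being controlled by a hypercohomology group whose dimension is constant along the stratum). Summing over HN types produces a triangular identity in $\overline{\Mot}(k)$ expressing $[\mathcal{H}igg_{r,d}(X)]$ as an explicit polynomial in the classes $\{[\mathcal{H}iggs^{ss}_{r',d'}(X)]\}_{r'\le r}$ with coefficients in the zeta series of $X$; this system can be inverted in the completion to solve for $[\mathcal{H}iggs^{ss}_{r,d}(X)]$.

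Next, I would compute $[\mathcal{H}igg_{r,d+er}(X)]$ for $e\gg 0$ by means of the standard ``large-twist'' argument. After tensoring with a sufficiently ample line bundle one has $H^1(X,\mathcal{E}nd(E)\otimes\Omega_X)=0$ for all $E$ appearing in the moduli, so that the forgetful map from Higgs bundles to vector bundles is a vector bundle of constant rank; its motivic class can then be evaluated by relation~(iii). The classes $H_{r,d+er}$ defined in Sections~\ref{section44}--\ref{section45} are designed precisely so that applying the HN inversion of the previous paragraph to these vector-bundle calculations produces $H_{r,d+er}$ on the nose. The $k$-rational divisor of degree~$1$ is used here to realize the degree-shift by $er$ via an actual line bundle over $k$, giving a genuine isomorphism of stacks $\mathcal{H}iggs^{ss}_{r,d}(X)\cong \mathcal{H}iggs^{ss}_{r,d+er}(X)$ after twisting.

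The main obstacle, and the whole point of this paper, is to justify the steps of \cite{FSS1} that break in positive characteristic. These are precisely the steps that manipulate symmetric powers of the curve or its Jacobian, and the constructions that produce universal homeomorphisms (such as the passage between a stack and its reduced substack, or Frobenius twists arising in the resolution of the stack of pairs of a bundle and a section). In all of these instances the relevant comparison morphism is surjective and universally injective, so relation~(ii) turns it into an equality of motivic classes. Concretely, my plan is to trace the \cite{FSS1} argument line by line, and at each invocation of a characteristic-zero fact either cite a reference that holds in arbitrary characteristic, or factor the offending map as a universal homeomorphism followed by something whose motivic class is already understood. Once every such step has been patched, the explicit formula $[\mathcal{H}iggs^{ss}_{r,d}(X)]=H_{r,d+er}$ follows by the same combinatorial inversion as in the characteristic-zero case.
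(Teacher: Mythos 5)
Your proposal correctly identifies the high-level strategy --- adapt \cite{FSS1} and use the new surjective-and-universally-injective relation to repair the symmetric-power steps --- and the role of the $k$-rational divisor of degree~$1$. However, there are two concrete problems with the middle of your outline.

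First, the ``large-twist'' step as you describe it does not work. You cannot arrange $H^1(X,\mathcal{E}nd(E)\otimes\Omega_X)=0$ for all bundles $E$ of fixed rank and degree, no matter how large the twist, because the stack $\mathcal{H}iggs_{r,d}$ is not of finite type and neither $h^0$ nor $h^1$ of $\mathcal{E}nd(E)\otimes\Omega_X$ is bounded along $\mathcal{B}un_{r,d}$. In particular $[\mathcal{H}iggs_{r,d}]$ is not even a well-defined element of $\overline{\Mot}(k)$, and the forgetful morphism to $\mathcal{B}un_{r,d}$ is not a vector bundle of constant rank. What actually happens in the paper (following \cite[Lemma 3.5.2]{FSS1}) is that after restricting to the HN-nonnegative substack $\mathcal{H}iggs^{+}_{r,d}=\mathcal{H}iggs_{r,d}\times_{\mathcal{B}un_{r,d}}\mathcal{B}un^{+}_{r,d}$, which is of finite type, one uses Riemann--Roch/Serre duality to get the \emph{relation} $[\mathcal{H}iggs^{+}_{r,d}]=\mathbb{L}^{(g-1)r^2}[\mathcal{E}nd^{+}_{r,d}]$, exploiting only the constancy of the Euler characteristic, not vanishing of $H^1$.

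Second, and more seriously, your sketch omits the actual computational engine of the theorem. The whole difficulty is to produce the explicit generating series $\Omega_X^{Sch,mot}$ for $\sum_{r,d}[\mathcal{E}nd^{nilp,+}_{r,d}]w^rz^d$ (Theorem~\ref{nilp}); this is done via the motivic Hall algebra of coherent sheaves and Eisenstein-series manipulations as in \cite[Sections 5--6]{FSS1}. This is precisely where the paper makes the key finite-characteristic repair: one applies the homomorphism from the ``naive'' motivic Hall algebra to the corrected one (Section~\ref{section330}), and the one step in \cite[Proposition 5.7.1]{FSS1} that used the characteristic-zero power structure is replaced by our Theorem~\ref{power}, whose proof in turn rests on Lemma~\ref{G-invariant} about quotients by finite groups and the new relation. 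After that, one still has to relate $\mathcal{E}nd^{nilp,+}$ to $\mathcal{E}nd^{+}$ (Jordan--Chevalley / Hua-type factorization, again using the power structure) before the Kontsevich--Soibelman wall-crossing inversion you allude to can extract the semistable locus. Without the Hall-algebra step and the $\mathcal{E}nd^{nilp,+}\leadsto\mathcal{E}nd^{+}\leadsto\mathcal{H}iggs^{+}$ chain, the ``triangular identity in the zeta series'' you invoke has nothing to be inverted against, so the proof as proposed does not close.
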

In the finite field case, the above theorem implies \cite[Theorem 1.1(ii)]{MS1}, see Remark \ref{MS0}. When $r$ and $d$ are coprime, see also \cite[Theorem 1.2]{Sch}.
\subsection{Mellit's simplification}\label{section14}
Recall that $X$ is a smooth projective geometrically connected curve over $k$. We now give the definition of the motivic $L$-function, which is a polynomial of degree~$2g$, where $g\coloneqq g(X)$ is the genus of $X$, see \cite[Theorem~1.1.9]{Kap} and \cite[Proposition~1.3.1]{FSS1}:
\begin{equation}\label{Lfunction}
L^{mot}_X(z)\coloneqq\zeta_X(z)(1-z)(1-\mathbb{L}z)\in\Mot(k)[z].
\end{equation}
We will abuse notation and use $L^{mot}_X(z)$ to denote its image in $\overline{\Mot}^{\lambda-ring}(k)[z]$.

Set
\begin{equation}\label{omegamot}
\Omega_X^{mot}\coloneqq\sum_{\lambda}w^{\vert\lambda\vert}\mathbb{L}^{g\langle\lambda,\lambda\rangle}\prod_{\square\in\lambda}\frac{L^{mot}_X(z^{2a(\square)+1}\mathbb{L}^{-l(\square)-1})}{(z^{2a(\square)+2}-\mathbb{L}^{l(\square)})(z^{2a(\square)}-\mathbb{L}^{l(\square)+1})}\in\overline{\Mot}^{\lambda-ring}(k)[[z,w]]. 
\end{equation}
Here the sum is over all partitions $\lambda=(\lambda_1\ge\ldots\ge\lambda_i\ge\ldots)$ and the product is over all boxes of the partition, $a(\square)$ and $l(\square)$ are the arm and the leg of the box respectively, $\langle\lambda,\lambda\rangle\coloneqq\sum_i(\lambda'_i)^2$ where $\lambda'=(\lambda'_1\ge\ldots\ge\lambda'_i\ge\ldots)$ is the conjugate partition and $\vert\lambda\vert\coloneqq\sum_i\lambda_i$. Note that we expand the denominators in positive powers of~$z$.

In Section \ref{realsection3}, we will define inverse bijections
\[\Exp\colon \overline{\Mot}^{\lambda-ring}(k)[[z,w]]^{+}\to(1+\overline{\Mot}^{\lambda-ring}(k)[[z,w]]^{+})^{\times}\]
\[\Log\colon (1+\overline{\Mot}^{\lambda-ring}(k)[[z,w]]^{+})^{\times}\to\overline{\Mot}^{\lambda-ring}(k)[[z,w]]^{+},\]
where $\overline{\Mot}^{\lambda-ring}(k)[[z,w]]^{+}$ stands for the ideal of power series with vanishing constant term.

Set 
\[\mathbb{H}^{mot}_X\coloneqq(1-z^2)\Log\Omega^{mot}_X.\]
In characteristic zero, the following theorem is a particular case of \cite[Theorem~1.13.1]{FSS3} when $\delta=0,D=\emptyset$ and $\varepsilon=0$. Using our formulas in Theorem \ref{higgsformulas}, we will show that the theorem is valid in arbitrary characteristic.  It gives a formula for the motivic classes of
moduli stacks of semistable Higgs bundles in $\overline{\Mot}^{\lambda-ring}(k)$.
\begin{Th}[Theorem \ref{degenerate1}]\label{degenerate}
Let $d/r=d_0/r_0$ where $r_0$ and $d_0$ are coprime integers with $r_0>0$. The image of the motivic class $[\mathcal{H}iggs_{r,d}^{ss}]$ in $\overline{\Mot}^{\lambda-ring}(k)$ is equal to the coefficient at~$w^r$ in
\[
\mathbb{L}^{(g-1)r^2}\Exp\left(\mathbb{L}\left(\mathbb{H}^{mot}_X\Big\vert_{z=1}\right)_{[r_0]}\right),
\]
where $A(w)_{[r_0]}$ stands for the sum of the monomials whose exponents are multiples of~$r_0$.
\end{Th}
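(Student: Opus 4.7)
The plan is to deduce Theorem \ref{degenerate} from Theorem \ref{higgsformulas} by performing the same $\lambda$-ring manipulations that appeared in \cite[Theorem 1.13.1]{FSS3} in the characteristic zero case. The point is that Theorem \ref{higgsformulas} places us on exactly the same footing as in characteristic zero: once the identity $[\mathcal{H}iggs_{r,d}^{ss}(X)]=H_{r,d+er}$ is available in $\overline{\Mot}(k)$ for $e$ sufficiently large, and since the classes $H_{r,d}$ are given by the same explicit Hua-type combinatorial formulas from Sections \ref{section44}-\ref{section45}, the remaining steps become purely algebraic identities in a $\lambda$-ring and transfer verbatim to arbitrary characteristic.

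Concretely, I would first push the identity of Theorem \ref{higgsformulas} to its image in $\overline{\Mot}^{\lambda-ring}(k)$ and assemble the generating series $\sum_{r\ge 1}[\mathcal{H}iggs_{r,d}^{ss}]w^r$ along the fixed slope ray $d/r=d_0/r_0$. Using the definition of the classes $H_{r,d}$ in terms of $\Omega_X^{mot}$ from \eqref{omegamot}, together with the Harder--Narasimhan recursion which extracts the semistable locus from the full moduli stack, this generating series can be rewritten in closed form. The restriction to monomials whose $w$-exponent lies in $r_0\mathbb{Z}$ reflects the slope-stability condition, while the prefactor $\mathbb{L}^{(g-1)r^2}$ records the contribution of $\mathbb{L}^{g\langle\lambda,\lambda\rangle}$ along the slope ray together with the normalization coming from the automorphisms of a line bundle.

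The second, more substantive ingredient is Mellit's theorem \cite{Mel1,Mel2}, which underwrites the plethystic simplification $(1-z^2)\Log\Omega_X^{mot}=\mathbb{H}_X^{mot}$ and in particular guarantees that $\mathbb{H}_X^{mot}$ is regular at $z=1$. Theorem \ref{degenerate} then follows by applying the identity $\Exp\circ\Log=\Id$ inside $\overline{\Mot}^{\lambda-ring}(k)[[z,w]]$, specializing $z=1$, extracting the subseries supported on $r_0\mathbb{Z}$, and reading off the coefficient of $w^r$. The outer $\Exp(\mathbb{L}\,\cdot)$ on the right-hand side is precisely the plethystic repackaging of the Hua formula.

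The main obstacle is controlling the specialization at $z=1$. Individual summands in $\Omega_X^{mot}$ have poles at $z=1$, so the factor $(1-z^2)$ must be combined with $\Log$ \emph{before} setting $z=1$. Verifying that this specialization produces a well-defined element of $\overline{\Mot}^{\lambda-ring}(k)[[w]]$ lying in the appropriate completed filtration, so that $\Exp$ of it converges, is the delicate step. Since this regularity is a purely algebraic/combinatorial assertion about $\Omega_X^{mot}$, Mellit's argument in \cite{Mel1,Mel2} applies without change, and the proof ultimately reduces to the $\lambda$-ring calculation of \cite{FSS3} fed by the characteristic-free input of Theorem \ref{higgsformulas}.
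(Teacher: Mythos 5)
There is a genuine gap in your argument: you conflate the two different generating functions $\Omega_X^{Sch,mot}$ and $\Omega_X^{mot}$. The classes $H_{r,d}$ from Sections~\ref{section44}--\ref{section45} are defined in terms of \emph{Schiffmann's} generating function $\Omega_X^{Sch,mot}$ of \eqref{schiff} via equations \eqref{Brd} and \eqref{Hrd}, not in terms of $\Omega_X^{mot}$ of \eqref{omegamot} as you assert. These two series are a priori quite different objects: $\Omega_X^{Sch,mot}$ is built from the iterated-residue expressions $J^{mot}_\lambda(z)H^{mot}_\lambda(z)$, while $\Omega_X^{mot}$ is the much simpler closed product over boxes. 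Consequently you never actually face, let alone address, the crucial step of passing from the Schiffmann-style series to the Mellit-style one.

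This also means you misplace where Mellit's results enter. The identity $\mathbb{H}^{mot}_X=(1-z^2)\Log\Omega^{mot}_X$ that you credit to Mellit is just the \emph{definition} of $\mathbb{H}^{mot}_X$; it is not a theorem and carries no content. The substantive Mellit-type input is Proposition~\ref{twoh} (a no-puncture instance of \cite[Corollary~8.7.4]{FSS3}), which says that after specializing $z=1$ the two plethystic logarithms agree:
\[
\mathbb{H}^{Sch,mot}_X\Big\vert_{z=1}=\mathbb{H}^{mot}_X\Big\vert_{z=1},
\]
where $\mathbb{H}^{Sch,mot}_X$ is the image of $(1-z)\Log\Omega^{Sch,mot}_X$ in $\overline{\Mot}^{\lambda-ring}(k)[[z,w]]$. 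The paper's proof starts from Theorem~\ref{higgsformula} exactly as you do, expresses $[\mathcal{H}iggs^{ss}_{r,d}]$ as a coefficient of $\mathbb{L}^{(g-1)r^2}\Exp\bigl(\mathbb{L}\,\mathbb{H}^{Sch,mot}_X/(1-z)\bigr)$ in $\overline{\Mot}^{\lambda-ring}(k)$, and only then invokes Proposition~\ref{twoh} together with the argument of \cite[Lemma~9.2.3]{FSS3} to replace $\mathbb{H}^{Sch,mot}_X$ by $\mathbb{H}^{mot}_X$ at $z=1$. Without an explicit appeal to this bridge your proof never obtains the right-hand side as stated, and the appeal to Mellit as ``regularity at $z=1$'' does not substitute for it. The rest of your outline (map to the $\lambda$-ring quotient, extract the $r_0\mathbb{Z}$-subseries, read off the coefficient of $w^r$) is consistent with the paper once this missing step is added.
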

Using the theorem above, we can derive the counting results of Schiffmann and Mozgovoy (\cite{Sch,MS1}), see Theorem \ref{finitefieldhiggs}.
\subsection{Organization of the article}The paper is organized as follows: in Section~\ref{section2}, we define and study the motivic classes of stacks in arbitrary characteristic; in Section~\ref{count}, we define the counting measure for the ring of motivic classes and use the \'etale realization to show that the counting measure only depends on the image of the motivic class in the completed $\lambda$-ring; in Section~\ref{realsection3}, we define the power structures on the rings of motivic classes of schemes and stacks; in Section \ref{section4}, we calculate the motivic classes of the stacks of semistable Higgs bundles; finally in Section \ref{section5}, we use Mellit's results \cite{Mel1,Mel2} to obtain a simpler formula for the motivic classes of stacks of Higgs bundles in the universal $\lambda$-ring quotient and to calculate their volumes in the case when the field is finite.
\subsection{Acknowledgements}
The author would like to thank his advisor Roman Fedorov for providing plenty of help
and encouragement throughout this project. The author is thankful to Adrian Vasiu for suggesting Theorem \ref{perfectfield}. The author thanks Sergey Mozgovoy, Markus Reineke, Evgeny Shinder, Rahul Singh and Minghao Zhao for valuable discussions. The author is partially supported
by NSF grants DMS-2001516 and DMS-2402553.
\section{Motivic classes in finite characteristic}\label{section2}
\subsection{Definitions of the rings of motivic classes of schemes and stacks}\label{section21}
In this section, we give basic definitions and results about motivic classes of schemes and stacks. Our definition is new in finite characteristic. We recall that the motivic classes of varieties have been studied in \cite{BM,Got,Kap,LL1,LL,Mus}; the motivic classes of stacks have been studied in \cite{BD,BM,Eke,FSS1,Joy,Toe}; the motivic functions have been studied in \cite{FSS1,KS1}.

Recall the definition of $\Mot(k)$, see Definition \ref{motstack}.
\begin{Def}\label{motstacks}
Let $k$ be a field. The abelian group $\Mot(k)$ is the group generated by isomorphism classes of $k$-stacks modulo the following relations:
\begin{enumerate}[(i)]
\item $[\mathcal{X}]=[\mathcal{Y}]+[\mathcal{X}-\mathcal{Y}]$ where $\mathcal{Y}$ is a closed substack of $\mathcal{X}$.
\item $[\mathcal{X}]=[\mathcal{Y}]$ if there is a surjective and universally injective morphism $\mathcal{X}\to\mathcal{Y}$ of stacks over~$k$.
\item $[\mathcal{X}]=[\mathcal{Y}\times_k\mathbb{A}^r_k]$ where $\mathcal{X}\to\mathcal{Y}$ is a vector bundle of rank $r$.
\end{enumerate}
The class $[\mathcal{X}]$ in $\Mot(k)$ is called the \emph{motivic class} of the stack $\mathcal{X}$. The class of~$\mathbb{A}^1_k$ in $\Mot(k)$ is denoted by $\mathbb{L}$. We can give a ring structure for $\Mot(k)$ by $[\mathcal{X}]\cdot[\mathcal{Y}]\coloneqq[\mathcal{X}\times_k\mathcal{Y}]$.
\end{Def}
\begin{Rems}\label{remark1}
\begin{enumerate}[(i)]
\item We need the affine stabilizer condition in Convention \ref{Stabilizer} since by \cite[Lemma 3.5.1, Propositions 3.5.6 and 3.5.9]{Kre}, every such stack has a stratification by global quotients of the form $X/\GL(n)$ with $X$ a scheme. This will be used in the proof of Theorem \ref{motequal}.
\item Note that $[\emptyset]=0$. Note also that for any stack $\mathcal{X}$ we have $[\mathcal{X}_{red}]=[\mathcal{X}]$. Indeed, $\mathcal{X}-\mathcal{X}_{red}=\emptyset$ and we can use the first relation of Definition \ref{motstacks}.
\item We will explain why we need to add the second relation in Lemma \ref{G-invariant}.
\item With the second relation of Definition \ref{motstacks}, we can replace the first relation with apparently a weaker one $[\mathcal{X}]=\sum_{i=1}^n[\mathcal{X}_i]$ if $\mathcal{X}$ is the disjoint union of substacks~$\mathcal{X}_i$.
\item The second relation is non-trivial by the following example. Fix a perfect field~$k$ where $p\coloneqq\text{char}(k)\neq 0,2$. For an elliptic curve $E$ over $k$, denote by $E^{(p)}$ the Frobenius twist of $E$. The relative Frobenius $E\to E^{(p)}$ is surjective and by \cite[Tag0CCB]{SP}, it is universally injective, so we have
\[[E^{(p)}]=[E].\]
Recall the $j$-invariant of $E$ from \cite[Chapter 4, Section~4]{Har}. For any elliptic curve $E$ given in Legendre form as $y^2=x(x-1)(x-\lambda)$, $E^{(p)}$ is of Legendre form $y^2=x(x-1)(x-\lambda^p)$, thus
\[j(E^{(p)})=2^8\frac{\left(\left(\lambda^p\right)^2-\lambda^p+1\right)^3}{\left(\lambda^p\right)^2(\lambda^p-1)^2}=(j(E))^p.
\]
By \cite[Chapter 4, Theorem 4.1(b,c)]{Har}, in general $E^{(p)}\not\simeq E$ unless $k=\mathbb{Z}/p\mathbb{Z}$.
\item For the motivic classes of schemes defined in Definition \ref{motscheme} below, the vector bundle relation is trivial since every vector bundle over a scheme is locally trivial in the Zariski topology. However in the case of stacks, the universal vector bundle \[V\coloneqq\mathbb{A}^n_k\times^{\GL(n)}\{pt\}\to\BGL(n)\]
is not Zariski locally trivial.
\item When $k$ has characteristic zero, the ``surjective and universally injective relation'' is not needed: without this relation, one obtains an isomorphic ring of motivic classes. This follows by combining Remark \ref{rem24}(iii) and Theorem \ref{motequal} below with an isomorphism in \cite[Section 2.5]{FSS1}.
\end{enumerate}
\end{Rems}
The following definition for the ring of motivic classes of schemes is similar to the definition of $\widetilde{K}_0(\text{Var}/k)$ in \cite[Section 7.2]{Mus}.
\begin{Def}\label{motscheme}
For any field $k$, the abelian group $\Mot_{sch}(k)$ is the group generated by isomorphism classes of schemes of finite type over $k$ modulo the following relations:
\begin{enumerate}[(i)]
\item $[X]=[Y]+[X-Y]$ where $Y$ is a closed subscheme of $X$.
\item $[X]=[Y]$ if there is a surjective and universally injective morphism $X\to Y$ of schemes over $k$.
\end{enumerate}
The class $[X]$ in $\Mot_{sch}(k)$ is called the \emph{motivic class} of the scheme $X$. The class of~$\mathbb{A}^1_k$ in $\Mot_{sch}(k)$ is also denoted by $\mathbb{L}$ by abuse of notation. Similarly, we can give a ring structure for $\Mot_{sch}(k)$ by $[X]\cdot[Y]\coloneqq[X\times_k Y]$. 
\end{Def}
\begin{Rems}\label{rem24}
\begin{enumerate}[(i)]
\item In \cite[Section 7.2]{Mus}, Musta{\c t}{\u a} defines the ring of motivic classes of varieties (i.e., reduced schemes of finite type) $\widetilde{K}_0(\text{Var}/k)$ similarly to Definition \ref{motscheme} with the only difference that the multiplication is given by $[X]\cdot[Y]\coloneqq[(X\times_k Y)_{red}]$ (the reason for this modification is that the product of two reduced schemes is not always reduced). This ring is, however, isomorphic to $\Mot_{sch}(k)$ because $[X]=[X_{red}]$ for a scheme $X$.
\item Musta{\c t}{\u a} also defines the ring of motivic classes of quasi-projective varieties $\widetilde{K}^{qpr}_0(\text{Var}/k)$. This ring is also isomorphic to the previous one, see Lemma \ref{qpr}.
\item If $\text{char}(k)=0$, then the ring of motivic classes of schemes coincides with the Grothendieck ring of varieties (see \cite[Proposition 7.25]{Mus}). That is, the second relation of Definition \ref{motscheme} is automatically true because every surjective and universally injective morphism becomes an isomorphism after passing to locally closed stratifications (see \cite[Proposition A.24]{Mus} and \cite[Tag01S4]{SP}).
\end{enumerate}
\end{Rems}
\begin{Lemma}\label{Eke}
In $\Mot(k)$ we have the following results.
\begin{enumerate}[(i)]
\item $[\GL(n)]=(\mathbb{L}^n-1)(\mathbb{L}^n-\mathbb{L})\cdots(\mathbb{L}^n-\mathbb{L}^{n-1})$.
\item For a stack $\mathcal{X}$, we have $[\mathcal{X}/\GL(n)]=[\mathcal{X}]/[\GL(n)]$.
\item $[\BGL(n)]={1}/\left((\mathbb{L}^n-1)(\mathbb{L}^n-\mathbb{L})\cdots(\mathbb{L}^n-\mathbb{L}^{n-1})\right)$.
\end{enumerate}
\end{Lemma}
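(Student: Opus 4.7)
The plan is to treat the three parts in order: part (i) is a direct computation using the scissor and vector bundle relations, while parts (ii) and (iii) exploit the fact that $\GL(n)$ is a special group (in Serre's sense), so that $\GL(n)$-torsors are Zariski locally trivial.

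For part (i), I would identify $\GL(n)$ with the variety of $n$-tuples of linearly independent vectors in $\mathbb{A}^n$. Let $V_i$ be the open subvariety of $(\mathbb{A}^n)^i$ consisting of $i$-tuples of linearly independent vectors, so that $V_n \cong \GL(n)$ and $V_1 = \mathbb{A}^n \setminus \{0\}$ has class $\mathbb{L}^n - 1$. The trivial rank-$n$ vector bundle $V_i \times \mathbb{A}^n \to V_i$ decomposes as the open complement $V_{i+1}$ together with the closed rank-$i$ subbundle whose fiber over $(v_1,\dots,v_i)$ is the linear span $\langle v_1,\dots,v_i\rangle$. Applying the scissor relation of Definition~\ref{motstack}(i) and the vector bundle relation of Definition~\ref{motstack}(iii) gives $[V_i]\cdot\mathbb{L}^n = [V_{i+1}] + [V_i]\cdot\mathbb{L}^i$, hence $[V_{i+1}] = [V_i](\mathbb{L}^n - \mathbb{L}^i)$. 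Iterating from $i=1$ to $i=n-1$ yields the stated product.

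For part (ii), the morphism $\mathcal{X}\to\mathcal{X}/\GL(n)$ is a $\GL(n)$-torsor. Since $\GL(n)$ is a special group, every such torsor is Zariski locally trivial. Using that $\mathcal{X}/\GL(n)$ still satisfies the affine stabilizer assumption of Convention~\ref{Stabilizer}, Kresch's stratification result referenced in Remark~\ref{remark1}(i) provides a decomposition of $\mathcal{X}/\GL(n)$ into locally closed substacks $\mathcal{Y}_j$ over which the torsor trivializes. Thus $\mathcal{X}|_{\mathcal{Y}_j}\cong \mathcal{Y}_j\times\GL(n)$, and by the scissor relation together with Remark~\ref{remark1}(iv),
\[
[\mathcal{X}] \;=\; \sum_j [\mathcal{Y}_j\times\GL(n)] \;=\; \Bigl(\sum_j [\mathcal{Y}_j]\Bigr)\cdot[\GL(n)] \;=\; [\mathcal{X}/\GL(n)]\cdot[\GL(n)].
\]
Since $[\GL(n)]=\mathbb{L}^{\binom{n}{2}}\prod_{i=1}^{n}(\mathbb{L}^i-1)$ by part (i), and this element becomes invertible in $\overline{\Mot}(k)$ (each factor $\mathbb{L}^i-1 = \mathbb{L}^i(1-\mathbb{L}^{-i})$ is a unit modulo the filtration), the formula $[\mathcal{X}/\GL(n)] = [\mathcal{X}]/[\GL(n)]$ follows. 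Part (iii) is the immediate specialization $\mathcal{X}=\Spec k$, giving $[\BGL(n)] = 1/[\GL(n)]$ and substituting the value from (i).

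The main technical obstacle is part (ii): one must justify carefully that the $\GL(n)$-torsor $\mathcal{X}\to\mathcal{X}/\GL(n)$ admits trivializing locally closed strata in the Artin-stack setting, which combines the specialness of $\GL(n)$ with the Kresch stratification. The other two parts are then either a direct calculation or a specialization.
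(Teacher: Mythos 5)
Part (i) is correct: the $V_i$ tower with the scissor relation is a clean direct computation, and it only uses the ring structure since all the bundles there are actually trivial. The problem is in part (ii). You claim that Kresch's stratification of $\mathcal{X}/\GL(n)$ produces locally closed substacks $\mathcal{Y}_j$ over which the $\GL(n)$-torsor trivializes. That is not what Kresch's theorem says, and the stronger claim is false. Kresch produces strata of the shape $X_j/\GL(m_j)$, but a torsor over such a quotient stack need not be trivial, nor become trivial after any further locally closed stratification. Take $\mathcal{X}=\Spec k$, so $\mathcal{X}/\GL(n)=\BGL(n)$: the stack $\BGL(n)$ has no proper nonempty locally closed substacks, yet the torsor $\Spec k\to\BGL(n)$ is nontrivial. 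Specialness of $\GL(n)$ gives Zariski-local triviality over \emph{schemes}, but stacks do not have a useful Zariski topology — this is exactly why Definition~\ref{motstack} includes the vector bundle relation~(iii) as a separate axiom rather than deducing it from the scissor relation as one can for schemes (compare Remark~\ref{remark1}(vi)).

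The repair is a relative version of your argument for (i), which is essentially the content of Ekedahl's Proposition~1.1 that the paper cites for this step. Write $\mathcal{Y}=\mathcal{X}/\GL(n)$ and let $\mathcal{E}=\mathcal{X}\times^{\GL(n)}\mathbb{A}^n$ be the associated rank-$n$ bundle on $\mathcal{Y}$, so that $\mathcal{X}$ is its frame bundle. Let $\mathrm{Fr}_i(\mathcal{E})$ be the open substack of the $i$-fold fiber product of $\mathcal{E}$ over $\mathcal{Y}$ parametrizing fiberwise linearly independent $i$-tuples; then $\mathrm{Fr}_0(\mathcal{E})=\mathcal{Y}$ and $\mathrm{Fr}_n(\mathcal{E})=\mathcal{X}$. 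The pullback of $\mathcal{E}$ to $\mathrm{Fr}_i(\mathcal{E})$ is a rank-$n$ vector bundle, so by relation~(iii) its class is $\mathbb{L}^n[\mathrm{Fr}_i(\mathcal{E})]$; it decomposes as the open $\mathrm{Fr}_{i+1}(\mathcal{E})$ and the closed tautological span subbundle, which is trivial of rank $i$ because the tautological sections trivialize it. The scissor relation gives $[\mathrm{Fr}_{i+1}(\mathcal{E})]=(\mathbb{L}^n-\mathbb{L}^i)[\mathrm{Fr}_i(\mathcal{E})]$, and iterating yields $[\mathcal{X}]=[\GL(n)]\,[\mathcal{Y}]$; here the vector bundle axiom does the work that Zariski-local triviality did in part~(i). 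One further wrinkle: you justify the division by $[\GL(n)]$ by passing to $\overline{\Mot}(k)$, but the lemma is stated in $\Mot(k)$. The invertibility should instead be read off from the case $\mathcal{X}=\Spec k$ of the identity just proved, which gives $1=[\BGL(n)][\GL(n)]$ — this is precisely part (iii) — after which the division in (ii) makes sense already in $\Mot(k)$.
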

\begin{proof}
For (i) and (ii) see \cite[Proposition 1.1, i),ii)]{Eke}. Now (iii) follows from~(ii).
\end{proof}
The following theorem is similar to \cite[Th\'eor\`eme 3.10]{Toe} and \cite[Theorem~1.2]{Eke}. We give a proof for the sake of completeness.
\begin{Th}\label{motequal}
There is a natural ring isomorphism
\[\Mot_{sch}(k)[\mathbb{L}^{-1},(\mathbb{L}^{i}-1)^{-1}\vert i>0]\cong\Mot(k).\]
\end{Th}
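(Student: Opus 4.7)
The plan is to construct mutually inverse ring maps between the two sides. The natural assignment $[X]_{sch}\mapsto[X]$ defines a ring homomorphism $\phi\colon\Mot_{sch}(k)\to\Mot(k)$: relations (i) and (ii) of Definition \ref{motscheme} match the first two relations of Definition \ref{motstack}, and the vector bundle relation holds for schemes by Zariski-local triviality of vector bundles. To factor $\phi$ through the localization I verify that $\mathbb{L}$ and every $\mathbb{L}^i-1$ are units in $\Mot(k)$. From Lemma \ref{Eke}(iii) together with the expansion of $[\GL(n)]$ in Lemma \ref{Eke}(i) one has
\[
[\BGL(n)]\cdot\mathbb{L}^{n(n-1)/2}\prod_{j=1}^{n}(\mathbb{L}^j-1)=1,
\]
from which one reads off explicit two-sided inverses in $\Mot(k)$: namely $\mathbb{L}^{-1}=[\BGL(2)](\mathbb{L}-1)(\mathbb{L}^2-1)$, and $(\mathbb{L}^i-1)^{-1}=[\BGL(i)]\,\mathbb{L}^{i(i-1)/2}\prod_{j=1}^{i-1}(\mathbb{L}^j-1)$. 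This yields the desired extension $\bar\phi\colon\Mot_{sch}(k)[\mathbb{L}^{-1},(\mathbb{L}^i-1)^{-1}]\to\Mot(k)$.

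For the inverse map I exploit Remark \ref{remark1}(i): any stack $\mathcal{X}$ satisfying Convention \ref{Stabilizer} admits a finite stratification $\mathcal{X}=\sqcup_\alpha X_\alpha/\GL(n_\alpha)$ into global quotients of schemes. I define
\[
\psi([\mathcal{X}])\coloneqq\sum_\alpha\frac{[X_\alpha]}{[\GL(n_\alpha)]}\in\Mot_{sch}(k)[\mathbb{L}^{-1},(\mathbb{L}^i-1)^{-1}],
\]
where $[\GL(n_\alpha)]=\mathbb{L}^{n_\alpha(n_\alpha-1)/2}\prod_{j=1}^{n_\alpha}(\mathbb{L}^j-1)$ is a unit in the localization. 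Once $\psi$ is shown to be a well-defined ring homomorphism, the two compositions are routine: $\psi\circ\bar\phi=\mathrm{id}$ follows by treating a scheme $X$ as the single stratum $X/\GL(0)$ and tracing the explicit inverses above, while $\bar\phi\circ\psi=\mathrm{id}$ follows stratum by stratum from Lemma \ref{Eke}(ii), which identifies $[X_\alpha]\cdot[\BGL(n_\alpha)]=[X_\alpha/\GL(n_\alpha)]$.

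The bulk of the proof is therefore the well-definedness of $\psi$. Independence of the chosen stratification reduces via a common refinement to the case of a refinement, where the scissors relation in $\Mot_{sch}(k)$ combined with Lemma \ref{Eke}(ii) handles the splitting of each coarser stratum. Compatibility with the scissors relation of Definition \ref{motstack} follows immediately, while compatibility with the vector bundle relation is checked by stratifying the base $\mathcal{Y}=\sqcup Y_\alpha/\GL(n_\alpha)$, pulling a rank-$r$ bundle $\mathcal{E}\to\mathcal{Y}$ back to the scheme atlases $Y_\alpha$ to get a $\GL(n_\alpha)$-equivariant vector bundle $E_\alpha\to Y_\alpha$, and applying Zariski-local triviality on each $Y_\alpha$ so that $[E_\alpha]=[Y_\alpha]\mathbb{L}^r$ in $\Mot_{sch}(k)$. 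The main obstacle, and the most delicate point, is relation (ii): given a surjective universally injective morphism $\mathcal{X}\to\mathcal{Y}$ of stacks, I need stratifications of source and target whose matched pieces have the form $X_\alpha/\GL(n_\alpha)\to Y_\alpha/\GL(n_\alpha)$ and such that the induced morphism of scheme atlases $X_\alpha\to Y_\alpha$ is still surjective and universally injective. This reduction uses noetherian induction on dimension together with the stability of radicial morphisms under base change (cf.\ \cite[Tag0CCB]{SP}), and it is this step where the ``surjective and universally injective'' relation genuinely differs from the characteristic zero setting.
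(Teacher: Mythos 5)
Your overall strategy — map schemes into $\Mot(k)$, show $\mathbb{L}$ and $\mathbb{L}^i-1$ become units, then build the inverse by summing $[X_\alpha]/[\GL(n_\alpha)]$ over a stratification into global quotients — is exactly the paper's. Two of your steps, however, paper over real gaps.

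First, and this is the substantive one: your plan for relation (ii) is to find stratifications of $\mathcal{X}$ and $\mathcal{Y}$ with ``matched pieces'' $X_\alpha/\GL(n_\alpha)\to Y_\alpha/\GL(n_\alpha)$ and then invoke Noetherian induction plus stability of radicial morphisms under base change. But there is no a priori reason such matched stratifications (with equal group sizes and a morphism of atlases) exist, and that is not what the paper produces. What the paper actually does: stratify $\mathcal{Y}$, base change the $\GL(n)$-torsor $Y\to Y/\GL(n)$ to get a surjective universally injective morphism $f\colon\mathcal{X}'\to Y$ to a scheme, and then invoke \cite[Tag04XB]{SP}: \emph{a universally injective morphism of stacks is representable by algebraic spaces}, so $\mathcal{X}'$ is an algebraic space, not merely a stack. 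This is the crucial ingredient your sketch omits. Only with $\mathcal{X}'$ an algebraic space can you apply \cite[Tag06NH]{SP} to find an open subscheme $Z\subset\mathcal{X}'$ around the generic point, shrink $Y$ to an open $U$ with $f^{-1}(U)\subset Z$ a scheme, and close the Noetherian induction on $Y-U$. Without the representability fact you have no handle on the source of the base-changed morphism, and the ``matched stratification'' you need cannot be manufactured.

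Second, a smaller slip: you justify independence of $\psi$ from the stratification by citing Lemma~\ref{Eke}(ii). But that lemma is an identity in $\Mot(k)$, whereas $\psi$ is valued in $\Mot_{sch}(k)[\mathbb{L}^{-1},(\mathbb{L}^i-1)^{-1}]$, so you cannot use it to compare two expressions there — that would be circular, since you are in the middle of constructing the map. The correct statement is the scheme-level identity proved in \cite[Lemma 2.3]{BD}, which is what the paper cites at this point.
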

\begin{proof}
\emph{Step 1.} We first construct a homomorphism from the left hand side to the right hand side. We have an obvious homomorphism from $\Mot_{sch}(k)$ to $\Mot(k)$. By Lemma \ref{Eke}(iii), the images of $\mathbb{L}$ and $\mathbb{L}^{i}-1$ for $i\ge 1$ are invertible in $\Mot(k)$. Now by the universal property of localization we obtain a ring homomorphism 
\[A\colon \Mot_{sch}(k)[\mathbb{L}^{-1},(\mathbb{L}^{i}-1)^{-1}\vert i>0]\to\Mot(k).\]
\emph{Step 2.} Next, we define a map $B$ assigning to a $k$-stack $\mathcal{X}$ an element  $B(\mathcal{X})\in\Mot_{sch}(k)[\mathbb{L}^{-1},(\mathbb{L}^{i}-1)^{-1}\vert i>0]$ as follows: according to Convention \ref{Stabilizer} and \cite[Lemma 3.5.1, Propositions 3.5.6 and 3.5.9]{Kre}, every stack $\mathcal{X}$  admits a stratification   $\mathcal{X}=\bigsqcup_i\left(X_i/\GL(n_i)\right)$ where $X_i$ are schemes. We use Lemma \ref{Eke}(i) and define $B$ by 
\[B([\mathcal{X}])=\sum_i\frac{[X_i]}{[\text{GL}(n_i)]}.\]
If the stack $\mathcal{X}$ has two stratifications by global quotients 
\[\mathcal{X}=\bigsqcup_iX_i/\GL(n_i)\text{ and }\mathcal{X}=\bigsqcup_jY_j/\GL(m_j),\] then
\[\sum_i[X_i]/[\GL(n_i)]=\sum_j[Y_j]/[\GL(m_j)]\in\Mot_{sch}(k)[\mathbb{L}^{-1},(\mathbb{L}^{i}-1)^{-1}\vert i>0],\]
which follows from the proof of \cite[Lemma 2.3]{BD}. Thus $B$ is well-defined.

\emph{Step 3.} We want to show that $B$ induces a group homomorphism 
\[\tilde{B}\colon \Mot(k)\to\Mot_{sch}(k)[\mathbb{L}^{-1},(\mathbb{L}^{i}-1)^{-1}\vert i>0],\]
by checking that it respects the relations of Definition \ref{motstacks}. Notice that the first relation is obviously preserved.

For the second relation, we need to show that if $\mathcal{X}\to\mathcal{Y}$ is surjective and universally injective, then $B(\mathcal{X})=B(\mathcal{Y})$. By \cite[Tag03MH and Tag03MW]{SP}, surjective and universally injective morphisms are stable under base change, so the relation reduces to the following statement: if $\mathcal{X}\to Y/\GL(n)$ is a surjective and universally injective morphism of stacks with $Y$ being an algebraic space, then
\[B(\mathcal{X})=B(Y/\text{GL}(n)).\]
We have the following Cartesian diagram where $\mathcal{X'}\to\mathcal{X}$ is a $\GL(n)$-torsor
\[
\begin{tikzcd}
\mathcal{X'}\arrow{d}\arrow{r}{\GL(n)}  & \mathcal{X}\arrow{d}\\
Y\arrow{r}&     Y/\GL(n).
\end{tikzcd}
\]
By Lemma \ref{Eke}(ii), we reduce to the case of a surjective and universally injective morphism to a scheme: \[f\colon \mathcal{X}'\to Y.\]
By \cite[Tag04XB]{SP}, a univerally injective morphism of stacks is representable by algebraic spaces. In particular, $\mathcal{X}'$ is an algebraic space.

By Noetherian induction, we can assume that $Y$ is integral. Let $\xi$ be the generic point of $Y$ and let $\eta$ be the unique generic point of $\mathcal{X}'$ mapping to $Y$. By \cite[Tag06NH]{SP}, there is an open neighborhood $Z\subset\mathcal{X}'$ of $\eta$ such that $Z$ is a scheme. Then $f(\mathcal{X}'-Z)$ is a constructible subset of $Y$ not containing $\xi$, so the closure of $f(\mathcal{X}'-Z)$ is a proper subset of $Y$. Let $U$ be the complement of this closure.

Note that since $f^{-1}(U)\subset Z$, we have 
\[f^{-1}(U)=\mathcal{X}'\times_YU=Z\times_YU,\]
which is a scheme. Also $f^{-1}(U)\to U$ is surjective and universally injective, thus $[f^{-1}(U)]=[U]$ and we are reduced to showing that 
\[[\mathcal{X}'-f^{-1}(U)]=[Y-U],\]
which follows by the induction on dimension.

For the third relation, it is enough to show that if $\mathcal{E}\to X/\GL(n)$ is a vector bundle of rank $r$, then $B(\mathcal{E})=\mathbb{L}^rB(X/\GL(n))$. We have the following Cartesian diagram
\[
\begin{tikzcd}
E\arrow{r}\arrow{d}  & X\arrow{d}\\
\mathcal{E}\arrow{r}{} &     X/\GL(n),
\end{tikzcd}
\]
where $\mathcal{E}\cong E/\GL(n)$ and $E$ is a scheme. Thus by Remark \ref{remark1}(vi)
\[B(\mathcal{E})=[E]/[\GL(n)]=\mathbb{L}^r[X]/[\GL(n)]=\mathbb{L}^rB(X/\GL(n)).\]
\emph{Step 4.} By Lemma \ref{Eke}(ii) 
\[A\circ \tilde{B}=\Id_{\Mot(k)}.\]
Now it suffices to show that $\tilde{B}$ is a surjection. It follows from Lemma \ref{Eke}(iii) that any finite product of elements of the form $\mathbb{L}^i-1$ and $\mathbb{L}$ divides $[\GL(n)]$, when $n$ is large enough. Thus, every element in the localized ring $\Mot_{sch}(k)[\mathbb{L}^{-1},(\mathbb{L}^{i}-1)^{-1}\vert i>0]$ can be written as $([X]-[Y])/[\GL(n)]$, where $X$ and $Y$ are schemes and $n$ is large enough. Since \[B((X-Y)\times_k\BGL(n))=([X]-[Y])/[\GL(n)],\]
we are done.
\end{proof}
\subsection{Dimensional completion}\label{sectioncomplete}
Following Behrend and Dhillon's results in \cite[Section~2.1]{BD}, we define the dimensional completion of $\Mot(k)$.
\begin{Def}\label{completedring}
For $m\ge0$, let $F^m\Mot(k)$ be the subgroup generated by the classes of stacks of dimension $\le -m$. This is a ring filtration and we define the completed ring $\overline{\Mot}(k)$ as the completion of $\Mot(k)$ with respect to this filtration.
\end{Def}
We note that calculations of motivic classes of various stacks associated with a curve are based on the ``motivic Harder formula'' proved in \cite[Section 4]{FSS1}, this formula is only valid in the completed ring of motivic classes.

The following definition is similar to \cite[Section 1.1]{FSS1}.
\begin{Def}\label{completed}
For $m\ge 0$, let $F^m\Mot_{sch}(k)[\mathbb{L}^{-1}]$ be the subgroup of $\Mot_{sch}(k)[\mathbb{L}^{-1}]$ generated by $[X]/\mathbb{L}^n$ with $\dim X-n\le -m$. We define the completed ring $\overline{\Mot}_{sch}(k)$ as the completion of the localization $\Mot_{sch}(k)[\mathbb{L}^{-1}]$ with respect to this filtration.
\end{Def}
\begin{Prop}\label{completioniso}
We have a natural isomorphism of topological rings
\[\overline{\Mot}(k)\xrightarrow{\sim}\overline{\Mot}_{sch}(k).\]
\end{Prop}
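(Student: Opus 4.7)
The plan is to realize the desired isomorphism as the completion of the ring isomorphism of Theorem \ref{motequal}, which identifies $\Mot(k)$ with $\Mot_{sch}(k)[\mathbb{L}^{-1},(\mathbb{L}^i-1)^{-1}\vert i>0]$. The crucial observation is that in $\overline{\Mot}_{sch}(k)$ each element $\mathbb{L}^i-1$ is already invertible: indeed $\mathbb{L}^{-i}\in F^i\Mot_{sch}(k)[\mathbb{L}^{-1}]$, so the geometric series $\sum_{n\ge 1}\mathbb{L}^{-ni}$ converges and equals $(\mathbb{L}^i-1)^{-1}$. Thus the extra localization step appearing in Theorem \ref{motequal} is absorbed into the completion, and the two completions should turn out to be canonically isomorphic.

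In one direction, I would first observe that the natural ring map $\Mot_{sch}(k)[\mathbb{L}^{-1}]\to\Mot(k)$ is continuous with respect to the two filtrations: a generator $[X]/\mathbb{L}^n$ of $F^m\Mot_{sch}(k)[\mathbb{L}^{-1}]$ satisfies $\dim X-n\le -m$ and thus represents a stack of dimension $\le -m$, hence lies in $F^m\Mot(k)$. Composing with $\Mot(k)\to\overline{\Mot}(k)$ and passing to completions yields a continuous ring homomorphism $\overline{\Mot}_{sch}(k)\to\overline{\Mot}(k)$.

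For the inverse direction, the universal property of localization together with the invertibility noted above produces a ring map $\Mot_{sch}(k)[\mathbb{L}^{-1},(\mathbb{L}^i-1)^{-1}\vert i>0]\to\overline{\Mot}_{sch}(k)$, which via Theorem \ref{motequal} is the same as a map $\Mot(k)\to\overline{\Mot}_{sch}(k)$. To extend it to the completion, I would invoke Convention \ref{Stabilizer} and \cite{Kre} to stratify a stack $\mathcal{X}$ with $\dim\mathcal{X}\le -m$ as $\bigsqcup_i X_i/\GL(n_i)$ with $\dim X_i-n_i^2\le -m$, and then use Lemma \ref{Eke}(i) and the factorization $[\GL(n_i)]=\mathbb{L}^{\binom{n_i}{2}}\prod_{j=1}^{n_i}(\mathbb{L}^j-1)$ to show each $[X_i]/[\GL(n_i)]$ lies in $F^m\overline{\Mot}_{sch}(k)$. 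The only real calculation is that the leading filtration level of $\prod_{j=1}^{n_i}(\mathbb{L}^j-1)^{-1}$ is $-(1+2+\cdots+n_i)$, which combined with the prefactor $\mathbb{L}^{-\binom{n_i}{2}}$ contributes exactly $-n_i^2$, matching the dimension count for $X_i/\GL(n_i)$.

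Once both maps are constructed at the level of completions, they must be mutually inverse because they agree with the isomorphism of Theorem \ref{motequal} on the dense subring $\Mot_{sch}(k)[\mathbb{L}^{-1}]$, giving an isomorphism of topological rings. The main obstacle is thus the continuity of the inverse direction, which reduces to the filtration bookkeeping with the global quotient stratification and the explicit form of $[\GL(n_i)]$.
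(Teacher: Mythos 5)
Your proposal is correct and takes essentially the same route as the paper: both directions are built from the maps $\Mot_{sch}(k)[\mathbb{L}^{-1}]\to\Mot(k)$ and $\Mot(k)\to\overline{\Mot}_{sch}(k)$ (the latter via Theorem \ref{motequal} and the invertibility of $\mathbb{L}^i-1$ in the completion), then extended by continuity and checked to be mutually inverse on dense subrings. Your explicit filtration bookkeeping using the Kresch stratification and $[\GL(n)]=\mathbb{L}^{\binom{n}{2}}\prod_{j=1}^{n}(\mathbb{L}^j-1)$ fills in a continuity check that the paper merely asserts, but it is the same argument.
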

\begin{proof}
Note that $\mathbb{L}$ and $\mathbb{L}^i-1$ are invertible in $\overline{\Mot}_{sch}(k)$ (with the inverse being $\sum_{n>0}\mathbb{L}^{-in}$), so using Theorem \ref{motequal} we obtain a homomorphism $\Mot(k)\to\overline{\Mot}_{sch}(k)$. This homomorphism is continuous because it takes $F^m\Mot(k)$ to $F^m\Mot_{sch}(k)[\mathbb{L}^{-1}]$. Next, we extend this homomorphism by continuity to a homomorphism
\[\overline{\Mot}(k)\to\overline{\Mot}_{sch}(k).\]
Let us construct the inverse homomorphism. Since $\mathbb{L}$ is invertible in $\Mot(k)$, the ring homomorphism $\Mot_{sch}(k)\to\Mot(k)$ extends to $\Mot_{sch}(k)[\mathbb{L}^{-1}]\to\Mot(k)$.

Since this homomorphism takes $F^m\Mot_{sch}(k)[\mathbb{L}^{-1}]$ to $F^m\Mot(k)$, it extends by continuity to a continous homomorphism
\[\overline{\Mot}_{sch}(k)\to\overline{\Mot}(k).\]
It is easy to check that the two homomorphisms are inverse to each other (note that it is enought to check on dense open subsets of the topological rings).

For the motivic classes over a field of characteristic $0$ see also \cite[Section~2.5]{FSS1}.
\end{proof}
\subsection{Motivic zeta-function and pre-$\lambda$-ring structure on $\overline{\Mot}(k)$}\label{section22}
The main goal of this section is to define the pre-$\lambda$-ring structure on $\overline{\Mot}(k)$ and the main result is Theorem \ref{prestructure}. We recall the definitions of quotient by finite groups following \cite[Section A.1]{Mus} and motivic zeta-function from \cite[(1.3)]{Kap}. Then we recall the definition of pre-$\lambda$-ring structure, see \cite[Chapter I]{Knu}. In this paper we only consider pre-$\lambda$-ring structures on commutative rings with units.

First we follow \cite[Section A.1]{Mus} to recall the definition of quotients by finite groups.

Let $X$ be a scheme of finite type over a field $k$ and let $G$ be an abstract finite group, acting on $X$ by algebraic automorphisms over $k$ (we make the convention that when $X=\Spec A$, the group $G$ acts on the ring $A$ on the left as $g\cdot a$, so it acts on $\Spec A$ on the right respectively). We denote by $\sigma_g$ the automorphism of $X$ corresponding to $g\in G$. 
\begin{Def}A quotient of $X$ by $G$ is a morphism $\pi\colon X\to W$ with the following two properties:
\begin{enumerate}[(i)]
\item $\pi$ is $G$-invariant, that is $\pi\circ\sigma_g=\pi$ for every $g\in G$.
\item $\pi$ is universal with this property: for every scheme $Z$ over $k$, and every $G$-invariant morphism $f\colon X\to Z$, there is a unique morphism $h\colon W\to Z$ such that $h\circ\pi=f$.
\end{enumerate}
It follows from this universal property that if a quotient exists, then it is unique up to isomorphism. In this case, we write $W=X/G$.
\end{Def}
\begin{Lemma}\label{quotient}
If $X$ is a quasi-projective scheme, then the quotient $X/G$ exists.
\end{Lemma}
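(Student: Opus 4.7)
The plan is the classical construction: build the quotient locally on a $G$-invariant affine open cover, and glue. The argument splits into three steps, and the quasi-projectivity of $X$ enters only in the second step.

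First I would handle the affine case. If $X=\Spec A$, I would take $X/G:=\Spec A^G$ with $\pi\colon X\to X/G$ induced by the inclusion $A^G\hookrightarrow A$. Since $G$ is finite, $A$ is integral (hence finite) over $A^G$, and $\pi$ is $G$-invariant. The universal property follows from the adjunction between $G$-equivariant ring maps $A\to B$ (with trivial $G$-action on $B$) and ring maps $A^G\to B$: given $f\colon X\to Z$ that is $G$-invariant, cover $Z$ by affines, pull back to $G$-stable affines in $X$, and apply the affine bijection; uniqueness then glues.

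Second, I would produce a $G$-invariant affine open cover of a general quasi-projective $X$. For any $x\in X$, the orbit $Gx$ is a finite set, so by the characterization of quasi-projective schemes (any finite set of points is contained in an affine open) there exists an affine open $U\subset X$ with $Gx\subset U$. Set $V:=\bigcap_{g\in G} g(U)$: this is a $G$-stable open containing $x$. Since any quasi-projective $k$-scheme is separated, $V$ is a finite intersection of affine opens in a separated scheme, hence affine. Doing this at every point produces the desired cover $\{V_i=\Spec A_i\}$, each carrying a $G$-action.

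Third, I would glue. Set $W_i:=\Spec A_i^G$. To glue these I need compatibility on overlaps. Given a $G$-stable open $V'\subset V=\Spec A$ and $x'\in V'$, pick $f\in A$ with $x'\in D(f)\subset V'$ and replace $f$ by $f':=\prod_{g\in G} g(f)\in A^G$; then $D(f')$ is a $G$-stable basic open neighborhood of $x'$ inside $V'$, and since localization commutes with $G$-invariants for finite $G$ we have $(A_{f'})^G=(A^G)_{f'}$. This shows $V'/G$ exists and is canonically an open subscheme of $\Spec A^G$. Applying this to $V_i\cap V_j$ inside both $V_i$ and $V_j$ produces canonical open immersions of $(V_i\cap V_j)/G$ into $W_i$ and $W_j$ satisfying the cocycle condition, so the $W_i$ glue to a scheme $W$ with a natural $G$-invariant morphism $\pi\colon X\to W$. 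The universal property on $W$ follows from the affine case applied to each $V_i$ and the uniqueness of the resulting factorizations on overlaps.

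The main obstacle is the gluing step, specifically verifying that the local affine quotients are compatible on overlaps; this is exactly where the finiteness of $G$ (ensuring integrality $A/A^G$ and compatibility of invariants with localization along invariant elements) combines with the existence of $G$-invariant basic opens to produce canonical open immersions between the local $\Spec A_i^G$'s. All other parts are either formal (gluing) or reduce directly to the classical affine quotient theory.
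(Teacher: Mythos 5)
Your construction follows exactly the classical route (affine quotient by invariants, a $G$-stable affine cover from quasi-projectivity, gluing along $G$-invariant basic opens), and this is precisely the argument in the paper's reference, Section~A.1 of Musta{\c t}{\u a}'s notes. There is, however, one imprecision in the gluing step that should be repaired. You choose $f\in A$ with only $x'\in D(f)\subset V'$ and set $f'=\prod_{g\in G} g(f)$; but $D(f')=\bigcap_{g\in G} D(g\cdot f)$, and $x'\in D(g\cdot f)$ holds if and only if the corresponding orbit point of $x'$ lies in $D(f)$. So you actually need the entire finite orbit $Gx'$ to be contained in $D(f)$, which your choice of $f$ does not ensure. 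The fix is immediate: $V'$ is $G$-stable, so $Gx'\subset V'$ is a finite set of points, and prime avoidance against the ideal cutting out the closed complement of $V'$ in $\Spec A$ produces $f$ with $Gx'\subset D(f)\subset V'$; then $x'\in D(f')$ as desired. With this corrected choice of $f$, the rest of your argument --- the identity $(A_{f'})^G=(A^G)_{f'}$ for $f'\in A^G$ and the cocycle verification from uniqueness of quotients --- is sound.
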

\begin{proof}
See \cite[Section A.1]{Mus}, especially the discussions of affine open covers after \cite[Corollary~A.3]{Mus}.
\end{proof}
We have the following lemma.
\begin{Lemma}\label{G-invariant}
Let $X$ be a quasi-projective scheme over $k$, and $G$ an abstract finite
group acting on $X$ by algebraic automorphisms over $k$. Let $X=\bigsqcup_i X_i$ be a stratification with $G$-invariant locally closed subschemes~$X_i$. We have in $\Mot_{sch}(k)$
\[[X/G]=\sum_i[X_i/G].
\]
\end{Lemma}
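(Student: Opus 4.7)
The plan is to stratify $X/G$ compatibly with the stratification of $X$, apply relation~(i) of Definition~\ref{motscheme} to the induced stratification of $X/G$, and then identify each piece with the corresponding $X_i/G$ by means of relation~(ii).

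Let $\pi\colon X\to X/G$ denote the quotient, which is finite (hence closed and surjective) since $G$ is finite and $X$ is quasi-projective. Because each $X_i$ is $G$-invariant, every $G$-orbit meeting $X_i$ lies inside $X_i$, so $\pi^{-1}(\pi(X_i))=X_i$; this makes the images $Y_i:=\pi(X_i)\subset X/G$ pairwise disjoint with union $X/G$. Taking the closure $\overline{X_i}$ (still $G$-invariant, as $G$ acts by automorphisms), one has $Y_i=\pi(\overline{X_i})\setminus\pi(\overline{X_i}-X_i)$, a difference of two closed subsets of $X/G$, so $Y_i$ is locally closed. Iterating relation~(i) of Definition~\ref{motscheme} (see Remark~\ref{remark1}(iv)) then yields
\[
[X/G]=\sum_i[Y_i]\quad\text{in }\Mot_{sch}(k).
\]

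Next, the universal property of the quotient $X_i/G$ (which exists by Lemma~\ref{quotient}) produces a canonical morphism $\phi_i\colon X_i/G\to Y_i$ factoring the $G$-invariant restriction $\pi|_{X_i}$. The morphism $\phi_i$ is finite and surjective, and on geometric points it is a bijection: a $\bar K$-point of either $X_i/G$ or $Y_i$ is by construction a $G$-orbit of $\bar K$-points of $X_i$, and $\phi_i$ sends an orbit to itself.

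The main obstacle is to upgrade this bijection to universal injectivity, so that relation~(ii) applies and yields $[Y_i]=[X_i/G]$. Affine-locally, writing $X_i=\Spec(B/I)$ with $I\subset B$ a $G$-invariant ideal and $X/G=\Spec B^G$, the stratum $Y_i$ is $\Spec B^G/I^G$ (with $I^G=I\cap B^G$), and $\phi_i$ is induced by the natural inclusion $B^G/I^G\hookrightarrow(B/I)^G$; this inclusion fails to be an equality in positive characteristic, which is precisely why relation~(ii) had to be added to Definition~\ref{motscheme}, see Remark~\ref{remark1}(iii). However, $\phi_i$ is finite and bijective on geometric points, so each of its geometric fibres is the spectrum of an artinian local $\bar K$-algebra with residue field $\bar K$; by a standard base-change argument this forces all residue field extensions of $\phi_i$ to be purely inseparable, and hence $\phi_i$ is universally injective. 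Relation~(ii) then gives $[Y_i]=[X_i/G]$ for each $i$, and summing over $i$ completes the proof.
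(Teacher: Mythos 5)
Your proof is correct and follows the same structure as the paper's: both decompose $X/G$ into the images $\pi(X_i)$, apply the scissor relation to this stratification, and then identify $[\pi(X_i)]$ with $[X_i/G]$ via a surjective, universally injective morphism $X_i/G \to \pi(X_i)$. The only difference is that the paper cites \cite[Proposition A.25]{Mus} and \cite[Tag01S4]{SP} for that last identification, whereas you prove it directly by observing that a finite surjective morphism bijective on geometric points has artinian local geometric fibres with trivial residue extension, hence purely inseparable residue extensions, hence is universally injective.
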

\begin{proof}
Let $X=\bigsqcup_i X_i$ be a stratification of $X$ with $G$-invariant locally closed subschemes~$X_i$. Let $\pi\colon X\to X/G$
be the quotient morphism. We have
\[X/G=\bigsqcup_i\pi(X_i).\]
For any $X_i$, there is a natural surjective and universally injective morphism $X_i/G\to\pi(X_i)$ by \cite[Proposition A.25]{Mus} and \cite[Tag01S4]{SP}. Thus we have
\[[X_i/G]=[\pi(X_i)].\]
Finally 
\[[X/G]=\sum_i[\pi(X_i)]=\sum_i[X_i/G].\]
\end{proof}
We define the motivic zeta-function for every reduced quasi-projective scheme~$X$:
\[\zeta_X(z)\coloneqq\sum_{n\ge 0}[Sym^nX]\cdot z^n\in 1+z\cdot\Mot_{sch}(k)[[z]],\]
where $Sym^nX$ is the symmetric power $X^n/S_n$ where $S_n$ is the permutation group (with the convention that $Sym^0X=\Spec k$).
\begin{Rem}\label{countzeta}
For $k=\mathbb{F}_q$ a finite field with $q$ elements, the image of the series $\zeta_X(z)$ under the counting measure in Theorem \ref{countingstack}
coincides with the local zeta-function, see \cite[Proposition 7.31]{Mus}.
\end{Rem}
\begin{Prop}\label{zeta}
If $X$ is a reduced quasi-projective scheme over $k$, $Y$ is a reduced closed subscheme of $X$ and $n\ge 0$. Then
\begin{enumerate}[(i)]
\item $\zeta_X(z)=\zeta_Y(z)\zeta_{X-Y}(z)$.
\item $\zeta_{X\times_k\mathbb{A}_k^n}(z)=\zeta_X(\mathbb{L}^nz)$.
\end{enumerate}
\end{Prop}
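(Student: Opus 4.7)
For (i), I would stratify $X^n$ by which coordinates lie in $Y$: for each subset $I\subseteq\{1,\dots,n\}$ with $|I|=i$, let $W_I\subset X^n$ be the locally closed subscheme where $x_j\in Y$ if and only if $j\in I$, so that $W_I\cong Y^i\times(X-Y)^{n-i}$; grouping by cardinality, $Z_i=\bigsqcup_{|I|=i}W_I$ is $S_n$-invariant, and the $Z_i$ form an $S_n$-invariant stratification $X^n=\bigsqcup_{i=0}^n Z_i$. Lemma~\ref{G-invariant} yields $[Sym^n X]=\sum_{i=0}^n[Z_i/S_n]$, and since $S_n$ permutes the components of $Z_i$ transitively with stabilizer $S_i\times S_{n-i}$ at $W_{\{1,\dots,i\}}$, there is a natural isomorphism $Z_i/S_n\cong Sym^i Y\times Sym^{n-i}(X-Y)$. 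Reading off the coefficient of $z^n$ on both sides of (i) completes this part.

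For (ii), iterating $X\times\mathbb{A}^n=(X\times\mathbb{A}^{n-1})\times\mathbb{A}^1$ reduces the claim to the case $n=1$, that is, $[Sym^m(X\times\mathbb{A}^1)]=\mathbb{L}^m[Sym^m X]$ for every $m\ge 0$. Stratify $X^m$ by multiplicity type: for each partition $\mu$ of $m$, let $X^m_\mu\subseteq X^m$ be the $S_m$-invariant locally closed stratum of tuples with multiplicity profile $\mu$, so $X^m=\bigsqcup_\mu X^m_\mu$. Applying Lemma~\ref{G-invariant} to this stratification and to the induced $S_m$-invariant stratification $X^m\times\mathbb{A}^m=\bigsqcup_\mu(X^m_\mu\times\mathbb{A}^m)$ reduces the claim stratum by stratum to $[(X^m_\mu\times\mathbb{A}^m)/S_m]=\mathbb{L}^m\,[X^m_\mu/S_m]$. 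The natural projection has fibers $\mathbb{A}^m/S_\mu$, where $S_\mu=S_{\mu_1}\times\cdots\times S_{\mu_\ell}$ acts by block permutations of coordinates; the fundamental theorem of symmetric polynomials then identifies $\mathbb{A}^m/S_\mu\cong\prod_i Sym^{\mu_i}(\mathbb{A}^1)\cong\mathbb{A}^m$.

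The main obstacle is upgrading this pointwise fiber identification to a motivic equality on each stratum, especially in positive characteristic where representation-theoretic shortcuts are unavailable. I would handle this by Noetherian induction on $\dim X^m_\mu$: on an open dense subset of $X^m_\mu$ where the partial symmetrization $X^m_\mu\to X^m_\mu/S_\mu$ is étale and the residual $S_m/S_\mu$-action on $X^m_\mu/S_\mu$ is free, faithfully flat descent of the trivial $S_m$-equivariant bundle $X^m_\mu\times\mathbb{A}^m\to X^m_\mu$ with its linear permutation action produces a Zariski-locally trivial rank-$m$ vector bundle whose motivic class equals $\mathbb{L}^m$ times that of the base. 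The complementary lower-dimensional strata are absorbed by the induction hypothesis.
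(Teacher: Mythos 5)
Your proof of part (i) is the same as the paper's: you stratify $X^n$ by which coordinates lie in $Y$, obtaining the $S_n$-invariant pieces $Z_i=W^{i,n-i}$ of the paper, invoke Lemma~\ref{G-invariant} to split $[Sym^nX]$, and identify $Z_i/S_n\cong Sym^iY\times Sym^{n-i}(X-Y)$. This is precisely the argument given.

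For part (ii) the paper only cites \cite[Proposition 7.32]{Mus} (and, in characteristic zero, \cite{GZLMH1}), so you are supplying an argument the paper omits. The overall strategy you sketch — reduce to $n=1$, split by multiplicity type, use Lemma~\ref{G-invariant}, compute fibers via $\prod_i Sym^{\mu_i}(\mathbb{A}^1)\cong\mathbb{A}^m$, and pass from \'etale-local to Zariski-local triviality via Hilbert 90 — is essentially Totaro's argument as in G\"ottsche's Lemma 4.4, and that is the right template. However, the descent step as you wrote it contains a genuine error: $S_\mu$ is not normal in $S_m$ for a general partition $\mu$, so ``the residual $S_m/S_\mu$-action on $X^m_\mu/S_\mu$'' is not defined; there is no group $S_m/S_\mu$ and no such action, and consequently the map $X^m_\mu/S_\mu\to X^m_\mu/S_m$ is not a group quotient from which one could descend an equivariant bundle. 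The Noetherian induction also does not close: you restrict to a dense open of $X^m_\mu$ where you claim things are nice, but the complement is not a union of strata of the same combinatorial shape, so ``the induction hypothesis'' has nothing to bite on, and the argument silently assumes what needs to be proved.

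The fix (which is what the commented-out portion of the paper's source does, following Totaro) is to stratify $Sym^mX$ itself rather than $X^m$, and to replace your cover $X^m_\mu\to X^m_\mu/S_\mu$ by a genuine finite \'etale cover. For $\mu=1^{a_1}2^{a_2}\cdots$ with $\ell=\sum_ja_j$, the stratum $(Sym^mX)_\mu$ is the configuration space $\bigl(X^\ell\setminus\Delta\bigr)/\prod_j S_{a_j}$, and the projection $X^\ell\setminus\Delta\to(Sym^mX)_\mu$ is \'etale because the $\prod_jS_{a_j}$-action on $X^\ell\setminus\Delta$ is free. The corresponding stratum of $Sym^m(X\times\mathbb{A}^1)$ sits over it as $\bigl((X^\ell\setminus\Delta)\times\prod_jSym^{j}(\mathbb{A}^1)^{a_j}\bigr)/\prod_jS_{a_j}$; identifying each $Sym^j(\mathbb{A}^1)$ with $\mathbb{A}^j$ via elementary symmetric polynomials, the block-permutation action of $\prod_jS_{a_j}$ becomes linear, so after \'etale descent one obtains an \'etale-locally trivial rank-$m$ vector bundle, and Hilbert 90 makes it Zariski-locally trivial — everywhere on the stratum, with no Noetherian induction needed. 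It is this stratification of the symmetric power, together with Lemma~\ref{G-invariant} on the $Y_P\cong X^\ell\setminus\Delta$ pieces of $X^m_\mu$, that you want, not the single intermediate quotient $X^m_\mu/S_\mu$.
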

\begin{proof}
This material is well-known (see e.g., \cite[Propositions 7.28 and~7.32]{Mus}). To illustrate the importance of the second relation of Definition \ref{motstacks}, we sketch a proof of (i) following \cite[Proposition 7.28 and Lemma 7.29]{Mus}). If $k$ has characteristic~$0$, the proof of (ii) is also given in \cite[Statement~3]{GZLMH1}.

For fixed $n$, let $i,j$ be nonnegative integers such that $i+j=n$. Denote by $W^{i,j}$ the locally closed subset of $X^n$ given by $\cup_{g\in S_n}(Y^i\times_k(X-Y)^j)g$.

This gives a stratification of $X^n$ by reduced locally closed subschemes preserved by the $S_n$-action. There is a natural quotient morphism $\pi\colon X^n\to Sym^nX$.

By \cite[Propositions A.7 and A.8]{Mus}, we have an isomorphism
\[W^{i,j}/S_n\cong Sym^iY\times_k Sym^j(X-Y).\]
Now by Lemma \ref{G-invariant} we have
\[[Sym^n(X)]=\sum_{i+j=n}[W^{i,j}/S_n]=\sum_{i+j=n}[Sym^iY]\cdot[Sym^j(X-Y)].
\]
Finally 
\[\zeta_X(z)=\sum_{n\ge 0}[Sym^n(X)]z^n=\sum_{n\ge 0}\sum_{i+j=n}[Sym^iY]\cdot[Sym^j(X-Y)]z^{i+j}=\zeta_Y(z)\zeta_{X-Y}(z).
\]
\end{proof}
We recall the definition of pre-$\lambda$-ring structure, see \cite[Chapter I]{Knu}.
\begin{Def}\label{prelambda}
A homomorphism of abelian groups 
\[R\to\left(1+z\cdot R[[z]]\right)^{\times}\] 
\[\quad\quad\quad\quad\quad\quad\quad\quad\quad\quad\quad a\mapsto\lambda_a(z)=1+az+\text{terms of degree at least }2\]
is called a pre-$\lambda$-ring structure. In other words, for all $a,b\in R$ we have
\begin{enumerate}[(i)]
\item $\lambda_a(z)\equiv1+az\ (\text{mod}\ z^2)$.
\item $\lambda_{a+b}(z)=\lambda_a(z)\lambda_b(z)$.
\end{enumerate}
\end{Def}
Now we want to define a pre-$\lambda$-ring structure on $\overline{\Mot}(k)$ similarly to \cite[Section~1.3]{FSS1}. First, we need a lemma.
\begin{Lemma}\label{prelambdaex}
Given a pre-$\lambda$-ring structure on a ring $R$ and $l\in R\backslash\{0\}$, if for all $r\in R$ we have
\[\lambda_{l r}(z)=\lambda_r(lz),\]
then we have a unique pre-$\lambda$-ring structure on $R[l^{-1}]$ such that the homomorphism $R\to R[l^{-1}]$ is a homomorphism of pre-$\lambda$-rings, i.e, the homomorphism commutes with the pre-$\lambda$-ring structure.
\begin{proof}
\emph{Claim. Let $a\in R$ such that $l^Na=0$ for some $N>0$, then $\lambda_a(z)=1.$} Indeed
\[\lambda_a(l^Nz)=\lambda_{l^Na}(z)=\lambda_0(z)=1,\]
which implies the claim.

Consider $a\in R[l^{-1}]$, then $a$ is of the form $a'/l^i$, where $a'\in R$. Set $\lambda_a\coloneqq\lambda_{a'}(l^{-i}z)$. We need to check that $\lambda$ is well-defined. Indeed, if $a'/l^i=a''/l^j$, then
\[l^N(l^ja'-l^ia'')=0\] 
for some $N$, so by claim $\lambda_{l^ja'-l^ia''}(z)=1$, which also implies that \[\lambda_{a'/l^i}(z)=\lambda_{a''/l^j}(z).\]
It is clear that we obtained a pre-$\lambda$-ring structure on $R[l^{-1}]$ and the homomorphism $R\to R[l^{-1}]$ is a homomorphism of pre-$\lambda$-rings.
\end{proof}
\end{Lemma}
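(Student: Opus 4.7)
The plan is to define the pre-$\lambda$ structure on $R[l^{-1}]$ by the formula that the hypothesis forces on any extension. For $a\in R[l^{-1}]$, write $a=a'/l^i$ with $a'\in R$ and $i\ge 0$. Applying the hypothesis $\lambda_{lr}(z)=\lambda_r(lz)$ iteratively, any extension must satisfy $\lambda_{a'}(z)=\lambda_{l^i a}(z)=\lambda_a(l^i z)$, so I am compelled to set
\[\lambda_a(z):=\lambda_{a'}(l^{-i}z)\in 1+z\,R[l^{-1}][[z]].\]
This simultaneously dictates uniqueness and gives a candidate for existence. What remains is well-definedness and the two pre-$\lambda$ axioms.

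The main obstacle, as expected, is well-definedness. Suppose $a'/l^i=a''/l^j$ in $R[l^{-1}]$; then there exists $N\ge 0$ with $l^N(l^j a'-l^i a'')=0$ in $R$, and I need $\lambda_{a'}(l^{-i}z)=\lambda_{a''}(l^{-j}z)$ in $R[l^{-1}][[z]]$. Substituting $z\mapsto l^{i+j}z$ and applying the hypothesis on both sides reduces this to the identity $\lambda_{l^j a'-l^i a''}(z)=1$ in $R[l^{-1}][[z]]$, which I isolate as a \emph{key claim}: if $b\in R$ satisfies $l^N b=0$ in $R$, then $\lambda_b(z)=1$ in $R[l^{-1}][[z]]$. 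The proof of the claim uses the hypothesis once more: $\lambda_b(l^N z)=\lambda_{l^N b}(z)=\lambda_0(z)=1$. Writing $\lambda_b(z)=1+\sum_{i\ge 1}c_i z^i$, this equality of power series forces $l^{Ni}c_i=0$ in $R$ for every $i\ge 1$, so each $c_i$ becomes zero after inverting $l$. Notice that one cannot conclude $\lambda_b(z)=1$ in $R[[z]]$ itself; the passage to $R[l^{-1}]$ is essential, and this is the subtle point of the argument.

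The remaining checks are routine. The congruence $\lambda_a(z)\equiv 1+az\pmod{z^2}$ follows by inspecting the linear coefficient of $\lambda_{a'}(l^{-i}z)$, which equals $a'\cdot l^{-i}=a$. For additivity, given $a,b\in R[l^{-1}]$, bring them over a common denominator $l^i$ and reduce to the additivity of $\lambda$ on $R$, composed with the substitution $z\mapsto l^{-i}z$. The compatibility $R\to R[l^{-1}]$ is built into the definition (taking $i=0$), and uniqueness was already established by the forced formula. This will complete the proof.
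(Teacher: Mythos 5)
Your proof follows the same route as the paper: force the formula $\lambda_{a'/l^i}(z)\coloneqq\lambda_{a'}(l^{-i}z)$, reduce well-definedness to the claim that $l^N b=0$ implies $\lambda_b(z)=1$, and verify the axioms. You are, if anything, slightly more careful than the paper's own proof, since you explicitly observe that the claim yields $\lambda_b(z)=1$ only in $R[l^{-1}][[z]]$ rather than in $R[[z]]$ (the identity $\lambda_b(l^Nz)=1$ forces merely $l^{Ni}c_i=0$ in $R$), which is precisely the form needed for the well-definedness argument; the paper states this claim a bit loosely.
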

For any field $k$, we define $\Mot^{qpr}_{sch}(k)$ as the abelian group generated by isomorphism classes of reduced quasi-projective schemes of finite type over $k$ modulo relations of Definition~\ref{motscheme}. We have a group homomorphism $\Phi\colon \Mot^{qpr}_{sch}(k)\to\Mot_{sch}(k)$ such that $\Phi([X])=[X]$ for any reduced quasi-projective scheme $X$. In fact, this is an isomorphism.
\begin{Lemma}\label{qpr}
The group homomorphism $\Phi$ is an isomorphism.
\end{Lemma}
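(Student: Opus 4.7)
The plan is to exhibit an explicit two-sided inverse $\Psi\colon\Mot_{sch}(k)\to\Mot^{qpr}_{sch}(k)$. Surjectivity of $\Phi$ is essentially automatic: any finite-type $k$-scheme $X$ admits an open affine cover $X=U_1\cup\cdots\cup U_n$, giving rise to a stratification by the locally closed affine (hence quasi-projective) pieces $X_i:=U_i\setminus(U_1\cup\cdots\cup U_{i-1})$. The scissor relation, together with $[X]=[X_{red}]$, expresses $[X]\in\Mot_{sch}(k)$ as a sum of classes of reduced quasi-projective schemes, which clearly lie in the image of $\Phi$. This very decomposition motivates the definition $\Psi([X]):=\sum_i[(X_i)_{red}]\in\Mot^{qpr}_{sch}(k)$ for an arbitrary stratification of $X$ by locally closed quasi-projective subschemes.

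The first substantive step is to show that $\Psi$ is well-defined, independently of the choice of stratification. The standard common refinement argument applies: for two stratifications $\{X_i\}$ and $\{Y_j\}$, the intersections $X_i\cap Y_j$ are locally closed in the quasi-projective $X_i$, hence themselves quasi-projective, and iterated application of the scissor relation inside $\Mot^{qpr}_{sch}(k)$ yields $[X_i]=\sum_j[X_i\cap Y_j]$. Summing over $i$ then shows the two prescriptions agree.

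Next, $\Psi$ must descend to a map out of $\Mot_{sch}(k)$, i.e.\ respect both defining relations of Definition \ref{motscheme}. The scissor relation is straightforward: given a closed immersion $Y\hookrightarrow X$, any stratification of $X$ restricts to stratifications of $Y$ and $X\setminus Y$ by locally closed quasi-projective subschemes. The surjective-universally-injective relation is handled by Noetherian induction on $\dim Y$, reducing to $Y$ integral with generic point $\xi$. Since $f\colon X\to Y$ is surjective and universally injective, there is a unique point $\eta\in X$ above $\xi$. Choose an affine open neighborhood $W\subset X$ of $\eta$; then $\xi\notin\overline{f(X\setminus W)}$, so letting $V$ be a dense affine open of $Y\setminus\overline{f(X\setminus W)}$, the preimage $f^{-1}(V)\subset W$ is a quasi-compact open of an affine scheme, hence quasi-affine and therefore quasi-projective. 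By stability of ``surjective and universally injective'' under base change (\cite[Tag03MH and Tag03MW]{SP}), the restriction $f^{-1}(V)\to V$ remains surjective and universally injective, and the equality $[f^{-1}(V)]=[V]$ holds directly in $\Mot^{qpr}_{sch}(k)$ by the defining relation; the complement $Y\setminus V$ has strictly smaller dimension and is handled by the inductive hypothesis.

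The main obstacle is precisely this last step: ensuring that a dense open $V\subset Y$ can be found so that $f^{-1}(V)$ is not merely a scheme but is actually quasi-projective. The argument sketched above is exactly parallel to Step~3 of the proof of Theorem \ref{motequal} and resolves the difficulty. Once $\Psi$ is shown to be well-defined and to respect both relations, the verifications $\Phi\circ\Psi=\Id_{\Mot_{sch}(k)}$ (by the scissor relation applied in $\Mot_{sch}(k)$) and $\Psi\circ\Phi=\Id_{\Mot^{qpr}_{sch}(k)}$ (using the trivial one-stratum stratification of a reduced quasi-projective scheme) are immediate, completing the proof that $\Phi$ is an isomorphism.
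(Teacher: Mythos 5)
Your proof is correct. The paper does not prove this lemma inline; it is a one-line citation to Musta{\c t}{\u a}'s Proposition~7.27, so there is no in-paper argument to compare against. What you supply is a self-contained version of the argument that citation delegates: construct an inverse $\Psi$ by summing reduced quasi-projective strata, prove independence of the stratification by common refinement, and verify that $\Psi$ respects both relations of Definition~\ref{motscheme}. You correctly isolate the one genuinely delicate point --- producing a dense open $V\subset Y$ whose preimage under a surjective, universally injective $f\colon X\to Y$ is quasi-projective --- and resolve it by shrinking to an affine neighbourhood of the (unique) point over the generic point, exactly parallel to Step~3 of the paper's own proof of Theorem~\ref{motequal}. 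Two small points of hygiene: since $\Mot^{qpr}_{sch}(k)$ is generated by \emph{reduced} quasi-projective schemes, the classes you write as $[X_i\cap Y_j]$ and $[f^{-1}(V)]$ should be read as $[(X_i\cap Y_j)_{\mathrm{red}}]$ and $[(f^{-1}(V))_{\mathrm{red}}]$, and one should observe that passing to the reduction preserves surjectivity and universal injectivity (since field-valued points factor through the reduction). With those cosmetic adjustments your argument is complete and agrees in content with the cited source.
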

\begin{proof}
See \cite[Proposition 7.27]{Mus}.
\end{proof}
\begin{Th}\label{prestructure}
There is a unique continuous pre-$\lambda$-ring structure on $\overline{\Mot}(k)$ that sends the class of a reduced quasi-projective scheme $X$ to $(\zeta_X(-z))^{-1}$ and for any $A\in\overline{\Mot}(k)$ and any $n\in\mathbb{Z}$ we have $\zeta_{\mathbb{L}^nA}(z)=\zeta_A(\mathbb{L}^nz)$.
\end{Th}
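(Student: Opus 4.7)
The plan is to build the pre-$\lambda$ structure in stages: first define it on reduced quasi-projective schemes by $\lambda_{[X]}(z) \coloneqq \zeta_X(-z)^{-1}$, then descend through the generators and relations to obtain a pre-$\lambda$ structure on $\Mot_{sch}(k)$, then extend to the localization $\Mot_{sch}(k)[\mathbb{L}^{-1}]$ via Lemma~\ref{prelambdaex}, and finally extend by continuity to $\overline{\Mot}_{sch}(k) \cong \overline{\Mot}(k)$ using Proposition~\ref{completioniso}. Uniqueness at each stage is automatic once one notes that reduced quasi-projective schemes generate $\Mot^{qpr}_{sch}(k) \cong \Mot_{sch}(k)$ (Lemma~\ref{qpr}).

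First I would verify that $[X] \mapsto \zeta_X(-z)^{-1}$ respects the two defining relations of $\Mot^{qpr}_{sch}(k)$. The scissor relation is immediate from Proposition~\ref{zeta}(i), which gives $\zeta_{X \sqcup Y}(z) = \zeta_X(z)\zeta_Y(z)$. For the universally injective relation one argues that a surjective, universally injective morphism $f \colon X \to Y$ induces the surjective, universally injective morphism $f^n \colon X^n \to Y^n$ (both properties being stable under products and base change by \cite[Tag03MH, Tag03MW]{SP}), and then the induced map $Sym^n X \to Sym^n Y$ is again surjective and universally injective because it is bijective on geometric points by \cite[Proposition A.25]{Mus}. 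Thus $[Sym^n X] = [Sym^n Y]$ in $\Mot_{sch}(k)$ and $\zeta_X(z) = \zeta_Y(z)$. This produces a well-defined group homomorphism $\lambda \colon \Mot_{sch}(k) \to (1 + z\cdot\Mot_{sch}(k)[[z]])^{\times}$, and the normalization $\lambda_a(z) \equiv 1 + az \pmod{z^2}$ holds because $\zeta_X(-z) = 1 - [X]z + O(z^2)$.

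Next, Proposition~\ref{zeta}(ii) yields $\lambda_{\mathbb{L}[X]}(z) = \zeta_{X \times \mathbb{A}^1}(-z)^{-1} = \zeta_X(-\mathbb{L}z)^{-1} = \lambda_{[X]}(\mathbb{L}z)$, and by additivity this gives $\lambda_{\mathbb{L}A}(z) = \lambda_A(\mathbb{L}z)$ for every $A \in \Mot_{sch}(k)$. Lemma~\ref{prelambdaex} then produces a unique pre-$\lambda$ structure on $\Mot_{sch}(k)[\mathbb{L}^{-1}]$ satisfying the same identity. To extend to the completion, I would check continuity: for $A = [X]/\mathbb{L}^n$ with $\dim X - n \le -M$, the coefficient of $z^k$ in $\lambda_A(z) = \zeta_X(-\mathbb{L}^{-n}z)^{-1}$ is a polynomial in the $[Sym^{m_i}X]/\mathbb{L}^n$ with $\sum m_i = k$; each summand has dimension at most $k(\dim X - n) \le -kM$, so $\lambda$ maps $F^M\Mot_{sch}(k)[\mathbb{L}^{-1}]$ into $1 + \bigl(F^M\Mot_{sch}(k)[\mathbb{L}^{-1}]\bigr)[[z]]$. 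Hence $\lambda$ extends uniquely and continuously to $\overline{\Mot}_{sch}(k)$, and via Proposition~\ref{completioniso} to $\overline{\Mot}(k)$, preserving the identity $\lambda_{\mathbb{L}^n A}(z) = \lambda_A(\mathbb{L}^n z)$ for all $n \in \mathbb{Z}$ by continuity.

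The main obstacle will be the universally injective relation at the level of symmetric powers, i.e. showing that $Sym^n X \to Sym^n Y$ inherits surjectivity and universal injectivity from $X \to Y$. Surjectivity on the quotient level follows formally, but universal injectivity needs care: one works on geometric points, using \cite[Proposition A.25]{Mus} which identifies $Sym^n X$ as a geometric quotient, and combines it with the fact that universally injective morphisms are exactly those that are injective on geometric points and induce purely inseparable residue field extensions (\cite[Tag01S4]{SP}). Once this is established, the remaining verifications — scissor compatibility, extension to the localization, and continuity — are formal consequences of the results already assembled in Section~\ref{section2}.
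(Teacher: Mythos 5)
Your proposal follows essentially the same route as the paper's proof: define the pre-$\lambda$ structure on reduced quasi-projective schemes by $\lambda_{[X]}(z)=(\zeta_X(-z))^{-1}$, descend to $\Mot_{sch}(k)$ via Lemma~\ref{qpr}, localize at $\mathbb{L}$ using Proposition~\ref{zeta}(ii) together with Lemma~\ref{prelambdaex}, and pass to the completion by continuity and Proposition~\ref{completioniso}. The only difference is that you spell out the verification that the map respects the defining relations of $\Mot^{qpr}_{sch}(k)$ — in particular that surjective, universally injective morphisms are inherited by symmetric powers — a step the paper delegates by citation to Musta{\c t}{\u a}'s Proposition 7.28.
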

\begin{proof}
Similarly to \cite[Proposition 7.28]{Mus}, a pre-$\lambda$-ring structure can be defined on $\Mot^{qpr}_{sch}(k)$ by
\[\lambda_{X}(z)\coloneqq(\zeta_X(-z))^{-1},\]
where $X$ is a reduced quasi-projective scheme. 

By Lemma \ref{qpr}, we first obtain a pre-$\lambda$-ring structure on $\Mot_{sch}(k)$. We extend the pre-$\lambda$-ring structure on $\Mot_{sch}(k)$ to the pre-$\lambda$-ring structure on $\Mot_{sch}(k)[\mathbb{L}^{-1}]$ by Proposition \ref{zeta}(ii) and Lemma \ref{prelambdaex}. Next we extend the pre-$\lambda$-ring structure to $\overline{\Mot}_{sch}(k)$ by continuity. Finally, by Proposition \ref{completioniso}, we have an isomorphism 
$\overline{\Mot}(k)\simeq\overline{\Mot}_{sch}(k).$ 

Explicitly, any class $A\in\overline{\Mot}(k)$ can be written as the limit of a sequence $([Y_i]-[Z_i])/\mathbb{L}^{n_i}$, where $Y_i$ and $Z_i$ are reduced quasi-projective schemes. Then we have
\[\lambda_A(z)=\lim_{i\to\infty}\frac{\zeta_{Z_i}(-\mathbb{L}^{-n_i}z)}{\zeta_{Y_i}(-\mathbb{L}^{-n_i}z)},\]
which proves uniqueness.
\end{proof}
\begin{Rem}
We are using the opposite pre-$\lambda$-ring structure here because for the pre-$\lambda$-ring structure in \cite[Section 1.3]{FSS1}, we have $\lambda(1)=(1-z)^{-1}$. Thus it admits no non-trivial homomorphism to a $\lambda$-ring (see Section~\ref{section260}).
\end{Rem}
\subsection{Stack motivic functions}\label{section26}
For any stack $\mathcal{X}$ locally of finite type over a field $k$, we will define an abelian group $\Mot(\mathcal{X})$ of motivic functions on $\mathcal{X}$. When $\mathcal{X}=\Spec k$, we recover $\Mot(k)$.

Following the definitions in \cite[Section 2]{FSS1}, we define first the group of motivic functions for stacks of finite type except that we add a relation for the surjective and universally injective morphisms.
\begin{Def}
Let $\mathcal{X}$ be an Artin stack of finite type over $k$. The abelian group $\Mot(\mathcal{X})$ is the group generated by the isomorphism classes of finite type morphisms $\mathcal{Y}\to\mathcal{X}$ modulo relations
\begin{enumerate}[(i)]
\item $[\mathcal{Y}_1\to\mathcal{X}]=[\mathcal{Y}_2\to\mathcal{X}]+[(\mathcal{Y}_1-\mathcal{Y}_2)\to\mathcal{X}]$ whenever $\mathcal{Y}_2$ is a closed substack of~$\mathcal{Y}_1$.
\item $[\mathcal{Y}_2\to\mathcal{X}]=[\mathcal{Y}_1\to\mathcal{X}]$ if there is a surjective and universally injective morphism $\mathcal{Y}_2\to\mathcal{Y}_1$ of stacks over $\mathcal{X}$.
\item If $\pi\colon \mathcal{Y}_1\to\mathcal{X}$ is a morphism of finite type and $\psi\colon \mathcal{Y}_2\to\mathcal{Y}_1$ is a vector bundle of rank $r$, then
\[[\mathcal{Y}_2\xrightarrow{\pi\circ\psi}\mathcal{X}]=[\mathcal{Y}_1\times_k\mathbb{A}^r_k\xrightarrow{\pi\circ p_1}\mathcal{X}].\]
\end{enumerate}
We call $\Mot(\mathcal{X})$ the \textit{group of stack motivic functions} on $\mathcal{X}$. We write $[\mathcal{Y}]$ instead of  $[\mathcal{Y}\to\Spec k]$. Note that $\Mot(\Spec k)$ coincides with $\Mot(k)$ defined in Definition~\ref{motstacks}.
\end{Def}
We define the pullback and pushforward homomorphisms for stacks of finite type. Similarly to the content of \cite[Section~2.1]{FSS1}, for a morphism $f\colon \mathcal{X}\to\mathcal{Y}$ of stacks of finite type possibly over different fields, we define the pullback homomorphism $f^{\ast}\colon \Mot(\mathcal{Y})\to\Mot(\mathcal{X})$ by
\[f^{\ast}([\mathcal{S}\to\mathcal{Y}])=[\mathcal{S}\times_{\mathcal{Y}}\mathcal{X}\to\mathcal{X}].
\]
The pullback homomorphism is well-defined because the surjective and universally injective morphisms are stable under base change, see \cite[Tag03MH and Tag03MW]{SP}.

For a morphism $f\colon \mathcal{X}\to\mathcal{Y}$ of finite type, we also define the pushforward homomorphism $f_{!}\colon \Mot(\mathcal{X})\to\Mot(\mathcal{Y})$ by
\[
f_{!}([\pi\colon \mathcal{S}\to\mathcal{X}])=[f\circ\pi\colon \mathcal{S}\to\mathcal{Y}].
\]
The pullbacks and pushforwards satisfy the same properties as in \cite[Section~2.1]{FSS1}. For $k$-stacks $\mathcal{X}$ and $\mathcal{Y}$ of finite type, we have an external product $\boxtimes\colon \Mot(\mathcal{X})\otimes_{\mathbb{Z}}\Mot(\mathcal{Y})\to\Mot(\mathcal{X}\times_k\mathcal{Y})$.

To define the group of motivic functions for a stack $\mathcal{X}$ locally of finite type, first we denote by $\Opf(\mathcal{X})$ the set of finite type open substacks $\mathcal{U}\subset\mathcal{X}$ ordered by inclusion. 
\begin{Def}
Let $\mathcal{X}$ be a stack locally of finite type. We define
\[\Mot(\mathcal{X})\coloneqq\varprojlim\Mot(\mathcal{U}),\]
where the limit is taken over the partially ordered set $\Opf(\mathcal{X})$. 
\end{Def}
If $\mathcal{X}\to\mathcal{Y}$ is a morphism of finite type, we write $[\mathcal{X}\to\mathcal{Y}]\in\Mot(\mathcal{Y})$ for the inverse system given by $\mathcal{U}\mapsto\mathcal{X}\times_{\mathcal{Y}}\mathcal{U}$.

Notice that the pullback extends to any morphism of stacks, and the pushforward extends to any morphism of finite type. Set
\begin{equation}\label{fin}
\Mot^{fin}(\mathcal{X})\coloneqq\cup_{\mathcal{U}\in\Opf(\mathcal{X})}(j_{\mathcal{U}})_!\left(\Mot(\mathcal{U})\right)\subset\Mot(\mathcal{X}),
\end{equation}
where $j_{\mathcal{U}}\colon \mathcal{U}\to\mathcal{X}$ is the open immersion. This is the group of ``motivic functions with finite support''. Given a morphism of finite type, the pushforward and pullback homomorphisms preserve $\Mot^{fin}$.

Let $\mathcal{X}$ be an Artin stack of finite type. We define the completion $\overline{\Mot}(\mathcal{X})$ with respect to the relative dimension filtration and call it \emph{the completed group of stack motivic functions}. In particular, $\overline{\Mot}(\Spec k)$ coincides with $\overline{\Mot}(k).$ 

If $\mathcal{X}$ is an Artin stack locally of finite type, we define
\[\overline{\Mot}(\mathcal{X})\coloneqq\varprojlim\overline{\Mot}(\mathcal{U}),\]
where the limit is taken over the partially ordered set $\Opf(\mathcal{X})$. Note that the pullback $f^{\ast}$ and pushforward $f_!$ can be extended to the completions. The group $\overline{\Mot}^{fin}(\mathcal{X})$ is defined similarly to (\ref{fin}).
\subsubsection{Bilinear form}\label{section270} Let $\mathcal{Y}$ be a $k$-stack of finite type. If $\mathcal{X}_1$ and $\mathcal{X}_2$ are of finite type over $\mathcal{Y}$, we set
\[([\mathcal{X}_1\to\mathcal{Y}]\vert[\mathcal{X}_2\to\mathcal{Y}])\coloneqq[\mathcal{X}_1\times_{\mathcal{Y}}\mathcal{X}_2].\]
We get a symmetric bilinear form $\Mot(\mathcal{Y})\otimes\Mot(\mathcal{Y})\to\Mot(k)$ by extending this by bilinearity. This can also be extended to a symmetric form $\overline{\Mot}(\mathcal{Y})\otimes\overline{\Mot}(\mathcal{Y})\to\overline{\Mot}(k)$ by continuity.

Let $\mathcal{Y}$ be a $k$-stack locally of finite type. Given $A\in\overline{\Mot}^{fin}(\mathcal{Y})$, $B\in\overline{\Mot}(\mathcal{Y})$, we write $A=j_!A_{\mathcal{V}}$, where $\mathcal{V}\in\Opf(\mathcal{Y})$ with open immersion $j\colon \mathcal{V}\to\mathcal{Y}$. Let $B$ be given by an inverse system $\mathcal{U}\mapsto B_{\mathcal{U}}$, where $\mathcal{U}$ ranges over $\Opf(\mathcal{Y})$. Set $(A\vert B)\coloneqq(A_{\mathcal{V}}\vert B_{\mathcal{V}})$, which is independent of the choice of $\mathcal{V}$. Now we have a continuous bilinear form 
\[(\bullet\vert\bullet)\colon \overline{\Mot}^{fin}(\mathcal{Y})\otimes\overline{\Mot}(\mathcal{Y})\to\overline{\Mot}(k).\]
We will get a symmetric bilinear form if we restrict to $\overline{\Mot}^{fin}(\mathcal{Y})\otimes\overline{\Mot}^{fin}(\mathcal{Y})$.
\subsection{Motivic classes over the perfection of the field}
We want to give a theorem comparing the motivic classes over a field $k$ and its perfection.

Recall that the perfection of a field $k$ is $k^{perf}\coloneqq\colim_n k^{1/p^n}$ where $k^{1/p^n}$ consists of $p^n$-th roots of elements in $k$.
\begin{Th}\label{perfectfield}
Let $f:\Spec k^{perf}\to\Spec k$ be the natural morphism. Then the morphism \[f^\ast:\Mot(k)\xrightarrow{\sim}\Mot(k^{perf})\] is an isomorphism of rings.
\end{Th}
\begin{proof}
By the definition of $k^{perf}$, we have
\[\Mot(k^{perf})=\colim_n\Mot(k^{1/p^n}).\]
We have a natural morphism $g:\Spec k^{1/p}\to\Spec k$. Now it suffices to prove that the induced morphisms $g^\ast$ and $g_\ast$ are inverse to each other
\[g^\ast:\Mot(k)\xrightarrow{\sim}\Mot(k^{1/p}):g_\ast.\]
By Theorem \ref{motequal}, it suffices to prove a similar statement with $\Mot(k)$ replaced by $\Mot_{sch}(k)$. One composition takes the class of a $k^{1/p}$-scheme $X$ to the class of its Frobenius twist $X^{(p)}\coloneqq X\times_kk^{1/p}$ over $k$. We have a morphism $X^{(p)}\to X$ that is surjective and universally injective because it is a base change of $\Spec k^{1/p}\to\Spec k$.

The other composition takes the class of a $k$-scheme $Y$ to the class of $Y\times_kk^{1/p}$ viewed as a $k$-scheme. This composition is the identity for a similar reason.
\end{proof}
\subsection{The universal $\lambda$-ring quotient}\label{section260}
We recall the definition of the universal quotient of $\overline{\Mot}(k)$ that is a $\lambda$-ring, see \cite{LL} and \cite[Section 8]{FSS3} for more details.

Let $R$ be a pre-$\lambda$-ring (see Definition \ref{prelambda}), and write $\lambda\colon R\to(1+zR[[z]])^{\times}$ as $\lambda=\sum_{i=0}^\infty\lambda_iz^i$. Recall that $R$ is a $\lambda$-ring if $\lambda(1)=1+z$ and for all $x,y\in R$ and $m,n\in\mathbb{Z}_{\ge0}$, $\lambda_n(xy)$ and $\lambda_n(\lambda_m(x))$ can be expressed in terms of $\lambda_i(x)$ and $\lambda_j(y)$ using the ring operations in a standard way (see \cite[Expos\'e~V, D\'efinition 2.4,~(2.4.1)]{SGA6} and \cite[Chapter I., Section 1]{Knu}).

We denote by $\overline{\Mot}^{\lambda-ring}(k)$ the universal quotient of $\overline{\Mot}(k)$ that is a $\lambda$-ring, see \cite{LL}.
\begin{Rem}
We don't know whether the following two morphisms are injective:\[\Mot(k)\to\overline{\Mot}(k)\to\overline{\Mot}^{\lambda-ring}(k).\]
\end{Rem}
\section{Counting measure for the ring of motivic classes}\label{count}
In this section we define a counting measure for the stacks of finite type over finite fields and show that the volume of the groupoid only depends on the image in the universal $\lambda$-ring quotient.
\subsection{Counting measure for the universal $\lambda$-ring quotient}
Recall the definition of the volume of a groupoid.
\begin{Def}
Let $\mathcal{G}$ be a groupoid with finite number of isomorphism classes. We define the volume of a groupoid $\mathcal{G}$ as the weighted sum:
\[\vert\mathcal{G}\vert\coloneqq\sum_{G\in\mathcal{G}/\sim}\frac{1}{\vert\Aut(G)\vert},\]
where the sum is over the set of isomorphism classes in $\mathcal{G}$.
\end{Def}
Fix a prime power $q$. If $\mathcal{X}$ is a stack of finite type over the finite field $\mathbb{F}_q$, we denote by $\vert \mathcal{X}(\mathbb{F}_q)\vert$ the volume of the groupoid $\mathcal{X}(\mathbb{F}_q)$. Note that for a scheme $X$, $\vert X(\mathbb{F}_q)\vert$ coincides with the number of $\mathbb{F}_q$-points of $X$.
\begin{Th}\label{countingstack}
\begin{enumerate}[(i)]
    \item We have a unique ring homomorphism
\[\#\colon\Mot(\mathbb{F}_q)\to\mathbb{Q}\]
such that for every finite type $\mathbb{F}_q$-stack $\mathcal{X}$, $\#([\mathcal{X}])=\vert\mathcal{X}(\mathbb{F}_q)\vert$.
\item For any $A\in\Mot(\mathbb{F}_q)$, the counting measure $\#(A)$ depends only on the image of $A$ in $\overline{\Mot}^{\lambda-ring}(\mathbb{F}_q)$.
\end{enumerate}
\end{Th}
\begin{proof}[Proof of part (i)]
First we have a ring homomorphism
\[\Mot_{sch}(\mathbb{F}_q)\to\mathbb{Z}\] sending $[X]$ to $\vert X(\mathbb{F}_q)\vert$ by \cite[Proposition 7.26]{Mus}.

Then we can extend the ring homomorphism as follows:
\[\#\colon \Mot_{sch}(\mathbb{F}_q)[\mathbb{L}^{-1},(\mathbb{L}^{i}-1)^{-1}\vert i>0]\to\mathbb{Q}\]
where we send $\mathbb{L}^{-1}$ to $q^{-1}$ and $(\mathbb{L}^{i}-1)^{-1}$ to $(q^{i}-1)^{-1}$.

Composing Theorem \ref{motequal} and \cite[Lemma 3.5.1, Propositions 3.5.6 and 3.5.9]{Kre}, it suffices to show that for a scheme $X$, 
\[\vert \left(X/\GL(n)\right)(\mathbb{F}_q)\vert=\frac{\#([X])}{\#([\GL(n)])}.\]
By the proof of Theorem \ref{motequal} and the definition of $\#$,
\[\#([X/\GL(n)])=\frac{\#([X])}{\#([\GL(n)])}.\]
Applying \cite[Lemma 2.5.1]{Beh}, we finally have
\[\vert \left(X/\GL(n)\right)(\mathbb{F}_q)\vert=\frac{\#([X])}{\#([\GL(n)])}=\#([X/\GL(n)]).\]
The uniqueness is obvious. 
\end{proof}
This ring homomorphism is called the \emph{counting measure}.
\begin{Rem}
Our proof is inspired by \cite[Section 5]{BD}, where for a subring $\overline{\Mot}_{conv}(\mathbb{F}_q)$ of $\overline{\Mot}(\mathbb{F}_q)$ consisting of the so-called convergent elements, the authors defined a counting measure 
\[\#\colon  \overline{\Mot}_{conv}(\mathbb{F}_q)\to\mathbb{\mathbb{C}}\]
which is a ring homomorphism. Note that $\#$ is not continuous, for example, the sequence $q^n/\mathbb{L}^n$ converges to zero in $\overline{\Mot}(\mathbb{F}_q)$, but the counting measure converges to~$1$.
\end{Rem}
To prove the second part of the above theorem, first we need to define the following $\lambda$-ring $R_q$.
\subsection{$\lambda$-ring $R_q$}\label{lambdaring}
We first recall the definition of Weil number: $\alpha\in\overline{\mathbb{Q}}$ is called a $q$-Weil number of weight $i$, if for all embeddings $\sigma\colon\overline{\mathbb{Q}}\hookrightarrow\mathbb{C}$, we have $\vert\sigma(\alpha)\vert=q^{i/2}$.

We define the following abelian symmetric monoidal category
\[J_q=\{(V,\phi)\colon V\text{ is a finite dimensional }\overline{\mathbb{Q}}_{\ell}\text{-vector space, }\phi\colon V\to V \text{ is an}\]
\[\text{endomorphism where the eigenvalues of }\phi\text{ are }q\text{-Weil numbers}\}.\]
We also define its full subcategory
\[J_{q,i}=\{(V,\phi)\colon V\text{ is a finite dimensional }\overline{\mathbb{Q}}_{\ell}\text{-vector space, }\phi\colon V\to V \text{ is an}\]
\[\text{endomorphism where the eigenvalues of }\phi\text{ are }q\text{-Weil numbers of weight }i\}.\]
Set $R_q\coloneqq K[J_q]\otimes\mathbb{Q}$ where $K[J_q]$ denotes the Grothendieck ring of $J_q$. In other words, ~$R_q$ is a $\mathbb{Q}$-vector space generated by the
isomorphism classes of objects in $J_q$ modulo the relation 
\[[V,\phi]=[V_1,\phi_1]+[V_2,\phi_2]\] 
for all exact sequences 
\[0\to(V_1,\phi_1)\to(V,\phi)\to(V_2,\phi_2)\to0.\]
It has a $\mathbb{Q}$-algebra structure given by
\[[V_1,\phi_1]\cdot[V_2,\phi_2]\coloneqq[V_1\otimes V_2,\phi_1\otimes\phi_2].\]
We have $R_q=\bigoplus_iR_{q,i}$ where $R_{q,i}\coloneqq K[J_{q,i}]\otimes\mathbb{Q}$. Thus $R_q$ is a graded ring.

For $m\ge0$, we define $F^mR_q\coloneqq\bigoplus_{i\le-m}R_{q,i}$. This gives a ring filtration and we define the completed ring $\overline{R}_q$ as the completion of $R_q$ with respect to this filtration, i.e.,
\[\overline{R}_q\coloneqq\left\{\sum_{-\infty}^{j=N}[V_j,\phi_j]\colon [V_j,\phi_j]\in R_{q,j}.\right\}\]
The isomorphism classes of the simple
objects in $J_q$ are $\mathcal{E}_\lambda\coloneqq(\overline{\mathbb{Q}}_{\ell},\lambda)$, where $\lambda\in\overline{\mathbb{Q}}_{\ell}=\End(\overline{\mathbb{Q}}_{\ell})$ is such that $\lambda$ is a $q$-Weil number. Thus, $R_q$ as
a $\mathbb{Q}$-vector space has a basis $[\mathcal{E}_\lambda]$ and the product is $[\mathcal{E}_\lambda][\mathcal{E}_\mu] = [\mathcal{E}_{\lambda\mu}]$.

Since $\mathbb{Q}\subset R_q$, by \cite[Section 8.1]{FSS3}, we can define the $\lambda$-ring structure by specifying the $\psi$-operations. We define the $\psi$-operations to be the homomorphisms $\psi_n\colon R_q\to R_q$ such that for the basis $[\mathcal{E}_\lambda]\in R_q$, $\psi_n([\mathcal{E}_\lambda])=[\mathcal{E}_{\lambda^n}]$. Now we have the following properties $\psi_0=1,\psi_1=\Id_{R_q}$, and for all $n,m\in\mathbb{Z}_{\ge0}$ we have $\psi_{nm}=\psi_{n}\circ\psi_{m}$.  This gives us a well-defined $\lambda$-ring structure. 

We will need the following lemmas.
\begin{Lemma}\label{superlambda}
For $\mathcal{V}_{\pm}\in J_q$ we have
\[\lambda([\mathcal{V}_+]-[\mathcal{V}_-])=\left(\sum _{i=0}^{\dim \mathcal{V}_+}[\wedge^i\mathcal{V}_+]z^i\right)\left(\sum_{i=0}^{\infty}[Sym^i\mathcal{V}_-](-z)^i\right).\]
\end{Lemma}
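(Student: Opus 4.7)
The plan is to reduce to the case of simple objects $E_\lambda$ and to use the fact that, in a $\mathbb{Q}$-algebra, the $\lambda$-structure is uniquely determined by the $\psi$-operations via the Newton identities (equivalently, by $\psi_{-z}(x) = -z\,\frac{d}{dz}\log \lambda_z(x)$).

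First I would observe that each simple class $[E_\lambda]$ is a \emph{line element}, i.e.\ satisfies $\lambda_z([E_\lambda]) = 1 + [E_\lambda]z$. This is the key computation: since $[E_\lambda] \cdot [E_\mu] = [E_{\lambda\mu}]$ we have $[E_\lambda]^n = [E_{\lambda^n}] = \psi_n([E_\lambda])$, and the characterization $\psi_n(x) = x^n$ for all $n \ge 1$ forces $\lambda_i(x) = 0$ for $i \ge 2$ by Newton's recursion in a $\mathbb{Q}$-$\lambda$-ring. So line elements are precisely those whose $\psi$-orbits are their own powers, and $[E_\lambda]$ is of this form by construction.

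Next, I would use semisimplicity of $J_q$ at the level of Grothendieck classes: any $V \in J_q$ has $[V] = \sum_i [E_{\lambda_i}]$, where the $\lambda_i$ are the generalized eigenvalues of the endomorphism (with multiplicity). This lets us compute $[\wedge^i V_+]$ and $[\text{Sym}^i V_-]$ in $R_q$ as elementary and complete symmetric functions in the $[E_{\lambda_j}]$ respectively: with $V_+ = \bigoplus E_{\lambda_j}$ and $V_- = \bigoplus E_{\mu_j}$,
\[
\sum_{i=0}^{\dim V_+} [\wedge^i V_+]\, z^i \;=\; \prod_j (1+[E_{\lambda_j}]z), \qquad \sum_{i\ge 0} [\text{Sym}^i V_-]\,(-z)^i \;=\; \prod_j (1+[E_{\mu_j}]z)^{-1},
\]
where the second identity is the classical generating function $\prod (1-xt)^{-1} = \sum h_i t^i$ applied with $t=-z$ and $x = [E_{\mu_j}]$.

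Finally, combining the additivity property of Definition \ref{prelambda}(ii), which gives $\lambda_z([V_+] - [V_-]) = \lambda_z([V_+]) \cdot \lambda_z(-[V_-]) = \lambda_z([V_+]) / \lambda_z([V_-])$, with the line-element computation $\lambda_z([E_\nu]) = 1 + [E_\nu]z$ and the two product formulas above, yields
\[
\lambda_z([V_+]-[V_-]) \;=\; \frac{\prod_j(1+[E_{\lambda_j}]z)}{\prod_j(1+[E_{\mu_j}]z)} \;=\; \Bigl(\sum_i [\wedge^i V_+]z^i\Bigr)\Bigl(\sum_i [\text{Sym}^i V_-](-z)^i\Bigr),
\]
which is the claim. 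The only genuinely non-routine step is the first one — justifying that $[E_\lambda]$ is a line element — and the obstacle is mild: it amounts to unwinding the uniqueness of the $\lambda$-structure on $R_q$ determined by the chosen $\psi$-operations, which is already asserted in the paragraph defining $R_q$.
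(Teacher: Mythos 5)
Your proposal is correct and follows the same overall strategy as the paper: reduce to the simple objects $E_\lambda$, compute $\lambda([E_\lambda])=1+[E_\lambda]z$, and reassemble via additivity of $\lambda$. The one genuine difference is how the ``line-element'' property is established. The paper simply cites \cite[Lemma 8.1.1]{FSS3} for $\lambda([E_\lambda])=1+z[E_\lambda]$; you instead re-derive it from the defining $\psi$-operations, using $\psi_n([E_\lambda])=[E_{\lambda^n}]=[E_\lambda]^n$ together with the Newton recursion in the $\mathbb{Q}$-algebra $R_q$ to force $\lambda_i([E_\lambda])=0$ for $i\ge 2$. That is a nice, self-contained alternative to the external citation, and it is what the paragraph defining the $\lambda$-structure on $R_q$ is implicitly relying on. One point where your write-up is slightly looser than the paper's: you write ``with $V_+=\bigoplus E_{\lambda_j}$,'' which is not literally true since objects of $J_q$ need not be semisimple (Jordan blocks give non-split extensions). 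What you actually need is that $[\wedge^i V_+]$ equals the $i$-th elementary symmetric polynomial in the $[E_{\lambda_j}]$ of a Jordan--H\"older filtration of $V_+$; this is exactly what the paper's filtration $0=F^0\subseteq\cdots\subseteq F^i=\wedge^i V_+$ with $F^j/F^{j-1}\simeq\wedge^j V_1\otimes\wedge^{i-j}V_2$ supplies, applied inductively along a composition series. You should say this (or cite it) rather than pretend $V_+$ splits; the same remark applies to $\operatorname{Sym}^i V_-$. With that fix, your proof is complete.
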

\begin{proof}
(i) We first show that
\[\lambda([\mathcal{V}_+])=\sum _{i=0}^{\dim \mathcal{V}_+}[\wedge^i\mathcal{V}_+]z^i.\]
For an exact sequence $0\to \mathcal{V}_1\to \mathcal{V}_+\to \mathcal{V}_2\to0$, there is a filtration
\[0=\mathcal{F}^0\subseteq\mathcal{F}^1\subseteq\mathcal{F}^2\subseteq\cdots\subseteq \mathcal{F}^{i}=\wedge^{i}\mathcal{V}_+,\]
such that $\mathcal{F}^j/\mathcal{F}^{j-1}\simeq\wedge^j\mathcal{V}_1\otimes\wedge^{i-j}\mathcal{V}_2$. Thus,
\[[\wedge^i\mathcal{V}_+]=\bigoplus_{j=0}^i[\wedge^j\mathcal{V}_1]\cdot[\wedge^{i-j}\mathcal{V}_2].\]
It follows that it is enough to check the identity when $\mathcal{V}_+ =\mathcal{E}_\lambda$. By \cite[Lemma~8.1.1]{FSS3},
\[\lambda([\mathcal{E}_\lambda])=1+z[\mathcal{E}_\lambda]=\sum_{i=0}^1[\wedge^i\mathcal{E}_\lambda]z^i.\]
Similarly, 
\[\lambda(-[\mathcal{E}_\lambda])=\frac{1}{1+z[\mathcal{E}_\lambda]}=\sum_{i=0}^{\infty}[Sym^i\mathcal{E}_\lambda](-z)^i,\]
and this implies that
\[\lambda(-[\mathcal{V}_-])=\sum_{i=0}^{\infty}[Sym^i\mathcal{V}_-](-z)^i.\]
\end{proof}
We can re-write the above formula using super objects. Let $\mathcal{V}=\mathcal{V}_-\oplus \mathcal{V}_+$ be a super object of $J_q$, we define $[\mathcal{V}]\coloneqq[\mathcal{V}_+]-[\mathcal{V}_-]\in R_q$. We have
\[Sym^i\mathcal{V}\coloneqq\bigoplus_{j=0}^i\left(Sym^j\mathcal{V}_+\otimes\wedge^{i-j}\mathcal{V}_-\right).\]
Note that the parity of a summand is given by the parity of $i-j$.
\begin{Lemma}\label{Jq}
Let $\mathcal{V}$ be a super object of $J_q$, we have
\[\lambda(-[\mathcal{V}])=\sum_{i=0}^{\infty}[Sym^i\mathcal{V}](-z)^i.\]
\end{Lemma}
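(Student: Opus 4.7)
The plan is to derive the formula by applying Lemma \ref{superlambda} with the roles of $V_+$ and $V_-$ swapped, and then match the resulting product expansion with the right-hand side of the desired identity via a bookkeeping of signs coming from the super parity.

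First I would observe that $-[V] = [V_-] - [V_+]$ by definition of the super class. Applying Lemma \ref{superlambda} with $V_+$ and $V_-$ interchanged yields
\[
\lambda(-[V]) \;=\; \left(\sum_{k=0}^{\dim V_-}[\wedge^k V_-]\,z^k\right)\left(\sum_{j=0}^{\infty}[Sym^j V_+]\,(-z)^j\right).
\]
This gives an explicit factorization which I then compare with the desired right-hand side.

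Next I would expand $\sum_{i\ge 0}[Sym^i V](-z)^i$ using the decomposition $Sym^i V = \bigoplus_{j=0}^i (Sym^j V_+ \otimes \wedge^{i-j} V_-)$. Since the summand $Sym^j V_+\otimes\wedge^{i-j}V_-$ has parity $i-j$, when passing to the class in $R_q$ (where super objects contribute with the sign of their parity, consistent with $[V]=[V_+]-[V_-]$), its contribution to $[Sym^iV]$ acquires a sign $(-1)^{i-j}$. Hence
\[
\sum_{i\ge 0}[Sym^i V]\,(-z)^i \;=\; \sum_{i\ge 0}\sum_{j=0}^{i}(-1)^{i-j}[Sym^j V_+]\,[\wedge^{i-j}V_-]\,(-z)^i.
\]

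Finally, I would substitute $k = i-j$ and use the identity $(-1)^{i-j}(-z)^i = (-z)^j z^{i-j}$ (which is the straightforward check $(-1)^{i-j}(-1)^iz^i = (-1)^j z^i = (-z)^j z^{k}$) to factor the double sum as
\[
\left(\sum_{j\ge 0}[Sym^j V_+]\,(-z)^j\right)\left(\sum_{k\ge 0}[\wedge^k V_-]\,z^k\right),
\]
which matches the expression for $\lambda(-[V])$ obtained above. The only subtle point is keeping the sign convention for super objects consistent between the definition $[V]=[V_+]-[V_-]$ and the expression for $[Sym^iV]$; this is really the heart of the lemma, but once the sign rule is pinned down, the proof reduces to the short algebraic identity just described.
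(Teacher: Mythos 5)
Your proof is correct and takes essentially the same approach as the paper: apply Lemma~\ref{superlambda} with $V_+$ and $V_-$ interchanged, then match the resulting product against the definition $[Sym^iV]=\sum_j(-1)^{i-j}[Sym^jV_+][\wedge^{i-j}V_-]$. You spell out the sign bookkeeping and the reindexing $k=i-j$ explicitly, which the paper leaves implicit, but the argument is the same.
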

\begin{proof}
Write $\mathcal{V}=\mathcal{V}_-\oplus \mathcal{V}_+$. By Lemma \ref{superlambda},
\newline
\leavevmode\vadjust{\kern\dimexpr-\abovedisplayskip-\baselineskip\relax}
\begin{align*}
\lambda(-[\mathcal{V}])&=\lambda([\mathcal{V}_-]-[\mathcal{V}_+])\\
&=\left(\sum_{i=0}^{\infty}[Sym^i\mathcal{V}_+](-z)^i\right)\left(\sum _{i=0}^{\dim \mathcal{V}_+}[\wedge^i\mathcal{V}_-]z^i\right)\\
&=\sum_{i=0}^{\infty}[Sym^i\mathcal{V}](-z)^i.
\end{align*}
The last equality holds because $[Sym^i\mathcal{V}]=\sum_j(-1)^{i-j}[Sym^j\mathcal{V}_+][\wedge^{i-j}\mathcal{V}_-]$.
\end{proof}
\subsection{\'Etale realization of motivic classes}Now we construct the \'etale realization of motivic classes and give the proof of the second part of Theorem \ref{countingstack}.

Let $X$ be a reduced quasi-projective scheme over $\mathbb{F}_q$. We define 
\[\overline{X}\coloneqq X\times_{\mathbb{F}_q}\Spec\overline{\mathbb{F}}_q\] where $\overline{\mathbb{F}}_q$ is a fixed algebraic closure of $\mathbb{F}_q$ and denote by $F_q$ the Frobenius action on the $\ell$-adic cohomology.
\begin{Prop}\label{Hast}
There is a unique continuous ring homomorphism \[H^{\ast}\colon \overline{\Mot}(\mathbb{F}_q)\to\overline{R}_q\] sending the class $[X]$ of a reduced quasi-projective scheme $X$ to the following class: 
\[\sum_{i}(-1)^i[H^i_c(\overline{X},\overline{\mathbb{Q}}_{\ell}),F_q]\in R_q.\]
\end{Prop}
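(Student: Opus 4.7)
The plan is to construct $H^{\ast}$ in three stages: first define it on $\Mot_{sch}(\mathbb{F}_q)$ by assigning to each reduced quasi-projective scheme $X$ over $\mathbb{F}_q$ its $\ell$-adic Euler characteristic with compact supports, then extend to the localization $\Mot_{sch}(\mathbb{F}_q)[\mathbb{L}^{-1},(\mathbb{L}^i-1)^{-1}\mid i>0]$, and finally extend by continuity to the completion, using Proposition \ref{completioniso} and Theorem \ref{motequal} to identify the domain with $\overline{\Mot}(\mathbb{F}_q)$.

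For the first step I would set $[X]\mapsto\sum_i(-1)^i[H^i_c(\overline{X},\overline{\mathbb{Q}}_\ell),F_q]$. By Deligne's purity theorem (Weil~II), the Frobenius eigenvalues on $H^i_c(\overline{X},\overline{\mathbb{Q}}_\ell)$ are $q$-Weil numbers of weight at most $i$, so the image lies in $R_q$. The scissor relation of Definition \ref{motscheme}(i) follows from the long exact sequence of compactly supported $\ell$-adic cohomology attached to a closed-open decomposition $Y\hookrightarrow X\hookleftarrow X-Y$, and the surjective-and-universally-injective relation of Definition \ref{motscheme}(ii) follows because such a finite-type morphism of reduced $\mathbb{F}_q$-schemes is radicial and surjective, hence induces an equivalence of small \'etale sites and in particular a Frobenius-equivariant isomorphism on $H^\ast_c$. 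Multiplicativity is the K\"unneth formula for compactly supported $\ell$-adic cohomology.

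In the second step, $\mathbb{L}=[\mathbb{A}^1]$ maps to $[\overline{\mathbb{Q}}_\ell(-1)]$, which is already a unit in $R_q$ with inverse $[\overline{\mathbb{Q}}_\ell(1)]$. The element $\mathbb{L}^i-1$ becomes invertible in $\overline{R}_q$ via the convergent geometric series $-\sum_{n\ge 1}\mathbb{L}^{-in}$, since its $n$-th term sits in $R_{q,-2in}\subset F^{2in}\overline{R}_q$. The universal property of localization then yields the desired extension. Continuity with respect to the dimensional filtration is a consequence of the Deligne weight bound: if $\dim X-n\le -m$, then $[X]/\mathbb{L}^n$ has image with all Frobenius weights at most $-2m$, hence lies in $F^{2m}\overline{R}_q$. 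Extension by continuity produces a homomorphism $\overline{\Mot}_{sch}(\mathbb{F}_q)\to\overline{R}_q$, which, transported via Proposition \ref{completioniso}, gives $H^\ast$. Uniqueness is automatic because classes of reduced quasi-projective schemes generate a dense subring of $\overline{\Mot}(\mathbb{F}_q)$.

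The principal technical point I expect to be delicate is the verification of the surjective-and-universally-injective relation on $\ell$-adic cohomology with compact support: one must ensure that a radicial surjective finite-type morphism $f\colon X\to Y$ of reduced $\mathbb{F}_q$-schemes induces a Frobenius-equivariant isomorphism on $H^\ast_c(\overline{(\bullet)},\overline{\mathbb{Q}}_\ell)$. This follows from the invariance of the small \'etale site under universal homeomorphisms, combined with noetherian induction to reduce a general radicial surjection to the universal-homeomorphism case; the reduction needs to be carried out carefully to remain compatible with compact supports. A secondary but routine task is matching the dimensional and weight filtration conventions to guarantee continuity.
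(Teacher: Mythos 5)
Your proposal follows essentially the same route as the paper: define the Euler characteristic on $\Mot_{sch}(\mathbb{F}_q)$, verify the three defining relations, extend to a localization, and pass to the completion via Proposition~\ref{completioniso} and Theorem~\ref{motequal}. Your treatment of the surjective-and-universally-injective relation --- noting that radicial surjectivity alone does not give a universal homeomorphism, and that one must first apply noetherian induction to reduce to a stratification by universal homeomorphisms before invoking topological invariance of the \'etale site --- is exactly the content of the paper's Lemma~\ref{homeomorphismlemma} and the appeal to \cite[Tag03SI]{SP}; the reduction poses no problem for compact supports because the scissor relation (already verified via the long exact sequence) handles the passage between strata. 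The only cosmetic difference is that you invert all $\mathbb{L}^i-1$ at the intermediate stage to land in $\Mot(\mathbb{F}_q)$ before completing, while the paper inverts only $\mathbb{L}$ and completes $\Mot_{sch}(\mathbb{F}_q)[\mathbb{L}^{-1}]$ directly to $\overline{\Mot}_{sch}(\mathbb{F}_q)$ (where $\mathbb{L}^i-1$ then becomes invertible automatically); both paths land in the same place.
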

First we need the following lemma.
\begin{Lemma}\label{homeomorphismlemma}
Let $f\colon  X\to Y$ be a finite type morphism of Noetherian
schemes. Assume that $f$ is surjective and universally injective. Then
it is a universal homeomorphism up to stratification. More precisely,
there is a locally closed stratification $Y=\bigsqcup_i Y_i$ such that the induced
morphisms $f^{-1}(Y_i)\to Y_i$ are universal homeomorphisms.
\end{Lemma}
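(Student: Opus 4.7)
The plan is to prove the lemma by Noetherian induction on $Y$. It suffices to exhibit a nonempty open $U \subset Y$ such that $f^{-1}(U) \to U$ is a universal homeomorphism: the inductive hypothesis, applied to $f^{-1}(Y \setminus U) \to Y \setminus U$ (with the reduced induced structure on $Y \setminus U$)---which remains finite type, surjective, and universally injective, since these properties are preserved under base change---then stratifies the complement, and adjoining the stratum $U$ yields the desired stratification of $Y$.

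To produce such a $U$, I first reduce to the case where both $X$ and $Y$ are integral. Since $Y$ and $Y_{red}$ have the same underlying topological space and ``universal homeomorphism'' is a topological condition, I may replace $Y$ by $Y_{red}$; as $Y$ is Noetherian, it has finitely many irreducible components, and I may shrink $Y$ to lie in a single component, so without loss of generality $Y$ is integral with generic point $\xi$. Universal injectivity forces $X_\xi$ to consist of at most one point, and surjectivity of $f$ makes it exactly one; hence precisely one irreducible component $X_1$ of $X$ dominates $Y$, while the images of the other components are constructible (by Chevalley) and miss $\xi$, so they lie in some proper closed subset $D \subsetneq Y$. Shrinking $Y$ to $Y \setminus D$ (and $X$ correspondingly) forces $X$ into $X_1$ and thus makes it irreducible; passing to reductions delivers both $X$ and $Y$ integral.

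The heart of the argument is then a generic finiteness statement. By universal injectivity, $(X_\xi)_{red} = \Spec L$ with $L/k(\xi)$ purely inseparable; since $X_\xi$ is of finite type over $k(\xi)$, the extension $L/k(\xi)$ is finite, so $X_\xi$ is Artinian of finite length and in particular finite over $k(\xi)$. A standard spreading-out argument---upper semicontinuity of fiber dimension to obtain quasi-finiteness on an open, then Zariski's main theorem to factor this restriction as an open immersion into a finite $U$-scheme $T$, and finally shrinking $U$ by the (proper closed) image of $T \setminus f^{-1}(U)$ to excise the closed complement---produces a nonempty open $U$ such that $f^{-1}(U) \to U$ is finite. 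Surjectivity and universal injectivity descend to this restriction, yielding a finite, surjective, universally injective morphism, which is automatically a universal homeomorphism.

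The main obstacle is the last generic finiteness step: promoting zero-dimensionality of the generic fiber to finiteness of $f$ on a dense open. The finiteness of the generic fiber itself is formal (an Artinian, finite type algebra over a field is finite-dimensional), but extending finiteness to an open neighborhood requires genuine geometric input, cleanest via Zariski's main theorem together with the dimension-shaving trick. The remaining ingredients---stability of surjectivity and universal injectivity under base change, and the classical fact that a finite, surjective, universally injective morphism is a universal homeomorphism---are standard.
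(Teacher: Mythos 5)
Your proof is correct and follows essentially the same route as the paper: Noetherian induction to reduce to producing a dense open $U$ on which $f$ is finite, and then invoking the fact that a finite, surjective, universally injective morphism is a universal homeomorphism. The only cosmetic difference is that the paper cites \cite[Tag03I1]{SP} (a quasi-finite morphism is finite over a dense open of the target) where you reprove that fact directly via the reduction to integral schemes and Zariski's main theorem.
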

\begin{proof}
By Noetherian induction, it is enough to show that there is an open $U\subset Y$ such that $f^{-1}(U)\to U$ is a universal homeomorphism. Since the morphism $f$ is quasi-finite,
after shrinking $Y$, we may assume that $f$ is finite (every quasi-finite morphism is
finite over an open subset of the target, see \cite[Tag03I1]{SP}). We can further
assume that $Y=\Spec A$ is affine. Then $X=\Spec B$ is affine as well and $f$
is induced by an integral morphism of rings $A\to B$.

Finally by \cite[Tag04DF]{SP}, we know an integral, universally injective and surjective morphism is a universal homeomorphism.  
\end{proof}
\begin{proof}[Proof of Proposition \ref{Hast}]
\emph{Step 1.} Recall the definition of $\Mot_{sch}(k)$ in Definition \ref{motscheme}. We claim that there is a ring homomorphism $H^\ast\colon \Mot_{sch}(\mathbb{F}_q)\to R_q$ such that
\[H^\ast([X])=\sum_{i}(-1)^i[H^i_c(\overline{X},\overline{\mathbb{Q}}_{\ell}),F_q].\]
Recall that the absolute values of the eigenvalues of $F_q$ are of the form $q^{i/2}, i\in\mathbb{Z}$, see \cite[Corollaire 3.3.4]{Del3}.
We just need to check the following properties:
\begin{enumerate}[(i)]
    \item $H^\ast([X])=H^\ast([Y])+H^{\ast}([X-Y])$ where $Y$ is a closed subscheme of $X$ over $\mathbb{F}_q$.
\item $H^\ast([X])=H^\ast([Y])$ if there is a surjective and universally injective morphism $X\to Y$ of schemes over $\mathbb{F}_q$.
\item $H^{\ast}([X\times_{\mathbb{F}_q} Y])=H^\ast([X])\cdot H^\ast([Y])$ where $X$, $Y$ are schemes over $\mathbb{F}_q$.
\end{enumerate}
The first relation holds by the long exact sequence of the cohomology, see \cite[Expos\'e XVII, (5.1.16.3)]{Del}. For the second relation, by Lemma \ref{homeomorphismlemma}, we know there is a stratification by universal homeomorphisms. Then by \cite[Tag03SI]{SP}, a universal homeomorphism induces an isomorphism of \'etale sites, we are done. Finally the third relation holds by K\"unneth theorem, see \cite[Expos\'e XVII, Th\'eor\`eme 5.4.3]{Del} and \cite[Page 24]{Kat}.

\emph{Step 2.} Note that $H^\ast(\mathbb{L})=[\overline{\mathbb{Q}}_{\ell},q]$. Since $\mathcal{L}\coloneqq[\overline{\mathbb{Q}}_{\ell},q]$ is invertible in $R_q$, we can extend $H^{\ast}$ to $\Mot_{sch}(\mathbb{F}_q)[\mathbb{L}^{-1}]$. We know that the eigenvalues of the Frobenius acting on $i$-th cohomology group are $q$-Weil numbers of weights bounded by $i$ by \cite[Corollaire 3.3.4]{Del3}. Also $H^i_c(\overline{X},\overline{\mathbb{Q}}_{\ell})=0$ for $i>2\dim X$, then we have
$H^{\ast}([X])\in\bigoplus_{i\le 2\dim X}R_{q,i}$.

We see that the homomorphism 
\[H^\ast\colon \Mot_{sch}(\mathbb{F}_q)[\mathbb{L}^{-1}]\to R_q\] 
takes $F^m\Mot_{sch}(\mathbb{F}_q)[\mathcal{L}^{-1}]$ to $F^{2m}R_q$, so it is continuous and can be extended to the completion $\overline{\Mot}_{sch}(\mathbb{F}_q)$. Finally 
$\overline{\Mot}_{sch}(\mathbb{F}_q)\cong\overline{\Mot}(\mathbb{F}_q)$ by Proposition \ref{completioniso}.
\end{proof}
Recall that $\mathcal{L}\coloneqq[\overline{\mathbb{Q}}_{\ell},q]$ is invertible in $R_q$. We define a localization of $R_q$ as \[R_{q,fin-type}\coloneqq R_q[(\mathcal{L}^{i}-1)^{-1}\vert i>0].\]
Since $\mathcal{L}^i-1$ is invertible in $\overline{R}_q$, the homomorphism $R_q\to \overline{R}_q$ factors through $R_{q,fin-type}$. Note that $R_q\to\overline{R}_q$ is injective, so the induced morphism $R_{q,fin-type}\to\overline{R}_q$ is injective as well.

We also define a map $\tr\colon R_q\to\overline{\mathbb{Q}}_{\ell}\text{ given by }[V,\psi]\mapsto\tr\psi.$
We claim that it is a ring homomorphism. It suffices to check for the basis $[\mathcal{E}_\lambda], [\mathcal{E}_\mu]\in R_{q}$:
\[\tr\left([\mathcal{E}_\lambda][\mathcal{E}_\mu]\right)=\tr\left([\mathcal{E}_{\lambda\mu}]\right)=\lambda\mu=\tr\left([\mathcal{E}_\lambda]\right)\cdot\tr\left([\mathcal{E}_\mu]\right).\]
Note that the homomorphism $\tr$ takes $\mathcal{L}$ to $q$ and $\mathcal{L}^i-1$ to $q^i-1\neq 0$, so by the universal property of localizations, it extends to a ring homomorphism 
$\tr\colon R_{q,fin-type}\to\overline{\mathbb{Q}}_{\ell}.$
\begin{Prop}\label{countfintype}
We have the following commutative diagram:
\begin{equation}\label{Ql}
\begin{tikzcd}
\overline{\Mot}(\mathbb{F}_q)\arrow{r}{H^\ast} & \overline{R}_q \\
\Mot(\mathbb{F}_q)\arrow{u}\arrow{d}{\#}\arrow{r}&    R_{q,fin-type}\arrow{d}{\tr}\arrow[hookrightarrow]{u}\\
\mathbb{Q}\arrow[hookrightarrow]{r} & \overline{\mathbb{Q}}_{\ell}.
\end{tikzcd}
\end{equation}
Note that $\#$ is a homomorphism of Theorem \ref{countingstack}.
\end{Prop}
\begin{proof}
By construction, the image of 
\[\Mot(\mathbb{F}_q)\to\overline{\Mot}(\mathbb{F}_q)\xlongrightarrow{H^{\ast}}\overline{R}_q\]
is contained in $R_{q,fin-type}$. Thus the upper square of diagram (\ref{Ql}) commutes.

We show that the lower square commutes. Recall the Grothendieck-Lefschetz trace formula (see \cite[Chapitre 2, Th\'eor\`eme~3.2]{Del1}) for a scheme $X$ over~$\mathbb{F}_q$:
\[\vert X(\mathbb{F}_q)\vert=\sum_{i}(-1)^i\tr F_q\vert_{H^i_c(\overline{X},\overline{\mathbb{Q}}_{\ell})}.\]
Thus the counting measure factors through $R_q$, that is, we have a commutative diagram:
\[
\begin{tikzcd}
\Mot_{sch}(\mathbb{F}_q)\arrow{d}{\#}\arrow{r}{H^{\ast}}  & R_q\arrow{d}{\tr}\\
\mathbb{Z}\arrow[hookrightarrow]{r} & \overline{\mathbb{Q}}_{\ell}.
\end{tikzcd}
\]
Recall that $\tr(\mathcal{L})=q$, then by Theorem \ref{motequal} and the universal property of localization, we see that the lower square of diagram (\ref{Ql}) commutes.
\end{proof}
Recall that in Section \ref{lambdaring} we defined a $\lambda$-ring structure on $R_q$. The $\lambda$-ring structure on $R_q$ can be extended to $\overline{R}_q$ by continuity. We also know that $\overline{\Mot}(\mathbb{F}_q)$ is a pre-$\lambda$-ring by Theorem \ref{prestructure}.
\begin{Prop}\label{conj}
The homomorphism $H^{\ast}\colon \overline{\Mot}(\mathbb{F}_q)\to\overline{R}_q$ in Proposition \ref{Hast} is a homomorphism of pre-$\lambda$-rings.
\end{Prop}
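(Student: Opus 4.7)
The plan is to check the pre-$\lambda$ compatibility $H^{\ast}(\lambda_A(z)) = \lambda_{H^{\ast}(A)}(z)$ on a dense generating set and extend by continuity. Both $H^{\ast}$ (Proposition \ref{Hast}) and the pre-$\lambda$ structures on source and target are continuous (Theorem \ref{prestructure}, and the remark immediately preceding this proposition), and since $\lambda$ is a homomorphism of additive groups, compatibility is preserved under sums. Combining Theorem \ref{motequal} and Lemma \ref{qpr}, it therefore suffices to treat the case $A = [X]$ where $X$ is a reduced quasi-projective $\mathbb{F}_q$-scheme; the extension to $\mathbb{L}^{-1}$ and $(\mathbb{L}^i-1)^{-1}$ then follows from Lemma \ref{prelambdaex} once one verifies the analogous identity $\lambda_{L \cdot r}(z) = \lambda_r(L z)$ in $R_q$, which reduces to basis elements $[E_{\mu}]$ and follows from $[E_{q\mu}] = L \cdot [E_\mu]$ together with $\psi_n(L) = L^n$.

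For such $X$, Theorem \ref{prestructure} gives $\lambda_{[X]}(z) = (\zeta_X(-z))^{-1}$, so applying $H^{\ast}$ and inverting, the desired identity becomes
\[
H^{\ast}(\zeta_X(-z)) \;=\; \sum_{n \ge 0} H^{\ast}([Sym^n X]) (-z)^n \;=\; \lambda_{-H^{\ast}([X])}(z).
\]
Let $V = V_+ \oplus V_-$ be the super object of $J_q$ with $V_+ = \bigoplus_i H^{2i}_c(\overline{X}, \overline{\mathbb{Q}}_{\ell})$ and $V_- = \bigoplus_i H^{2i+1}_c(\overline{X}, \overline{\mathbb{Q}}_{\ell})$, both equipped with the Frobenius $F_q$; then $[V] = [V_+] - [V_-] = H^{\ast}([X])$ in $R_q$, and Lemma \ref{Jq} identifies the right-hand side with $\sum_{n \ge 0} [Sym^n V] (-z)^n$. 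Matching coefficients of $z^n$, the proposition reduces to the single identity
\[
H^{\ast}([Sym^n X]) \;=\; [Sym^n V] \quad \text{in } R_q \quad \text{for every } n \ge 0,
\]
where the right-hand side is the super-symmetric power of Section \ref{lambdaring}.

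This last identity is the crux of the proof, and I would establish it in two Frobenius-equivariant steps. First, the finite quotient morphism $\pi \colon \overline{X}^n \to Sym^n \overline{X}$ has degree $n!$, which is invertible in $\overline{\mathbb{Q}}_{\ell}$, so $R\pi_{\ast} \overline{\mathbb{Q}}_{\ell}$ decomposes into $S_n$-isotypic components and one obtains a canonical Frobenius-equivariant isomorphism $H^{\ast}_c(Sym^n \overline{X}, \overline{\mathbb{Q}}_{\ell}) \cong H^{\ast}_c(\overline{X}^n, \overline{\mathbb{Q}}_{\ell})^{S_n}$. Second, the $\ell$-adic K\"unneth formula yields a Frobenius-equivariant isomorphism $H^{\ast}_c(\overline{X}^n, \overline{\mathbb{Q}}_{\ell}) \cong V^{\otimes n}$, where $S_n$ acts by the graded permutation action with Koszul signs; a linear-algebra computation then identifies $(V^{\otimes n})^{S_n}$ with the super-symmetric power $Sym^n V$. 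The main obstacle is tracking the Koszul signs in this K\"unneth/$S_n$-invariants step so that they match the super convention used in Lemma \ref{Jq}: in characteristic zero this is the classical Macdonald formula, and in our positive-characteristic setting with $\ell$ distinct from the residue characteristic it follows from the standard $\ell$-adic K\"unneth theorem for compactly supported cohomology combined with the fact that invariants under a finite group of order prime to $\ell$ commute with tensor products and with passage to cohomology.
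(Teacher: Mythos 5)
Your proof follows essentially the same route as the paper's: reduce to $A=[X]$ with $X$ a reduced quasi-projective scheme, use Lemma \ref{Jq} to rewrite $\lambda_{-H^{\ast}([X])}(z)$ as the super-symmetric-power series $\sum_n [Sym^n V](-z)^n$, and conclude from the key identity $H^{\ast}([Sym^n X]) = Sym^n H^{\ast}([X])$ in $R_q$ together with continuity. The only point of divergence is in how that key identity is established: the paper obtains it by directly citing Deligne's theorem (SGA~4, Expos\'e XVII, Th\'eor\`eme 5.5.21), whereas you sketch a proof of it from the $S_n$-isotypic decomposition of $R\pi_{\ast}\overline{\mathbb{Q}}_{\ell}$ for the finite quotient $\pi\colon \overline{X}^n \to Sym^n\overline{X}$ combined with the $\ell$-adic K\"unneth formula---this is in substance a proof of the theorem the paper cites, so the mathematical content matches; the paper's citation is shorter, while your argument also makes explicit the compatibility $\lambda_{L\cdot r}(z)=\lambda_r(Lz)$ in $R_q$ needed to pass through the $\mathbb{L}^{-1}$-localization, which the paper subsumes in ``by continuity''.
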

\begin{proof}
Let $X$ be a reduced quasi-projective scheme over the finite field $\mathbb{F}_q$. Denote by $\lambda_H$ the $\lambda$-ring structure on $\overline{R}_{q}$. First we check that
\[\lambda_H(-H^\ast([X]))=H^{\ast}(\lambda(-[X])).\]
Set $H^\ast(X)\coloneqq\bigoplus_i (H^i_c(\overline{X},\overline{\mathbb{Q}}_{\ell}),F_q)$ and view it as a super object of $J_q$, so we have $[H^\ast(X)]\in R_q$.

By Lemma \ref{Jq},
\[\lambda_H(-H^\ast([X]))=\sum_i[Sym^i(H^\ast(X))](-t)^i.\]
On the other hand, we have
\[H^\ast(\lambda(-[X]))=\sum_iH^\ast([Sym^iX])(-t)^i=\sum_i[H^\ast(Sym^iX)](-t)^i.\]
However by Deligne's theorem (see \cite[Expos\'e XVII, Th\'eor\`eme 5.5.21]{Del}),
\[Sym^i(H^\ast(X))\simeq H^{\ast}(Sym^iX).\]
Since $\overline{\Mot}(\mathbb{F}_q)$ is topologically generated by classes $\mathbb{L}^{-i}[X]$, the statement follows.
\end{proof}
\begin{proof}[Proof of part (ii) of Theorem \ref{countingstack}]
By Proposition \ref{conj}, $H^\ast$ factors as 
\[\overline{\Mot}(\mathbb{F}_q)\to\overline{\Mot}^{\lambda-ring}(\mathbb{F}_q)\xlongrightarrow{H^{\ast,\lambda-ring}}\overline{R}_q.\]
Combining the above with Proposition \ref{countfintype}, we get a commutative diagram:
\[\begin{tikzcd}
\overline{\Mot}(\mathbb{F}_q)\arrow{r} & \overline{\Mot}^{\lambda-ring}(\mathbb{F}_q)\arrow{r}{H^{\ast,\lambda-ring}}& \overline{R}_q \\
\Mot(\mathbb{F}_q)\arrow{u}\arrow{d}{\#}\arrow{rr} & & R_{q,fin-type}\arrow{d}{\tr}\arrow[hookrightarrow]{u}\\
\mathbb{Q}\arrow[hookrightarrow]{rr} & & \overline{\mathbb{Q}}_{\ell}.
\end{tikzcd}
\]
Now the statement follows by diagram chasing.
\end{proof}
\section{Power structures}\label{realsection3}
Our goal in this section is to define a power structure on $\overline{\Mot}(k)$ and interpret certain powers as configuration spaces, see Proposition \ref{powerstructure} and Theorem \ref{power}. These are similar to \cite[Definition 1]{GZLMH2} and \cite[Lemma 6]{BM} respectively.
\subsection{From pre-$\lambda$-rings to plethystic exponents and plethystic logarithms}\label{section23}
In this section, for an arbitrary pre-$\lambda$-ring, as a generalization of \cite[Remark 2, Page 55]{GZLMH1} we define the multivariable generalizations of the plethystic exponents and plethystic logarithms.

Let $R$ be a pre-$\lambda$-ring with the pre-$\lambda$-ring structure $a\mapsto\lambda_a(z)$. We denote by $R[[z_1,z_2,\ldots,z_n]]^{+}$ the ideal of power series with vanishing constant term and denote by $(1+R[[z_1,z_2,\ldots,z_n]]^{+})^{\times}$ the multiplicative group of series with constant term equal to $1$, we define the plethystic exponent \[\Exp\colon R[[z_1,z_2,\ldots,z_n]]^{+}\to(1+R[[z_1,z_2,\ldots,z_n]]^{+})^{\times}\]
by
\[\Exp\left(\sum_{r_1,r_2,\ldots,r_n}A_{r_1,r_2,\ldots,r_n}\prod_{i=1}^nz_i^{r_i}\right)\coloneqq\prod_{r_1,r_2,\ldots,r_n}\lambda^{-1}_{A_{r_1,r_2,\ldots,r_n}}(-\prod_{i=1}^nz_i^{r_i}).\]
\begin{Lemma}
$\Exp$ is an isomorphism of abelian groups.
\end{Lemma}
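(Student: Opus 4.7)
The plan is to verify three things in order: (i) the product defining $\Exp$ converges in the natural topology; (ii) $\Exp$ is a group homomorphism; and (iii) $\Exp$ is bijective.

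For (i), equip $A[[z_1,\ldots,z_n]]$ with the $(z_1,\ldots,z_n)$-adic topology. Because $\lambda_a(t)\equiv 1+at\ (\mathrm{mod}\ t^2)$, the factor $\lambda^{-1}_{A_{r_1,\ldots,r_n}}(-\prod_i z_i^{r_i})$ lies in $1+(\prod_i z_i^{r_i})\cdot A[[z_1,\ldots,z_n]]$. Since only finitely many multi-indices $(r_1,\ldots,r_n)$ with $r_1+\cdots+r_n\le N$ exist, modulo $(z_1,\ldots,z_n)^{N+1}$ all but finitely many factors equal $1$, so the product converges.

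For (ii), writing $f=\sum A_{r_1,\ldots,r_n}\prod_i z_i^{r_i}$ and $g=\sum B_{r_1,\ldots,r_n}\prod_i z_i^{r_i}$, the pre-$\lambda$ property $\lambda_{a+b}(t)=\lambda_a(t)\lambda_b(t)$ gives $\lambda^{-1}_{A_{r_1,\ldots,r_n}+B_{r_1,\ldots,r_n}}(t)=\lambda^{-1}_{A_{r_1,\ldots,r_n}}(t)\lambda^{-1}_{B_{r_1,\ldots,r_n}}(t)$. Substituting $t=-\prod_i z_i^{r_i}$ and taking the product over all $(r_1,\ldots,r_n)$ (with the convergence from step (i) justifying the rearrangement) yields $\Exp(f+g)=\Exp(f)\cdot\Exp(g)$.

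For (iii), I would proceed by induction on total degree. The key observation is that the coefficient of a monomial $\prod_i z_i^{s_i}$ in $\Exp(f)$ equals $A_{s_1,\ldots,s_n}+P_{s_1,\ldots,s_n}$, where $P_{s_1,\ldots,s_n}$ is a universal polynomial (in the $\lambda_j$-operations and the ring operations of $A$) in the coefficients $A_{r_1,\ldots,r_n}$ with $r_1+\cdots+r_n<s_1+\cdots+s_n$. Indeed, the factor indexed by $(r_1,\ldots,r_n)$ contributes only monomials that are multiples of $\prod_i z_i^{r_i}$, so factors with $\sum_i r_i>\sum_i s_i$ contribute nothing to $\prod_i z_i^{s_i}$, and the factor indexed by $(s_1,\ldots,s_n)$ contributes $A_{s_1,\ldots,s_n}$ to the coefficient (from its linear term) plus higher-degree junk. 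Given $F\in 1+A[[z_1,\ldots,z_n]]^{+}$, this recursion uniquely determines the $A_{s_1,\ldots,s_n}$ by total degree, defining $\Log(F)=f$ with $\Exp(f)=F$. Both injectivity and surjectivity follow.

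The only mildly delicate point will be the bookkeeping in (iii): one must confirm both that factors indexed by $(r_1,\ldots,r_n)$ of large total degree cannot contribute to lower-degree monomials, and that the contribution of the factor indexed exactly by $(s_1,\ldots,s_n)$ to the coefficient of $\prod_i z_i^{s_i}$ is simply $A_{s_1,\ldots,s_n}$. Both follow directly from $\lambda^{-1}_a(-t)\equiv 1+at\ (\mathrm{mod}\ t^2)$, so no serious obstacle arises.
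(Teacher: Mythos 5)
Your proof is correct and follows essentially the same route as the paper: both work with the filtration of $A[[z_1,\ldots,z_n]]^{+}$ by total degree and reduce bijectivity to the observation that $\Exp$ acts as the identity on the lowest-degree part (your induction on total degree is the same argument as the paper's associated-graded reduction). You are slightly more explicit than the paper about verifying convergence and the homomorphism property $\Exp(f+g)=\Exp(f)\Exp(g)$, but these are minor bookkeeping points that the paper implicitly takes for granted.
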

\begin{proof}
\emph{Step 1.} For $n>0$, let $A_m$ denote the ideal of $R[[z_1,z_2,\ldots,z_n]]$ generated by monomials of degree at least $m$, then $R[[z_1,z_2,\ldots,z_n]]^{+}=A_1$ and $\Exp(A_m)\subset 1+A_m$. Thus, $\Exp$ induces a homomorphism
\[A_m/A_{m+1}\to(1+A_m)^{\times}/(1+A_{m+1})^{\times}.\]
It is enough to show that this is an isomorphism for any $m$.

\emph{Step 2.} Note that for
\begin{align*}
A&=\sum_{\sum r_i= m}A_{r_1,\ldots,r_n}\prod_{i=1}^nz_i^{r_i}+\text{monomials of degree higher than } m,
\intertext{we have}
\Exp\left(A\right)&=\left(\prod_{\sum r_i= m}\lambda_{A_{r_1,\ldots,r_n}}(\prod_{i=1}^nz_i^{r_i})\right)\left(1+\text{monomials of degree higher than } m\right)\\
&=\prod_{\sum r_i= m}(1+A_{r_1,\ldots,r_n}\prod_{i=1}^nz_i^{r_i}+\cdots)\left(1+\text{monomials of degree higher than } m\right)
\\&=1+\sum_{\sum r_i= m}A_{r_1,\ldots,r_n}\prod_{i=1}^nz_i^{r_i}+\text{monomials of degree higher than } m.
\end{align*}
Now the statement is clear.
\end{proof}
Since $\Exp$ is an isomorphism, we have the inverse isomorphism, which we call plethystic logarithm:
$\Log\coloneqq\Exp^{-1}$. In the next proposition, we verify the basic properties for them.
\begin{Prop}\label{exp}
With the notation $A(z)\coloneqq A(z_1,z_2,\ldots,z_n)$, we have:
\begin{align*}
\Exp(A(z)+B(z))&=\Exp(A(z))\Exp(B(z));\\ \Log(M(z)\cdot N(z))&=\Log(M(z))+\Log(N(z)).   
\end{align*}
\end{Prop}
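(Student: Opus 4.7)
The plan is to reduce both statements to the defining homomorphism property of the pre-$\lambda$ structure, namely $\lambda_{a+b}(t)=\lambda_a(t)\lambda_b(t)$. Everything else is bookkeeping in the $(z_1,\dots,z_n)$-adic topology on $R[[z_1,\dots,z_n]]$.

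First I would write out the defining formula with explicit coefficients. For
\[
A(z)=\sum_{\vec r} a_{\vec r}\,\vec z^{\vec r},\qquad B(z)=\sum_{\vec r} b_{\vec r}\,\vec z^{\vec r}\in A[[z_1,\dots,z_n]]^+,
\]
the definition gives
\[
\Exp\bigl(A(z)+B(z)\bigr)=\prod_{\vec r}\lambda^{-1}_{a_{\vec r}+b_{\vec r}}\bigl(-\vec z^{\vec r}\bigr).
\]
Each factor $\lambda^{-1}_{a_{\vec r}+b_{\vec r}}(-\vec z^{\vec r})$ lies in $1+R[[z_1,\dots,z_n]]^+$ and, more precisely, differs from $1$ only in terms of total $\vec z$-degree $\ge|\vec r|$; hence the infinite product converges in the $(z_1,\dots,z_n)$-adic topology and commutes freely. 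The key observation is that since $\lambda\colon R\to(1+tR[[t]])^\times$ is a homomorphism of abelian groups,
\[
\lambda^{-1}_{a+b}(t)=\lambda^{-1}_a(t)\,\lambda^{-1}_b(t).
\]
Substituting $t=-\vec z^{\vec r}$ and distributing the product over~$\vec r$ (legitimate by the convergence remark above) immediately yields
\[
\prod_{\vec r}\lambda^{-1}_{a_{\vec r}+b_{\vec r}}(-\vec z^{\vec r})=\Bigl(\prod_{\vec r}\lambda^{-1}_{a_{\vec r}}(-\vec z^{\vec r})\Bigr)\Bigl(\prod_{\vec r}\lambda^{-1}_{b_{\vec r}}(-\vec z^{\vec r})\Bigr)=\Exp(A(z))\,\Exp(B(z)).
\]

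For the second identity I would deduce it formally from the first together with the fact, proved in the preceding lemma, that $\Exp$ is an isomorphism with inverse $\Log$. Given $M(z),N(z)\in(1+A[[z_1,\dots,z_n]]^+)^\times$, set $A(z)\coloneqq\Log M(z)$ and $B(z)\coloneqq\Log N(z)$, so $M(z)=\Exp A(z)$ and $N(z)=\Exp B(z)$. Applying the first identity,
\[
M(z)\cdot N(z)=\Exp(A(z))\,\Exp(B(z))=\Exp\bigl(A(z)+B(z)\bigr),
\]
and then applying $\Log=\Exp^{-1}$ to both sides gives $\Log(M(z)\cdot N(z))=A(z)+B(z)=\Log M(z)+\Log N(z)$, as required.

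The only genuinely delicate point is making sure the infinite product rearrangement in the first step is rigorous; this is the main (and essentially only) obstacle. I would handle it by the same truncation argument already used in the preceding lemma: work modulo the ideal $A_m$ of monomials of total degree $\ge m$, where only finitely many factors contribute nontrivially, apply the homomorphism property of $\lambda$ for that finite product, and then pass to the inverse limit as $m\to\infty$. Once this is spelled out, both identities follow with no further input.
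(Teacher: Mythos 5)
Your proof is correct and follows essentially the same route as the paper: the first identity is a direct consequence of the definition of $\Exp$ together with the homomorphism property $\lambda_{a+b}=\lambda_a\lambda_b$ of the pre-$\lambda$ structure, and the second follows formally from the first because $\Log=\Exp^{-1}$. The paper leaves all the bookkeeping implicit, so your explicit treatment of the $(z_1,\dots,z_n)$-adic convergence of the infinite product is just a filled-in version of the same argument.
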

\begin{proof}
The first property follows from the definitions of $\Exp$ and of pre-$\lambda$-ring structures. The second property holds since $\Log$ is the inverse of $\Exp$.
\end{proof}
\subsection{Definition of the power structure}\label{section24}
In this section, we give the definition of the power structure and explain how to obtain a power structure from a pre-$\lambda$-ring structure. This is a generalization of \cite[Definition 1]{GZLMH2} to the multivariable case.
\begin{Def}\label{pow}
A power structure over a ring $R$ is a map 
\[(1+ R[[z_1,z_2,\ldots,z_n]]^+)\times R\to 1+R[[z_1,z_2,\ldots,z_n]]^+\]
\[ \quad\quad(A(z),m)\mapsto(A(z))^m\]
which possesses the following properties:
\begin{enumerate}[(i)]
\item $(A(z))^0=1$.
\item $(A(z))^1=A(z)$.
\item $(A(z)\cdot B(z))^m=(A(z))^m\cdot(B(z))^m$.
\item $(A(z))^{m_1+m_2}=(A(z))^{m_1}\cdot(A(z))^{m_2}$.
\item $(A(z))^{m_1m_2}=((A(z))^{m_2})^{m_1}$.
\item $(1+\sum_{i}z_i)^m=1+\sum_{i}mz_i+\text{terms of degree at least }2$.
\item $(A(I(z)))^m=(A(z))^m\vert_{z\mapsto I(z)}$ where \[I(z_1,z_2,\ldots,z_n)=\left(\prod_{i=1}^nz_i^{r_{i,1}},\ldots,\prod_{i=1}^nz_i^{r_{i,n}}\right)\text{ with }r_{i,j}\text{ being nonnegative integers}.\]
\end{enumerate}
\end{Def}
\begin{Prop}\label{powerstructure}
Let $R$ be a pre-$\lambda$-ring and let $\Exp\colon R[[z_1,z_2,\ldots,z_n]]^{+}\to(1+R[[z_1,z_2,\ldots,z_n]]^{+})^{\times}$ and $\Log\colon (1+R[[z_1,z_2,\ldots,z_n]]^{+})^{\times}\to R[[z_1,z_2,\ldots,z_n]]^{+}$ be the corresponding plethystic operations (see Section \ref{section23}). Then 
\[(A(z),m)\mapsto\Exp(m\Log(A(z)))\] gives a power structure on $R$.
\end{Prop}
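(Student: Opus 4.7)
The plan is to verify each of the seven axioms in Definition \ref{pow} directly from the formula $(A(z))^m \coloneqq \Exp(m\,\Log(A(z)))$, using the group-isomorphism properties of $\Exp$ and $\Log$ established in Section \ref{section23} (Proposition \ref{exp}), together with the structural behavior of $\Exp$ at lowest degree (Step 2 of the proof that $\Exp$ is an isomorphism). Axioms (i)--(v) will be essentially formal; axiom (vi) requires identifying the lowest-degree term of $\Log(1+\sum_i z_i)$; axiom (vii) is the substitution property and is the one real calculation.

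For (i)--(v) I would argue as follows. Axiom (i) is $\Exp(0\cdot\Log A)=\Exp(0)=1$, and (ii) is $\Exp(\Log A)=A$ since $\Log$ and $\Exp$ are inverse by definition. For (iii), Proposition \ref{exp} gives $\Log(A\cdot B)=\Log A+\Log B$, hence
\[
(A\cdot B)^m=\Exp\bigl(m\Log A+m\Log B\bigr)=\Exp(m\Log A)\cdot\Exp(m\Log B)=A^m\cdot B^m.
\]
Axiom (iv) is identical, using $\Exp(m_1\Log A+m_2\Log A)=\Exp(m_1\Log A)\Exp(m_2\Log A)$. For (v), apply $\Log\circ\Exp=\Id$ to the inner factor:
\[
(A^{m_2})^{m_1}=\Exp\bigl(m_1\Log\Exp(m_2\Log A)\bigr)=\Exp(m_1m_2\Log A)=A^{m_1m_2}.
\]

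For (vi), the point is the leading-order behavior. By Step 2 of the proof that $\Exp$ is an isomorphism, one sees that for any $B\in R[[z_1,\dots,z_n]]^+$ whose lowest-degree homogeneous part is $B_{\mathrm{low}}$, the lowest-degree part of $\Exp(B)-1$ equals $B_{\mathrm{low}}$. Applying this to $B=\Log(1+\sum_i z_i)$ and using $\Exp(B)=1+\sum_i z_i$, one deduces $\Log(1+\sum_i z_i)=\sum_i z_i+(\text{terms of total degree}\ge 2)$. Multiplying by $m$ and applying $\Exp$ again with the same leading-term observation yields $(1+\sum_i z_i)^m=1+m\sum_i z_i+(\text{higher order})$, as required.

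The main obstacle, and the one place where a small unwinding of definitions is needed, is axiom (vii). The plan is to check directly from the formula
\[
\Exp\Bigl(\sum_{(r_1,\dots,r_n)} B_{r_1,\dots,r_n}\prod_i z_i^{r_i}\Bigr)=\prod_{(r_1,\dots,r_n)}\lambda^{-1}_{B_{r_1,\dots,r_n}}\bigl(-\prod_i z_i^{r_i}\bigr)
\]
that the monomial substitution $z_j\mapsto I_j(z)=\prod_i z_i^{r_{i,j}}$ (which is continuous in the $(z_1,\dots,z_n)$-adic topology provided each $I_j$ lies in the augmentation ideal) commutes with $\Exp$. Indeed, substituting on the left transforms $\prod_j z_j^{r_j}$ into $\prod_i z_i^{\sum_j r_{i,j}r_j}$, while substituting on the right inside each $\lambda^{-1}_{B_{r_1,\dots,r_n}}(-\,\cdot\,)$ produces exactly the same monomial; since each $\lambda^{-1}$ depends only on this monomial argument, the two sides agree term by term. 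Passing to $\Log$ by inversion then gives $\Log(A\circ I)=(\Log A)\circ I$, so
\[
(A(I(z)))^m=\Exp\bigl(m\Log(A\circ I)\bigr)=\Exp\bigl((m\Log A)\circ I\bigr)=(\Exp(m\Log A))\circ I=(A(z))^m\big|_{z\mapsto I(z)},
\]
which is (vii). This completes the verification of all seven axioms.
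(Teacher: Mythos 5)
Your proof is correct and follows essentially the same approach as the paper: axioms (i)--(v) formally from $\Exp$ and $\Log$ being inverse group isomorphisms, (vi) by tracking the lowest-degree homogeneous part through $\Exp$, and (vii) from the fact that $\Exp$ commutes with monomial substitutions. You give more detail than the paper on (vii) (where the paper merely states the commutation), which is a welcome addition, though a fully rigorous treatment of the case where the substitution collapses distinct exponent tuples would also invoke the homomorphism axiom $\lambda_{a+b}=\lambda_a\lambda_b$ of the pre-$\lambda$ structure.
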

\begin{proof}
The properties (i), (ii) and (v) follow from the definitions, while properties (iii) and (iv) follow from Proposition \ref{exp}. The property (vii) is true because $\Exp$ commutes with monomial substitutions. Now it remains to prove property (vi). It follows from the definition of $\Exp$ and of pre-$\lambda$-ring structure that
\begin{align*}
&\Exp\left(\sum_{i}A_iz_i+\text{terms of degree at least }2\right)\\
=&1+\sum_iA_iz_i+\text{terms of degree at least } 2. 
\end{align*}
It follows easily that 
\[\Log(1+\sum_{i}M_iz_i)=\sum_{i}M_iz_i+\text{terms of degree at least }2.\]
Finally we have
\begin{align*}
\Exp\left(m\Log\left(1+\sum_{i}z_i\right)\right)&=\Exp\left(m\left(\sum_{i}z_i+\text{terms of degree at least } 2\right)\right)\\
&=1+\sum_{i}mz_i+\text{terms of degree at least } 2.\end{align*}
It is indeed a power structure.
\end{proof}
\begin{Rem}
The power structures corresponding to different pre-$\lambda$-ring structures might coincide. As in \cite[Section 3]{GZLMH2}, the power structures defined by motivic zeta-function
\[\zeta_M(z)=1+[S^1M]\cdot z+[S^2M]\cdot z^2+\cdots,\]
and generating series of the configuration space $M_k=(M_k\backslash\Delta)/S_k$ where $M$ is a quasi-projective variety and $\Delta$ is the large diagonal
\[\phi_M(z)=1+[M]\cdot z+[M_1]\cdot z^2+\cdots,\]
are the same.
\end{Rem}
\subsection{Power structures on the rings of motivic classes of schemes and stacks}\label{section25}
In Section \ref{section23} and Section \ref{section24} we dealt with arbitrary pre-$\lambda$-ring structures. Now we apply this formalism to the pre-$\lambda$-ring structure on $\overline{\Mot}(k)$ defined in Section \ref{section22}. Thus, we obtain a power structure on the ring $\overline{\Mot}(k)$. We have a theorem that is a generalization of \cite[Lemma 3.8.2]{FSS1} to arbitrary characteristic.
\begin{Th}\label{power}
Assume that $M$ is a scheme, $\mathcal{A}_0=\Spec k$ and $\mathcal{A}_1,\mathcal{A}_2,\ldots,\mathcal{A}_n,\ldots$ are stacks. Put $A(z)=[\mathcal{A}_0]+[\mathcal{A}_1]z+[\mathcal{A}_2]z^2+\cdots+[\mathcal{A}_n]z^n+\ldots$. Then we have \[(A(z))^{[M]}=1+\sum_{k=1}^{\infty}\left\{\sum_{k_i\colon \sum ik_i=k}\left[\left(\left(M^{\sum_i k_i}\backslash\Delta\right)\times\prod_i\mathcal{A}_i^{k_i}\right)/\prod_i{S_{k_i}}\right]\right\}\cdot z^k,\]
where the power structure is given in Proposition \ref{powerstructure} and $\Delta$ is the ``large diagonal'' in $M^{\sum_i k_i}$ which consists of $\left(\sum_i k_i\right)$-tuples of points of $M$ with at least two coinciding ones.
\end{Th}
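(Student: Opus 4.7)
The plan is to verify that the combinatorial right-hand side, which I denote $\Phi_M(A)(t)$, agrees with the plethystic power $\Exp([M]\Log A(t))$ that defines the power structure in Proposition \ref{powerstructure}. The strategy has three steps: reduce via multiplicativity to ``elementary'' factors of the form $\Exp(Ct^r)$, then to the case when $C$ is the class of a reduced quasi-projective scheme, and finally extend to general stack classes by continuity.

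The first step is to prove that $\Phi_M$ is multiplicative in its first argument: $\Phi_M(A\cdot B)(t)=\Phi_M(A)(t)\cdot\Phi_M(B)(t)$. Geometrically, a configuration of decorated points in $M$ whose labels come from $(\mathcal{A}\cdot\mathcal{B})_n=\bigsqcup_{i+j=n}\mathcal{A}_i\times\mathcal{B}_j$ partitions uniquely into two sub-configurations according to whether each decoration is of $\mathcal{A}$-type or $\mathcal{B}$-type, and the quotient by $\prod S_{k_i}$ splits compatibly after a stratification of $(M^{\sum k_i}\setminus\Delta)$ into these loci, in the spirit of Lemma \ref{G-invariant} and Proposition \ref{zeta}(i). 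Granted this, write $\Log A(t)=\sum_{r\ge 1}D_r t^r$; then $A(t)=\prod_r\Exp(D_r t^r)$, so $\Phi_M(A)=\prod_r\Phi_M(\Exp(D_r t^r))$, while $\Exp([M]\Log A(t))=\prod_r\Exp([M]D_r t^r)$. Thus it suffices to prove $\Phi_M(\Exp(C t^r))=\Exp([M]C t^r)$ for every class $C\in\overline{\Mot}(k)$ and every $r\ge 1$.

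For the second step, the monomial substitution $t\mapsto t^r$ commutes with both operations, so I may assume $r=1$. For $C=[Y]$ with $Y$ a reduced quasi-projective scheme, $\Exp([Y]t)=\zeta_Y(t)$ and $\Exp([M\times Y]t)=\zeta_{M\times Y}(t)$, so the identity becomes $\Phi_M(\zeta_Y(t))=\zeta_{M\times Y}(t)$. This is precisely the scheme-level formula of Gusein-Zade, Luengo and Melle-Hern\'andez referenced in the Remark following Proposition \ref{powerstructure}: it follows from the stratification of $\Sym^k(M\times Y)$ according to the multiplicities of its image partition on $\Sym^k M$, where the fiber over a configuration with $m_j$ distinct $M$-points each of multiplicity $j$ (so $\sum_j jm_j=k$) is $\prod_j(\Sym^j Y)^{m_j}$ modulo the residual $\prod_j S_{m_j}$-action. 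To extend from scheme coefficients to arbitrary $C\in\overline{\Mot}(k)$, I invoke Proposition \ref{completioniso} to approximate $C$ by classes of the form $[Y]/\mathbb{L}^n$ with $Y$ a scheme; both $\Phi_M$ and $\Exp([M]\cdot(-)\cdot t)$ are continuous in the dimension filtration, so the identity passes to the completion.

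The main obstacle will be the multiplicativity of $\Phi_M$ in its first argument. Its verification requires a careful stratification argument on decorated configuration stacks, and the second relation of Definition \ref{motstacks} (surjective and universally injective morphisms) is what allows identifying the image of a quotient like $(W/\prod S_{k_i})\to M^{\sum k_i}/\prod S_{k_i}$ with the corresponding reduced stratum, exactly as in the proof of Proposition \ref{zeta}(i). Once this combinatorial multiplicativity is established, everything else reduces formally to the scheme case and to continuity.
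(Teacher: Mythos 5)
Your strategy is genuinely different from the paper's, which simply cites the proof of \cite[Lemma 6]{BM} to reduce from stack coefficients to scheme coefficients in one stroke, and then invokes the scheme-level configuration-space formula of \cite{GZLMH1} (made valid over any field by Lemma \ref{G-invariant}). By contrast you propose to establish that the combinatorial assignment $\Phi_M$ is multiplicative, factor $A(t)=\prod_r\Exp(D_r t^r)$ via the coefficients $D_r$ of $\Log A$, reduce to a single elementary factor, and conclude by continuity. This is a reasonable plan, and the issues you flag as ``the main obstacle'' are real, but there is an additional gap you do not mention.

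The elementary factors $\Exp(D_r t^r)$ do not have stack coefficients in general: the $D_r$ are elements of $\overline{\Mot}(k)$ (typically virtual classes, or limits along the dimension filtration), not classes of stacks $[\mathcal{A}_i]$. But $\Phi_M$ is \emph{defined} by a geometric configuration formula involving the actual stacks $\mathcal{A}_i$, namely by quotients $\left(\left(M^{\sum_i k_i}\backslash\Delta\right)\times\prod_i\mathcal{A}_i^{k_i}\right)/\prod_i S_{k_i}$; it is not a ring-theoretic operation that can be fed an arbitrary power series over $\overline{\Mot}(k)$. So the step ``$\Phi_M(A)=\prod_r\Phi_M(\Exp(D_r t^r))$'' is not meaningful as written. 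To make your plan work you would first need to \emph{extend} $\Phi_M$ to a continuous multiplicative homomorphism on $(1+\overline{\Mot}(k)[[t]]^+)^{\times}$ (using multiplicativity on stack series to pass to the generated group, then continuity to pass to the closure), and also argue that such an extension is uniquely determined by its values on the series $\zeta_Y(t^r)$; only then can you compare it to $\Exp([M]\Log(-))$. None of this is spelled out, and the uniqueness/density point is not trivial. The paper avoids the entire issue by performing the stack-to-scheme reduction geometrically (Bryan--Morrison) \emph{before} any factorization, so that everything stays within genuine configuration spaces. If you repair the extension step, your route would be a valid alternative; as written it has a gap at the point where you feed non-effective classes into $\Phi_M$.
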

\begin{proof}
By the proof of \cite[Lemma 6]{BM}, we reduce the stack case to the case where~$\mathcal{A}_i$ are schemes. Writing $B(z)=\Log A(z)$, we just need to check that \[\Exp([M]B(z))=\Exp(B(z))^{[M]}.\] Now, by \cite[Remark 2, Page 55]{GZLMH1}, \[\Exp([M]B(z))=\prod_{k\ge1}(1-z^k)^{-[M]B(z)},\]
and one just needs to use the definition of power structure. Note that \cite{GZLMH1} works with $k=\mathbb{C}$. However, using our Lemma \ref{G-invariant}, we can extend their results to any field $k$.
\end{proof}
\begin{Rem}
Theorem \ref{power} shows that the power structure is effective in the following sense: if $M$ is a scheme and the coefficient of $A(z)$ are motivic classes of stacks, then the coefficients of $(A(z))^{[M]}$ are also motivic classes of stacks. This is not always true if $M$ is a stack and there is a counterexample in \cite[Statement~2]{GZLMH2}:
\[(1+z)^{[\BGL(1)]}=(1+z)^{1/(\mathbb{L}-1)}\]
is not effective since the coefficient at $z^2$ is equal to
\[\frac{-\mathbb{L}^3+\mathbb{L}^2+\mathbb{L}}{[\GL(2)]},\]
where the highest degree of the numerator has negative coefficient.
\end{Rem}
\section{Applications to stacks of Higgs bundles}\label{section4}
\subsection{Moduli stacks of vector bundles and HN-filtrations}\label{section31}
In this section, $k$ is a field of arbitrary characteristic and $X$ is a smooth projective geometrically connected curve over~$k$. Let $\mathcal{B}un_r$ be the stack of vector bundles of rank $r$ (over the fixed curve~$X$). It is an Artin stack locally of finite type, see \cite[Theorem 4.6.2.1]{LMB}. Let $\mathcal{B}un_{r,d}$ be its clopen subset classifying bundles of degree $d$. We call a vector bundle on $X$ semistable if for every non-zero subbundle $F\subset E$ we have
\[\frac{\deg F}{\rk F}\le\frac{\deg E}{\rk E}.\]
The semistability is preserved by field extensions. Indeed, let $k\subset K$ be a field extension, let $\overline{K}$ be an algebraic closure of $K$ and let $\overline{k}$ be the algebraic closure of $k$ in $\overline{K}$. According to \cite[Proposition 3]{Lan}, we have
\[E\text{ is semistable}\Leftrightarrow E_{\overline{k}}\text{ is semistable}\Leftrightarrow E_{\overline{K}}\text{ is semistable}\Leftrightarrow E_K\text{ is semistable}.\]
The number $\deg E/\rk E$ is called the slope of $E$. It is well known that each vector bundle $E$ on $X_K$ possesses a unique filtration 
\[0=E_0\subset E_1\subset\ldots\subset E_t=E.\]
such that for $i=1,\ldots, t$ the sheaf $E_i/E_{i-1}$ is a semistable vector bundle and for $i=1,\ldots, t-1$ the slope of $E_i/E_{i-1}$ is strictly greater than the slope of $E_{i+1}/E_i$ (see \cite[Section 1.3]{HN}). This filtration is called the \emph{Harder-Narasimhan filtration} (or HN-filtration for brevity) on $E$ and the sequence of slopes $\tau_1>\ldots>\tau_t$, where $\tau_i=\deg(E_i/E_{i-1})/\rk(E_i/E_{i-1})$, is called the HN-type of $E$. It follows from \cite[Proposition 3]{Lan} that the HN-filtrations and HN-types are compatible with field extensions.
\begin{Lemma}\label{HN}
\begin{enumerate}[(i)]
\item There is an open substack $\mathcal{B}un_{r,d}^{\ge\tau}\subset\mathcal{B}un_{r,d}$ classifying vector bundles whose HN-type ($\tau_1>\ldots>\tau_t$) satisfies $\tau_t\ge\tau$.
\item A vector bundle $E\in\mathcal{B}un_{r,d}(K)$ is in $\mathcal{B}un_{r,d}^{\ge\tau}(K)$ if and only if there is no surjective morphism of vector bundles $E\to F$ such that the slope of $F$ is less than $\tau$.
\item The stack $\mathcal{B}un_{r,d}^{\ge\tau}$ is of finite type.
\end{enumerate}
\end{Lemma}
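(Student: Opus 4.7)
The plan is to establish (ii) first, since it reformulates membership in $\mathcal{B}un_{r,d}^{\ge\tau}$ in terms of the absence of low-slope quotients, a form well-adapted to moduli-theoretic arguments; parts (i) and (iii) will then follow from properness of Quot schemes and from boundedness respectively. I expect (iii) to be the main technical input.

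For (ii), fix $E\in\mathcal{B}un_{r,d}(K)$ with HN-filtration $0=E_0\subset\ldots\subset E_t=E$ and slopes $\tau_1>\ldots>\tau_t$. The implication ``$\tau_t<\tau$ implies some quotient has slope $<\tau$'' is immediate by taking $F=E/E_{t-1}$, which is semistable of slope $\tau_t$. For the converse, given any surjection $\pi\colon E\twoheadrightarrow F$, I would consider the induced filtration $F_i\coloneqq\pi(E_i)$ on $F$: each nonzero $F_i/F_{i-1}$ is a quotient of the semistable bundle $E_i/E_{i-1}$, hence has slope at least $\tau_i\ge\tau_t$. A short additivity argument with degrees and ranks then gives $\mu(F)\ge\tau_t$, proving the contrapositive.

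For (i), the characterization in (ii) identifies the complement of $\mathcal{B}un_{r,d}^{\ge\tau}$ with the locus of bundles admitting a quotient of slope strictly less than $\tau$. For each pair $(r',d')$ with $1\le r'<r$ and $d'/r'<\tau$, the relative Quot stack parametrizing rank $r'$, degree $d'$ quotients of the universal bundle is proper over $\mathcal{B}un_{r,d}$ by Grothendieck's theorem, so its image is a closed substack. To conclude the union over all such $(r',d')$ is closed, I would work locally on $\mathcal{B}un_{r,d}$: on any quasi-compact open substack the universal bundle is bounded, so only finitely many pairs $(r',d')$ contribute non-empty Quot schemes, and a finite union of closed substacks is closed.

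For (iii), the essential ingredient is Grothendieck--Langer boundedness: vector bundles on $X$ of fixed rank $r$ and degree $d$ whose HN-polygon lies in a fixed bounded region form a bounded family, hence are classified by a finite-type algebraic substack. The assumption $\tau_t\ge\tau$, combined with the identity $d=\sum_i r_i\tau_i$ and $\tau_i\ge\tau_t\ge\tau$, yields the upper bound $\tau_i\le (d-(r-r_i)\tau)/r_i$ for each $i$, while $\tau_i\ge\tau$ gives the lower bound; thus the HN-polygon is constrained to a finite region and the standard boundedness statement applies. This is the step where I expect to lean most heavily on existing literature.
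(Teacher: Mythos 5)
The paper does not give a proof of this lemma beyond the one-line remark ``See \cite[Lemma 3.2.1]{FSS1}. Note that it is assumed that $k$ has characteristic $0$ but this is not used in the proof.'' Your proposal supplies the actual argument, and it is the standard one: (ii) via the HN filtration and the basic slope inequality for quotients of a semistable bundle, (i) via properness of relative Quot schemes and local finiteness, and (iii) via boundedness from an upper bound on $\mu_{\max}$. As far as I can tell this matches the argument underlying the citation, and it is correct.

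One point you should flesh out in (ii): the subquotients $F_i/F_{i-1}$ with $F_i=\pi(E_i)$ need not be locally free (the $F_i$ are subbundles of $F$ since subsheaves of a vector bundle on a smooth curve are torsion-free, but $F_{i-1}$ need not be saturated in $F_i$). The argument still goes through because a coherent quotient of a semistable bundle of slope $\mu$ has degree-to-rank ratio $\ge\mu$ when the rank is positive, and nonnegative degree when the rank is zero; additivity of degree and rank in the full filtration of $F$ then gives $\deg F\ge\tau_t\rk F$. Similarly in (i), the Quot scheme parametrizes coherent (not merely locally free) quotients of rank $r'$ and degree $d'$, so a priori its image could be larger than the set of bundles admitting a \emph{locally free} quotient of slope $<\tau$; but torsion only increases degree, so any such coherent quotient with $d'/r'<\tau$ has a locally free quotient of slope $<\tau$, and the image of the union of these Quot schemes is exactly the complement of $\mathcal{B}un_{r,d}^{\ge\tau}$. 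With these two clarifications the proof is complete.
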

\begin{proof}
See \cite[Lemma 3.2.1]{FSS1}. Note that it is assumed that $k$ has characteristic~$0$ but this is not used in the proof.
\end{proof}
We will be mostly interested in the stack $\mathcal{B}un_{r,d}^{+}\coloneqq\mathcal{B}un_{r,d}^{\ge 0}$. We will call such vector bundles ``HN-nonnegative''. Note that the tensorisation with a line bundle of degree $e$ gives an isomorphism $\mathcal{B}un_{r,d}^{+}\simeq\mathcal{B}un_{r,d+er}^{\ge e}$. By Lemma \ref{HN}, $E$ is HN-nonnegative if and only if there are no surjective morphisms $E\to F$, where $F$ is a vector bundle such that $\deg F<0$.
\subsection{Moduli stacks of Higgs bundles}\label{section41}
All the definitions and notations in this section follow \cite[Section 1.3]{FSS1}.
\begin{Def}
Fix a smooth projective geometrically connected curve $X$ over $k$. 
\begin{enumerate}[(i)]
\item A Higgs bundle on $X$ is a pair $(E,\Phi)$ where $E$ is a vector bundle on $X$ and $\Phi\colon E\to E\otimes\Omega_X$ is an $\mathcal{O}_X$-linear morphism from $E$ to $E$ ``twisted'' by the sheaf of differential $1$-forms $\Omega_X$.
\item The rank of the pair $(E,\Phi)$ is the rank of $E$, similarly the degree is the degree of $E$.
\end{enumerate}
\end{Def}
We use $\mathcal{H}iggs_{r,d}$ to denote the moduli stack of rank $r$ degree $d$ Higgs bundles on~$X$. This is an Artin stack locally of finite type over $k$ by a simple argument similar to the proof of \cite[Proposition 1]{Fed}. The forgetful morphism $\mathcal{H}iggs_{r,d}\to\mathcal{B}un_{r,d}$ sending $(E,\Phi)\to E$, is schematic and of finite type. Set
\[\mathcal{H}iggs^{+}_{r,d}\coloneqq\mathcal{H}iggs_{r,d}\times_{\mathcal{B}un_{r,d}}\mathcal{B}un^{+}_{r,d},\]
by Lemma \ref{HN}, it is an open substack of finite type of $\mathcal{H}iggs_{r,d}$.
\begin{Def}
The Higgs bundle $(E,\Phi)$ is called semistable if for any non-zero subbundle $F\subset E$ preserved by $\Phi$,
\[\frac{\deg F}{\rk F}\le\frac{\deg E}{\rk E}.\]
\end{Def}
Similarly to the proof of \cite[Proposition 3]{Lan} (see also Section \ref{section31}), this is an open condition compatible with field extensions, so we denote by $\mathcal{H}iggs^{ss}_{r,d}$ the open substack of semistable Higgs bundles. Furthermore, the stack $\mathcal{H}iggs^{ss}_{r,d}$ is of finite type by \cite[Lemma 3.3.2(ii)]{FSS1}.
\subsection{Motivic Hall algebras}\label{section330}
In this section, we keep the assumptions from the previous section, in particular: $X$ is a smooth projective geometrically connected curve of genus $g$ over $k$. In addition we assume that there is a divisor $D$ on $X$ defined over $k$ such that $\deg D=1$. In other words, the degrees of points on $X$ are coprime.

For any stack $\mathcal{X}$, we consider the ring
\[\overline{\Mot}(\mathcal{X})[\sqrt{\mathbb{L}}]\coloneqq\overline{\Mot}(\mathcal{X})[t]/(t^2-\mathbb{L}).\]
Let $\mathcal{C}oh_{r,d}$ be the moduli stack of coherent sheaves on $X$ with rank $r$ and degree $d$. This is a stack locally of finite type over $k$ by \cite[Th\'eor\`eme 4.6.2.1]{LMB}. Set
\[\mathcal{H}'\coloneqq\bigoplus_{r\ge 0,  d\in\mathbb{Z}}\overline{\Mot}(\mathcal{C}oh_{r,d})[\sqrt{\mathbb{L}}].\]
(Note that $\mathcal{C}oh_{0,d}=\emptyset$ if $d<0$.) Similarly to \cite[Section 5]{FSS1}, we define a structure of an alegrba on $\mathcal{H}'$. We let $\mathbb{Z}$ act on $\mathcal{H}'$ via
\[r\cdot f=\mathbb{L}^{(1-g)rr'}f
\text{ whenever }f\in\overline{\Mot}(\mathcal{C}oh_{r',d})[\sqrt{\mathbb{L}}].
\]
Now we define the motivic Hall algebra as
\[\mathcal{H}\coloneqq\mathcal{H}'\otimes_{\mathbb{Z}}\mathbb{Z}[\mathbb{Z}].\]
We have the corresponding comultiplication and bilinear form defined in the same way as in \cite[Section 5.3 and Section 5.4]{FSS1}.

Let us denote the group $\overline{\Mot}(\mathcal{X})$ defined in \cite[Section 2.1]{FSS1} without surjective and universally injective relation by $\overline{\Mot}^{naive}(\mathcal{X})$ and the corresponding motivic Hall algebra in \cite[Section 5.2]{FSS1} by $\mathcal{H}^{naive}$. Then we have the following well-defined homomorphisms
\[\overline{\Mot}^{naive}(\mathcal{X})\to\overline{\Mot}(\mathcal{X})\text{ and }\mathcal{H}^{naive}\to\mathcal{H}.\]

Applying these homomorphisms to the identities in \cite[Section 5]{FSS1}, we get
identities in the ``corrected'' motivic Hall algebra such as \cite[Lemma 5.6.1 and Proposition 5.5.3]{FSS1}. The identity of \cite[Proposition 5.7.1]{FSS1} is also true in our $\mathcal{H}$: the argument goes through in arbitrary characteristic except for the part where the power structure is used, and this is where our Theorem \ref{power} should be applied. After
this, the calculations of \cite[Section 6]{FSS1} carry over
to the case of finite characteristic. Thus for the remaining sections, we can use the results in \emph{loc.~cit.} 
\subsection{Motivic classes of the stacks of vector bundles with nilpotent endomorphisms}\label{section44}
The following definitions come from \cite[Section 1.3]{FSS1} (note that we slightly re-write the definition of $J_\lambda^{mot}(z)$ as in \cite[Section 3]{Mel2}). We define the ``normalized'' zeta-function $\tilde{\zeta}_X(z)\coloneqq z^{1-g}\zeta_X(z)$ where $g$ is the genus of the curve~$X$.

Recall the definition of the motivic $L$-function in Section \ref{section14}. Set
\[
J^{mot}_{\lambda}(z)\coloneqq\prod_{\square\in\lambda}\frac{L^{mot}_X(z^{a(\square)}\mathbb{L}^{-l(\square)-1})}{(1-z^{a(\square)}\mathbb{L}^{-l(\square)-1})(1-z^{a(\square)}\mathbb{L}^{-l(\square)})_{\neq0}}\in\overline{\Mot}(k)[[z]],\]
where the product is over all boxes of the Young diagram corresponding to the partition. Here the notation $(-)_{\neq0}$ means that we omit this term if it is zero. In particular for the empty Young diagram $\lambda$ we have $J^{mot}_\lambda(z)=1$.

Set
\[L^{mot}(z_n,\ldots,z_1)\coloneqq\frac{1}{\prod_{i<j}\tilde{\zeta}_X\left(\frac{z_i}{z_j}\right)}\sum_{\sigma\in S_n}\sigma\left\{\prod_{i<j}\tilde{\zeta}_X\left(\frac{z_i}{z_j}\right)\frac{1}{\prod_{i<n}\left(1-\mathbb{L}\frac{z_{i+1}}{z_i}\right)}\cdot\frac{1}{1-z_1}\right\}.
\]
For a partition $\lambda=1^{r_1}2^{r_2}\ldots t^{r_t}$ such that $\sum_i r_i=n$, set $r_{<i}=\sum_{k<i}r_k$ and denote by $\res_\lambda$ the iterated residue along
\[\begin{matrix}
\frac{z_n}{z_{n-1}}=\mathbb{L}^{-1}, &\frac{z_{n-1}}{z_{n-2}}=\mathbb{L}^{-1},&\ldots,&\frac{z_{2+r_{<t}}}{z_{1+r_{<t}}}=\mathbb{L}^{-1},\\
\vdots & \vdots & & \vdots\\
\frac{z_{r_1}}{z_{r_1-1}}=\mathbb{L}^{-1}, &\frac{z_{r_1-1}}{z_{r_1-2}}=\mathbb{L}^{-1},&\ldots,&\frac{z_2}{z_1}=\mathbb{L}^{-1}.
\end{matrix}\]
Set 
\[\tilde{H}^{mot}_{\lambda}(z_{1+r_{<t}},\ldots,z_{1+r_{<i}},\ldots,z_1)\coloneqq\res_{\lambda}\left[L^{mot}(z_n,\ldots,z_1)\prod^n_{j=1\atop j\notin\{r_{<i}\}}\frac{dz_j}{z_j}\right]
\]
and
\[H^{mot}_{\lambda}(z)\coloneqq\tilde{H}^{mot}_{\lambda}(z^t\mathbb{L}^{-r_{<t}},\ldots,z^i\mathbb{L}^{-r_{<i}},\ldots,z)\in\overline{\Mot}(k)[[z]].\]
The notion of residue is not obvious for rational functions with coefficients belonging to $\overline{\Mot}(k)$, but a precise definition is given in \cite[Remark 1.3.2]{FSS1}.

We define ``Schiffmann's generating function'' as
\begin{equation}\label{schiff}
\Omega_X^{Sch,mot}\coloneqq\sum_{\lambda}w^{\vert\lambda\vert}\mathbb{L}^{(g-1)\langle\lambda,\lambda\rangle}J^{mot}_\lambda(z)H^{mot}_\lambda(z)\in\overline{\Mot}(k)[[z,w]].
\end{equation}
Here the sum is over all partitions $\lambda=(\lambda_1\ge\lambda_2\ge\ldots\ge\lambda_l>0)$, $\langle\lambda,\lambda\rangle\coloneqq\sum_i(\lambda'_i)^2$ where $\lambda'=(\lambda'_1\ge\ldots\ge\lambda'_i\ge\ldots)$ is the conjugate partition and $\vert\lambda\vert\coloneqq\sum_i\lambda_i$. 

Let $\mathcal{E}nd_{r,d}^{nilp,+}$ denote the stack of HN-nonnegative bundles on $X$ of rank $r$ and degree $d$ with nilpotent endomorphisms.

In view of the discussion in Section \ref{section330}, we obtain a version of \cite[Theorem~1.4.1]{FSS1} valid in arbitrary characteristic.
\begin{Th}\label{nilp}
We have the following identity in $\overline{\Mot}(k)[[z,w]].$
\[\sum_{r,d\ge 0}[\mathcal{E}nd^{nilp,+}_{r,d}]w^rz^d=\Omega_X^{Sch,mot}.\]
\end{Th}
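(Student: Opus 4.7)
The plan is to import the argument of \cite[Theorem 1.4.1]{FSS1} into our corrected setting, where the motivic Hall algebra $\mathcal{H}$ is the one built from $\overline{\Mot}(\mathcal{C}oh_{r,d})[\sqrt{\mathbb{L}}]$ with the surjective and universally injective relation included. The groundwork is already laid in Section \ref{section330}: there is a ring homomorphism $\mathcal{H}^{naive}\to\mathcal{H}$ compatible with the comultiplication, the bilinear form, and the integration map, so every Hall algebra identity proved in \cite[Sections 5--6]{FSS1} descends to an identity in $\mathcal{H}$. The task is therefore to track how the proof of \cite[Theorem 1.4.1]{FSS1} is assembled and to verify that each ingredient either descends via this homomorphism or has an arbitrary-characteristic replacement already established earlier in the paper.

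First I would stratify $\mathcal{E}nd^{nilp,+}_{r,d}$ by the Jordan type $\lambda$ of the nilpotent endomorphism, obtaining flags $0\subset E_1\subset\ldots\subset E_t=E$ of HN-nonnegative subbundles. In the Hall algebra $\mathcal{H}$ this expresses $\sum[\mathcal{E}nd^{nilp,+}_{r,d}]w^rz^d$ as a sum over partitions $\lambda$ of iterated Hall algebra products of characteristic functions of HN-nonnegative loci, weighted by the automorphism classes of nilpotent endomorphisms of type $\lambda$ (which produce the $J^{mot}_\lambda(z)$ factor once one sums over degrees using the motivic zeta function of $X$). Next I would apply the Kontsevich--Soibelman style residue/integral computation of \cite[Section 6]{FSS1}, which converts the iterated Hall algebra product into the residue expression defining $H^{mot}_\lambda(z)$, and collects the $\mathbb{L}^{(g-1)\langle\lambda,\lambda\rangle}$ prefactor from the dimension of $\mathcal{E}nd(E)$ on HN-nonnegative loci. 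Summing over $\lambda$ and using the definition \eqref{schiff} yields $\Omega_X^{Sch,mot}$.

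The step where characteristic matters is in the Hall algebra manipulation that appeals to the power structure on $\overline{\Mot}(k)$ — specifically \cite[Proposition 5.7.1]{FSS1}, whose proof uses the configuration-space description of $(A(t))^{[M]}$ for $M$ a symmetric power of $X$. This is exactly the content of our Theorem \ref{power}, which is valid in arbitrary characteristic thanks to Lemma \ref{G-invariant} and the power structure of Proposition \ref{powerstructure}. Everywhere else in \cite[Sections 5--6]{FSS1} where the argument implicitly uses that $\mathrm{char}\,k=0$ (for example, to identify $\Mot_{sch}(k)$ with the Grothendieck ring of varieties, or to localize/complete), we substitute the corresponding results from this paper: Theorem \ref{motequal} for the localization description of $\Mot(k)$, Proposition \ref{completioniso} for the dimensional completion, and the pre-$\lambda$ structure of Theorem \ref{prestructure}.

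The main obstacle is bookkeeping: one must check, identity by identity, that no step of the FSS1 proof relies on properties of $\overline{\Mot}^{naive}$ that fail after quotienting by the surjective-universally-injective relation (for instance, the fiberwise check of motivic identities mentioned in the remark at the end of Section \ref{section26}). The remedy is the one already indicated there: whenever a fiberwise check is needed, perform it first in $\Mot^{naive}(\mathcal{X})$ using \cite[Proposition 2.6.1]{FSS1} and then push forward along $\Mot^{naive}(\mathcal{X})\to\Mot(\mathcal{X})$. With this principle, combined with Theorem \ref{power} at the single power-structure step, the proof of \cite[Theorem 1.4.1]{FSS1} transfers verbatim and establishes the desired identity in $\overline{\Mot}(k)[[z,w]]$.
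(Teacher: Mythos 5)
Your proposal is correct and matches the paper's own approach: the paper simply invokes the discussion in Section~\ref{section330} (transfer along $\mathcal{H}^{naive}\to\mathcal{H}$, with Theorem~\ref{power} replacing the power-structure step in \cite[Proposition 5.7.1]{FSS1}, after which \cite[Section 6]{FSS1} carries over) to deduce Theorem~\ref{nilp} from \cite[Theorem 1.4.1]{FSS1}. Your write-up is just a more detailed unpacking of that same transfer, including the precaution of doing fiberwise checks in $\Mot^{naive}$ before pushing forward.
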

When the field $k$ is finite, the above theorem is the motivic version of one of the main intermediate results of \cite{Sch} (see \cite[Proposition 2.2 and (5.20)]{Sch}).
\subsection{Explicit formulas for the motivic classes of the stacks of Higgs bundles}\label{section45}
Finally we obtain the same formula for the motivic classes of the stacks of semistable Higgs bundles as in \cite[Theorem 1.3.3]{FSS1}.  We remove the restriction on the characteristic of the field.

Recall from Theorem \ref{prestructure} that $\overline{\Mot}(k)$ is a pre-$\lambda$-ring. Thus by Section \ref{section23}, we have the plethystic operations $\Exp$ and $\Log$.

We introduce the elements $B_{r,d}\in\overline{\Mot}(k)$ by the formula
\begin{equation}\label{Brd}
\sum_{r,d\in\mathbb{Z}_{\ge 0}\atop(r,d)\neq(0,0)}B_{r,d}w^rz^d=\mathbb{L}\Log\Omega_X^{Sch,mot}\in\overline{\Mot}(k)[[z,w]].
\end{equation}
Now let $\tau\ge 0$ be a rational number, we define $H_{r,d}\in\overline{\Mot}(k)$ by
\begin{equation}\label{Hrd}
\sum_{d/r=\tau}\mathbb{L}^{(1-g)r^2}H_{r,d}w^rz^d=\Exp\left(\sum_{d/r=\tau}B_{r,d}w^rz^d\right).
\end{equation}
Notice that we use the fact that if all monomials in a series have slope $\tau$, then all monomials in the plethystic exponent of this series also have slope $\tau$.

Recall that we have fixed a smooth projective geometrically connected curve $X$ of genus $g$. We have a result similar to \cite[Theorem 1.3.3]{FSS1} for the motivic classes of the moduli stacks of semistable Higgs bundles.
\begin{Th}\label{higgsformula}
Let $k$ be a field of arbitrary characteristic and let $X$ be a smooth projective geometrically connected curve over $k$. Assume that there exists a $k$-rational divisor of degree $1$ on $X$. We have
\begin{enumerate}[(i)]
\item For $r>0$ and $d>r(r-1)(g-1)$, the element $H_{r,d}$ are periodic in $d$ with period $r$, i.e., $H_{r,d}=H_{r,d+r}$.
\item For any $r>0$ and any $d$, if $e$ is large enough $($suffices to take $e>(r-1)(g-~1)-d/r)$, then we have
\[[\mathcal{H}iggs^{ss}_{r,d}]=H_{r,d+er}.\]
\end{enumerate}
\end{Th}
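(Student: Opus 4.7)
The plan is to reduce Theorem~\ref{higgsformula} to the corresponding characteristic-zero result \cite[Theorem 1.3.3]{FSS1} by invoking the framework of Section~\ref{section330}: once one works inside the ``corrected'' ring $\overline{\Mot}(k)$ with the surjective-universally-injective relation, every Hall algebra manipulation of \cite[Sections 5--6]{FSS1} --- wall-crossing, Kontsevich--Soibelman factorization, and the use of the power structure --- carries over verbatim to arbitrary characteristic, with Theorem~\ref{power} playing the role of the characteristic-zero power structure of \emph{loc.~cit.}

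The first step is to express $\sum_{r,d}[\mathcal{H}iggs^{+}_{r,d}]w^rz^d$ in terms of $\Omega_X^{Sch,mot}$. The forgetful morphism $\mathcal{H}iggs^{+}_{r,d}\to\mathcal{B}un^{+}_{r,d}$ is an affine bundle whose fiber over a vector bundle $E$ is $H^0(X,\mathcal{E}nd(E)\otimes\Omega_X)$, and a Riemann--Roch computation yields $[\mathcal{H}iggs^{+}_{r,d}]=\mathbb{L}^{(g-1)r^2}[\mathcal{E}nd^{+}_{r,d}]$ as in \cite[Lemma 3.5.2]{FSS1}. Hua's formula combined with the Jordan--Chevalley decomposition then relates $[\mathcal{E}nd^{+}_{r,d}]$ to $[\mathcal{E}nd^{nilp,+}_{r,d}]$ via a plethystic exponential with an extra factor of $\mathbb{L}$, and substituting Theorem~\ref{nilp} together with the definition (\ref{Brd}) gives
\[\sum_{r,d\ge 0}\mathbb{L}^{(1-g)r^2}[\mathcal{H}iggs^{+}_{r,d}]w^rz^d=\Exp\!\left(\sum_{(r,d)\neq(0,0)}B_{r,d}w^rz^d\right).\]

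Next I would separate the left-hand side by slope. In the motivic Hall algebra $\mathcal{H}$ of Section~\ref{section330}, the HN-stratification of $\mathcal{B}un^{+}_{r,d}$ together with the multiplicativity of the Hall product produces a Kontsevich--Soibelman factorization writing $\sum\mathbb{L}^{(1-g)r^2}[\mathcal{H}iggs^{+}_{r,d}]w^rz^d$ as an ordered product, indexed by slopes $\tau\ge 0$, of the slope-$\tau$ series $\sum_{d/r=\tau}\mathbb{L}^{(1-g)r^2}[\mathcal{H}iggs^{ss,+}_{r,d}]w^rz^d$, where I abbreviate $\mathcal{H}iggs^{ss,+}_{r,d}\coloneqq\mathcal{H}iggs^{ss}_{r,d}\cap\mathcal{H}iggs^{+}_{r,d}$. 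On the other hand, by Proposition~\ref{exp} the plethystic exponential on the right of the displayed identity factors as the product over $\tau\ge 0$ of $\Exp(\sum_{d/r=\tau}B_{r,d}w^rz^d)$, which by (\ref{Hrd}) equals $\sum_{d/r=\tau}\mathbb{L}^{(1-g)r^2}H_{r,d}w^rz^d$. Matching slope by slope then gives $[\mathcal{H}iggs^{ss,+}_{r,d}]=H_{r,d}$ for every $d/r\ge 0$.

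To conclude, I would use the hypothesis that $X$ carries a $k$-rational divisor of degree one: tensoring with a line bundle of degree $e$ built from it induces an isomorphism $\mathcal{H}iggs^{ss}_{r,d}\simeq\mathcal{H}iggs^{ss}_{r,d+er}$, and for $e>(r-1)(g-1)-d/r$ every semistable Higgs bundle of slope $d/r+e$ has HN-nonnegative underlying vector bundle, so $\mathcal{H}iggs^{ss}_{r,d+er}=\mathcal{H}iggs^{ss,+}_{r,d+er}$ and the previous paragraph supplies $[\mathcal{H}iggs^{ss}_{r,d}]=H_{r,d+er}$, proving~(ii); part~(i) then follows by comparing the identifications for $e$ and $e+1$, which shifts $d$ by $r$. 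The main obstacle is the verification of the Hall algebra factorization itself: one must check that each step of \cite[Section 6]{FSS1}, in particular the use of the power structure in the proof of the wall-crossing identity, remains valid once the ring of \emph{loc.~cit.}\ is replaced by $\overline{\Mot}(k)$; this amounts to invoking Theorem~\ref{power} in place of its characteristic-zero analogue and to lifting the fiberwise identities of \cite[Section 5]{FSS1} from the naive ring through the homomorphism $\overline{\Mot}^{naive}(\mathcal{X})\to\overline{\Mot}(\mathcal{X})$ mentioned at the end of Section~\ref{section330}.
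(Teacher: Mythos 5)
Your overall strategy --- reduce to \cite[Sections 3.5--3.8, 6.4]{FSS1} via Hua's formula, a Kontsevich--Soibelman wall-crossing factorization, and a limiting argument, with Theorem~\ref{power} and the ``corrected'' motivic classes of Section~\ref{section330} doing the work of making the characteristic-zero argument available --- is precisely what the paper's proof does (the paper's proof is a one-line citation of those sections of \cite{FSS1}). However, there is a genuine misstep in your intermediate step that would not survive close inspection.

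You abbreviate $\mathcal{H}iggs^{ss,+}_{r,d}\coloneqq\mathcal{H}iggs^{ss}_{r,d}\cap\mathcal{H}iggs^{+}_{r,d}$ and claim the Kontsevich--Soibelman factorization of $\sum\mathbb{L}^{(1-g)r^2}[\mathcal{H}iggs^{+}_{r,d}]w^rz^d$ by slope yields the series of these intersections, concluding $[\mathcal{H}iggs^{ss,+}_{r,d}]=H_{r,d}$ \emph{for every} $d/r\ge 0$. This is not what the factorization gives. The HN-type stratification that one can run \emph{inside} $\mathcal{H}iggs^+$ (so that all strata stay in $\mathcal{H}iggs^+$) is the one whose filtration steps are restricted to $\Phi$-invariant HN-nonnegative subbundles; the ``semistable'' objects for that stratification are the \emph{nonnegative-semistable} Higgs bundles of \cite[Section 3.3]{FSS1} --- those $(E,\Phi)$ with $E$ HN-nonnegative such that every $\Phi$-invariant HN-nonnegative subbundle $F$ satisfies $\mu(F)\le\mu(E)$. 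This condition is strictly weaker than semistability: a nonnegative-semistable Higgs bundle may possess a destabilizing $\Phi$-invariant subbundle which simply fails to be HN-nonnegative, hence never enters the comparison. Consequently the stack appearing in the factorization is the nonnegative-semistable one, which in low slopes properly contains $\mathcal{H}iggs^{ss}_{r,d}\cap\mathcal{H}iggs^{+}_{r,d}$; conversely, stratifying $\mathcal{H}iggs^{+}$ by the honest Higgs HN-filtration does not work, because the graded pieces may leave $\mathcal{H}iggs^{+}$ entirely. Your final step is actually correct, because for $e$ large (precisely $e>(r-1)(g-1)-d/r$, via \cite[Lemma 3.3.2(i)]{FSS1}) tensoring by a degree-$e$ divisor identifies $\mathcal{H}iggs^{ss}_{r,d}$ with the slope-$(d/r+e)$ nonnegative-semistable stack, so the discrepancy between the two notions vanishes exactly where it matters; but the statement ``$[\mathcal{H}iggs^{ss,+}_{r,d}]=H_{r,d}$ for every $d/r\ge 0$'' with your intersection definition is false, and you should replace it by the nonnegative-semistable stack throughout the factorization argument before passing to the limit.
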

\begin{proof}
The proof of \cite[Section 6.4]{FSS1} goes through in view of our discussion in Section \ref{section330}. Repeating almost literally the arguments of \cite[Sections 3.5-3.8,~6.4]{FSS1}, applying the homomorphism $\overline{\Mot}^{naive}(\mathcal{X})\to\overline{\Mot}(\mathcal{X})$ and using our theory of ``corrected'' motivic classes, we derive the theorem from Theorem \ref{nilp}.
\end{proof}
\begin{Rem}\label{MS0}
Recall that $g$ is the genus of $X$. The volumes of the stacks of Higgs bundles $\vert\mathcal{H}iggs^{ss}_{r,d}(\mathbb{F}_q)\vert$ viewed as functions of $q$ and the Weil numbers of $X$ belong to a certain $\lambda$-ring $K_g$ (see Section \ref{section5} below for more details). Thus we can, following Mozgovoy and Schiffmann, define the Donaldson-Thomas invariants $\Omega(r,d)$ by the following formula
\[\sum_{d/r=\tau}\frac{\Omega(r,d)}{q-1}w^rz^d=\Log\left(\sum_{d/r=\tau}q^{(1-g)r^2}\vert\mathcal{H}iggs^{ss}_{r,d}(\mathbb{F}_q)\vert w^rz^d\right).\]
On the other hand, define the invariants $\Omega^{+}_{r,d}$ by
\begin{equation}
\sum_{r,d\in\mathbb{Z}_{\ge 0}\atop(r,d)\neq(0,0)}\Omega^+_{r,d}w^rz^d=(q-1)\Log\left(\sum_{\lambda}q^{(g-1)\langle\lambda,\lambda\rangle}J_{\lambda}(z)H_{\lambda}(z)w^{\vert\lambda\vert}\right),   
\end{equation}
where $J_{\lambda}$ and $H_{\lambda}$ are defined in \cite[Section 1.4]{Sch}.

Then, according to \cite[Theorem 1.1(ii)]{MS1}, we have \[\Omega(r,d)=q\Omega_{r,d}^+.\] whenever $d$ is large enough. This formula gives an explicit answer for the volumes $\vert\mathcal{H}iggs^{ss}_{r,d}(\mathbb{F}_q)\vert$. It is not hard to derive this formula from our Theorems \ref{countingstack} and \ref{higgsformula}. We suppress the details because in the next section we will give a simpler formula for $\vert\mathcal{H}iggs^{ss}_{r,d}(\mathbb{F}_q)\vert$ based on results of Mellit.
\end{Rem}
\section{Mellit's simplification}\label{section5}
We would like to use Mellit's results \cite{Mel1,Mel2} to obtain a formula for the motivic classes of the stacks of Higgs bundles in the universal $\lambda$-ring quotient that are much simpler than the formulas in Section \ref{section45}. We also give a simpler formula for $\vert\mathcal{H}iggs^{ss}_{r,d}(\mathbb{F}_q)\vert$.
\subsection{Motivic classes of the stacks of Higgs bundles in the universal $\lambda$-ring quotient}
Recall from Theorem \ref{prestructure} that we have a pre-$\lambda$-ring structure on the ring $\overline{\Mot}(k)$ such that for a reduced quasi-projective scheme $X$ we have
\[\lambda_X(z)\coloneqq(\zeta_X(-z))^{-1}.\]
Recall the definition of $\overline{\Mot}^{\lambda-ring}(k)$, see Section \ref{section260}. 
The following discussion is a no-puncture version of \cite[Section 8.6-8.7]{FSS3}. Recall the definitions of the motivic $L$-function, $\Omega_X^{mot}$ and $\Omega_X^{Sch,mot}$, see (\ref{Lfunction}), (\ref{omegamot}) and (\ref{schiff}). Set 
\[\mathbb{H}^{mot}_X\coloneqq(1-z^2)\Log\Omega^{mot}_X,\]
and let $\mathbb{H}^{
Sch,mot}_X$ be the image of $(1-z)\Log\Omega^{Sch,mot}_X$ under the projection $\overline{\Mot}(k)\to\overline{\Mot}^{\lambda-ring}(k)$. 

The following proposition is a no-puncture version of \cite[Corollary 8.7.4]{FSS3}.
\begin{Prop}[Fedorov--Soibelman--Soibelman]\label{twoh}
The coefficients of $\mathbb{H}^{
Sch,mot}_X$ and $\mathbb{H}^{mot}_X$ at $w^r$ belong to the ring  $\overline{\Mot}^{\lambda-ring}(k)[z^{\pm1}]$ and we have
\[\mathbb{H}^{
Sch,mot}_X\Big\vert_{z=1}=\mathbb{H}^{mot}_X\Big\vert_{z=1}.
    \]
\end{Prop}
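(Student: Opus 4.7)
The plan is to follow the proof of \cite[Corollary 8.7.4]{FSS3}, which handles the characteristic-zero case, and to observe that every step of that argument is formal in the $\lambda$-ring structure and therefore extends to arbitrary characteristic once we work inside $\overline{\Mot}^{\lambda-ring}(k)$. The two essential inputs from earlier sections are Theorem \ref{prestructure}, which gives $\overline{\Mot}(k)$ the pre-$\lambda$ structure making $L_X^{mot}(z)$ behave as in characteristic zero, and Section \ref{section260}, which produces the universal $\lambda$-ring quotient in which $\mathbb{H}_X^{mot}$ and $\mathbb{H}_X^{Sch,mot}$ live. On top of these, I would invoke Section \ref{section330} so that the Hall-algebra manipulations of \cite[Section 5]{FSS1} survive in arbitrary characteristic.

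First I would verify the Laurent polynomial statement. For each fixed $r$, only finitely many partitions of size $r$ contribute to the coefficient of $w^r$ in either $\Omega_X^{mot}$ or $\Omega_X^{Sch,mot}$, so each such coefficient is \emph{a priori} a rational function of $z$ with coefficients in $\overline{\Mot}^{\lambda-ring}(k)$. By the Mellit-type rationality results used in \cite{Mel1,Mel2}, the appropriate normalizations of $\Log\Omega_X^{mot}$ and $\Log\Omega_X^{Sch,mot}$ have poles only at $z=\pm1$ (respectively $z=1$), which are cleared by the factors $(1-z^2)$ and $(1-z)$. Since Mellit's identities are formal and take place inside a universal $\lambda$-ring generated by $\mathbb{L}$ and the roots of $L_X^{mot}(z)$, they specialize to any $\lambda$-ring receiving these data, and in particular to $\overline{\Mot}^{\lambda-ring}(k)$.

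Next, to prove the equality at $z=1$, I would follow the combinatorial step of \cite[Section 8.7]{FSS3}. After substituting $z=1$, both series become weighted sums over partitions in which the weights are products of values of $L_X^{mot}$ at various monomials in $\mathbb{L}$. The remaining identity is then a bookkeeping statement about dual partitions, arm and leg statistics, and $\langle\lambda,\lambda\rangle$; since it is independent of the coefficient ring, it holds in $\overline{\Mot}^{\lambda-ring}(k)$ as soon as it holds formally, and one can just invoke the characteristic-zero identity.

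The main obstacle is transferring Mellit's rationality theorems into statements about $\overline{\Mot}^{\lambda-ring}(k)$. However, this is essentially a matter of tracking through the original proofs and checking that each $\lambda$-operation used is available in our ring. The corrected motivic Hall algebra of Section \ref{section330}, together with Theorem \ref{higgsformula}, provides exactly the ingredients needed to make this transfer valid in finite characteristic; in particular, the characteristic-zero identification of $\mathbb{H}_X^{Sch,mot}\vert_{z=1}$ with a Higgs Donaldson--Thomas series carries over to our setting through the homomorphism $\overline{\Mot}^{naive}(\mathcal{X})\to\overline{\Mot}(\mathcal{X})$ used in Section \ref{section330}.
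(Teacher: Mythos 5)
Your core observation — that the identity is a formal $\lambda$-ring statement and therefore transfers to $\overline{\Mot}^{\lambda-ring}(k)$ once the relevant data are present — is the right one, and the paper's (one-line) proof rests on exactly this: \cite[Corollary 8.7.4]{FSS3} is stated and proved in the \emph{universal} $\lambda$-ring $K_g$ of Section \ref{section5}, not over a field, so it specializes to $\overline{\Mot}^{\lambda-ring}(k)$ via the $\lambda$-ring homomorphism $\mu$ without any re-derivation, hence with no characteristic restriction. The paper simply cites that corollary at $\delta=0$, $D=\emptyset$ and is done.

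Where your proposal goes astray is in what you claim is needed to make the transfer work. You invoke the motivic Hall algebra of Section \ref{section330}, Theorem \ref{higgsformula}, and ``the characteristic-zero identification of $\mathbb{H}_X^{Sch,mot}\vert_{z=1}$ with a Higgs Donaldson--Thomas series.'' None of this is relevant here. Proposition \ref{twoh} is a purely combinatorial identity between the two explicitly defined series $\Omega^{Sch,mot}_X$ in \eqref{schiff} and $\Omega^{mot}_X$ in \eqref{omegamot}; no stacks of Higgs bundles, no Hall-algebra products, and no counting of vector bundles enter its proof. The Hall-algebra corrections of Section \ref{section330} are needed for Theorem \ref{higgsformula}, which is a separate input to Theorem \ref{degenerate1}; citing them here mixes up the logical dependencies and, worse, suggests the proof of Proposition \ref{twoh} would have to be re-run from scratch in finite characteristic, which it does not. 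Drop the references to Hall algebras, Theorem \ref{higgsformula}, and DT series, and state clearly that \cite[Corollary 8.7.4]{FSS3} is an identity in $K_g[[z,w]]$ whose validity is independent of $k$; then the specialization argument you sketch in your first and third paragraphs is exactly the paper's proof.
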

\begin{proof}
This is a particular case of \cite[Corollary 8.7.4]{FSS3} when $\delta=0$ and $D=\emptyset$.
\end{proof}
In characteristic zero, the following theorem is a particular case of \cite[Theorem~1.13.1]{FSS3} when $\delta=0,D=\emptyset$ and $\varepsilon=0$. Using our definition of the ring of motivic classes and the formulas in Theorem \ref{higgsformula}, we will see that the theorem is valid in arbitrary characteristic. It gives a formula for the motivic classes of
moduli stacks of semistable Higgs bundles in $\overline{\Mot}^{\lambda-ring}(k)$.
\begin{Th}\label{degenerate1}
Let $k$ be a field of arbitrary characteristic and let $X$ be a smooth projective geometrically connected curve over $k$. Assume that there exists a $k$-rational divisor of degree $1$ on $X$. Let $d/r=d_0/r_0$ where $r_0$ and $d_0$ are coprime integers with $r_0>0$. The image of the motivic class $[\mathcal{H}iggs_{r,d}^{ss}]$ in $\overline{\Mot}^{\lambda-ring}(k)$ is equal to the coefficient at $w^r$ in
\[
\mathbb{L}^{(g-1)r^2}\Exp\left(\mathbb{L}\left(\mathbb{H}^{mot}_X\Big\vert_{z=1}\right)_{[r_0]}\right),
\]
where $A(w)_{[r_0]}$ stands for the sum of the monomials whose exponents are multiples of~$r_0$.
\end{Th}
\begin{proof}
According to Theorem \ref{higgsformula}, for $e$ large enough, the motivic class of $\mathcal{H}iggs_{r,d}^{ss}$ in $\overline{\Mot}(k)$ is equal to the coefficient at $w^rz^{d+er}$ in
\[
\mathbb{L}^{(g-1)r^2}\Exp\left(\mathbb{L}\Log\Omega^{Sch,mot}_X\right).
\]
Applying the homomorphism $\overline{\Mot}(k)\to\overline{\Mot}^{\lambda-ring}(k)$, by definition of $\mathbb{H}^{Sch,mot}_X$ we
see that for $e$ large enough, the motivic class of $\mathcal{H}iggs_{r,d}^{ss}$ in $\overline{\Mot}^{\lambda-ring}(k)$ is equal to the coefficient at $w^rz^{d+er}$ in
\[
\mathbb{L}^{(g-1)r^2}\Exp\left(\mathbb{L}\frac{\mathbb{H}^{Sch,mot}_X}{1-z}\right).
\]
The rest of the proof is completely similar to that of \cite[Lemma 9.2.3]{FSS3} in view of Proposition \ref{twoh}.
\end{proof}
\subsection{Counting Higgs bundles over finite fields}
Recall that $S_n$ is the permutation group of $n$
elements. Let $g\ge1$ be an integer. Recall some constructions from \cite[Section 8.3]{FSS3}. We consider the ring of Laurent polynomials in $g+1$ variables
$\mathbb{Z}[Q^{\pm1},\alpha_1^{\pm1},\ldots,\alpha_g^{\pm1}]$ ($Q$ was denoted by $q$ in \cite[Section 8.3]{FSS3}). Now we describe the action of the semi-direct product $W_g\coloneqq S_g\rtimes(\mathbb{Z}/2\mathbb{Z})^g$ on this ring as follows: $W_g$
fixes $Q$, $S_g$ permutes $\alpha_i$, while the $i$-th copy of $\mathbb{Z}/2\mathbb{Z}$ maps $\alpha_i$ to $Q\alpha_i^{-1}$
and $\alpha_j$ to itself whenever $j\neq i$. We denote by $R_g$ the ring of invariants:
\[R_g\coloneqq\mathbb{Z}[Q^{\pm1},\alpha_1^{\pm1},\ldots,\alpha_g^{\pm1}]^{W_g
}.\]
For convenience, we set $\alpha_{i+g}\coloneqq Q\alpha_i^{-1}$ for $i=1,\ldots,g$. We define a \emph{universal $L$-function}
\[L^{univ}_{g}(z)\coloneqq\prod_{i=1}^{2g}(1-\alpha_iz)=\prod_{i=1}^{g}(1-\alpha_iz)\prod_{i=1}^{g}(1-Q\alpha_i^{-1}z)\in R_g[z].\]
We denote by $K_g$ the localization of $R_g$ with respect to the multiplicative set generated
by $Q^i-1$, where $i\ge1$.

For a curve $X$ over $\mathbb{F}_q$, by \cite[Proposition 8.3.1(i)]{FSS3}, similarly to the proof of \cite[Proposition 8.5.1(i)]{FSS3}, we define the evaluation morphism $ev_X\colon K_g\to\mathbb{Q}$ sending $Q$ to $q\in\mathbb{Q}$, and the coefficient of $L^{univ}_{g}$ at $z^i$ to the coefficient of $L_X$ at $z^i$ for $i=1,\ldots,2g$, where
\[L_X(z)\coloneqq\left(\sum_{n\ge 0}\vert Sym^nX(\mathbb{F}_q)\vert\cdot z^n\right)(1-z)(1-qz)\]
is the ``usual'' $L$-function on $X$.

Set (cf. Section \ref{section14})
\[\Omega_{g}^{univ}\coloneqq\sum_{\lambda}w^{\vert\lambda\vert}Q^{g\langle\lambda,\lambda\rangle}\prod_{\square\in\lambda}\frac{L^{univ}_{g}(z^{2a(\square)+1}Q^{-l(\square)-1})}{(z^{2a(\square)+2}-Q^{l(\square)})(z^{2a(\square)}-Q^{l(\square)+1})}\in K_g[[z,w]].\]
We define inverse bijections
\[\Exp_g\colon K_g[[z,w]]^{+}\to(1+K_g[[z,w]]^{+})^{\times}\]
\[\Log_g\colon (1+K_g[[z,w]]^{+})^{\times}\to K_g[[z,w]]^{+}\]
similarly to the definitions of $\Exp$ and $\Log$ in Section~\ref{realsection3}.

Set 
\[\mathbb{H}^{univ}_{g}\coloneqq(1-z^2)\Log_g\Omega^{univ}_{g}.\]
Note that we can evaluate $\mathbb{H}^{univ}_{g}$ at $z=1$, because the coefficients of $\mathbb{H}^{univ}_{g}$ at $w^r$ belong to $K_g[z^{\pm1}]$, see \cite[Theorem 8.7.1(i)]{FSS3}.
\begin{Th}\label{finitefieldhiggs}
Let $X$ be a smooth projective geometrically connected curve over $\mathbb{F}_q$. Assume that there exists an $\mathbb{F}_q$-rational divisor of degree $1$ on $X$. Let $d/r=d_0/r_0$ where $r_0$ and $d_0$ are coprime integers with $r_0>0$. The volume $\vert\mathcal{H}iggs^{ss}_{r,d}(\mathbb{F}_q)\vert$ is the image under $ev_X$ of the coefficient at $w^r$ in
\[
Q^{(g-1)r^2}\Exp_g\left(Q\left(\mathbb{H}^{univ}_{g}\Big\vert_{z=1}\right)_{[r_0]}\right),
\]
where $A(w)_{[r_0]}$ stands for the sum of the monomials whose exponents are multiples of~$r_0$.
\end{Th}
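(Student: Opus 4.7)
The plan is to transport the identity of Theorem \ref{degenerate1} from $\overline{\Mot}^{\lambda-ring}(\mathbb{F}_q)$ to $K_g$ via a $\lambda$-ring homomorphism $\Phi\colon K_g\to\overline{\Mot}^{\lambda-ring}_{fin-type}(\mathbb{F}_q)$ sending $Q\mapsto\mathbb{L}$ and the coefficients of $L^{univ}_g(z)$ to those of $L^{mot}_X(z)$, and then apply the counting measure of Theorem \ref{countingstack}. The punchline will be that $\#\circ\Phi=ev_X\colon K_g\to\mathbb{Q}$: both are ring homomorphisms sending $Q$ to $q$ and the coefficients of $L^{univ}_g(z)$ to those of $L_X(z)$ (the latter by Remark \ref{countzeta}), so they agree by the defining property of $ev_X$.

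To construct $\Phi$ I would follow the no-puncture analog of \cite[Proposition 8.5.1]{FSS3}. The ring $R_g$ is generated over $\mathbb{Z}$ by $Q^{\pm1}$ and the elementary symmetric functions of $\alpha_1,\ldots,\alpha_{2g}$, which correspond to the coefficients of $L^{univ}_g(z)$. Assigning these to the coefficients of $L^{mot}_X(z)\in\Mot(\mathbb{F}_q)[z]$ and $Q\mapsto\mathbb{L}$ yields a ring homomorphism $R_g\to\Mot(\mathbb{F}_q)$. Since $\mathbb{L}^i-1$ is invertible in $\Mot(\mathbb{F}_q)$ by Theorem \ref{motequal}, this extends uniquely to the localization $K_g\to\Mot(\mathbb{F}_q)$; composing with $\Mot(\mathbb{F}_q)\to\overline{\Mot}^{\lambda-ring}_{fin-type}(\mathbb{F}_q)$ produces $\Phi$.

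The main technical obstacle is checking that $\Phi$ is a \emph{$\lambda$-ring} homomorphism, since only then does it intertwine $\Exp_g$, $\Log_g$ with $\Exp$, $\Log$. By \cite[Corollary 8.4.2]{FSS3} the $\lambda$-structure on $K_g$ is pinned down by the Adams operations $\psi_n(Q)=Q^n$ and $\psi_n(\alpha_i)=\alpha_i^n$. To verify compatibility on generators I would route through the \'etale realization $H^\ast$ of Proposition \ref{Hast}: by Proposition \ref{conj} this is a homomorphism of pre-$\lambda$-rings, and it sends the roots $\alpha_i$ of $L^{univ}_g(z)$ to the $q$-Weil numbers $\beta_i$ of $X$ appearing in the Frobenius factorization on $H^1(\overline{X},\overline{\mathbb{Q}}_\ell)$; since $\psi_n[E_{\beta_i}]=[E_{\beta_i^n}]$ by the definition of the $\lambda$-structure on $R_q$ in Section \ref{lambdaring}, the $\psi_n$-equivariance of $\Phi$ follows on generators and hence everywhere.

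Once $\Phi$ is established as a $\lambda$-ring homomorphism, a direct comparison of the defining formulas shows $\Phi(\Omega^{univ}_g)=\Omega_X^{mot}$, hence $\Phi(\mathbb{H}^{univ}_g)=\mathbb{H}^{mot}_X$, so $\Phi$ carries the coefficient at $w^r$ in $Q^{(g-1)r^2}\Exp_g(Q(\mathbb{H}^{univ}_g\vert_{z=1})_{[r_0]})$ to that in $\mathbb{L}^{(g-1)r^2}\Exp(\mathbb{L}(\mathbb{H}^{mot}_X\vert_{z=1})_{[r_0]})$. By Theorem \ref{degenerate1} the latter is the image of $[\mathcal{H}iggs^{ss}_{r,d}]$ in $\overline{\Mot}^{\lambda-ring}_{fin-type}(\mathbb{F}_q)$, and applying $\#$ yields $\vert\mathcal{H}iggs^{ss}_{r,d}(\mathbb{F}_q)\vert$ by Theorem \ref{countingstack}, completing the reduction.
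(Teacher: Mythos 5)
Your overall strategy is the same as the paper's: construct a $\lambda$-ring homomorphism from (a localization of) $K_g$ to $\overline{\Mot}^{\lambda-ring}(\mathbb{F}_q)$, show that composing with the counting measure recovers $ev_X$ (this is the paper's Lemma \ref{finitecount}), then transport Theorem \ref{degenerate1} and apply $\#$. The paper, however, does not construct this homomorphism from scratch; it simply cites \cite[Proposition 8.5.1(iii)]{FSS3} for the $\lambda$-ring homomorphism $\mu\colon V_g\to\overline{\Mot}^{\lambda-ring}(\mathbb{F}_q)[[z]]$, where $V_g$ is the localization of $R_g[z]$ at all $Q^i-z^j$.

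The step where you verify that $\Phi$ is a $\lambda$-ring homomorphism has a genuine gap. You propose to check $\psi_n$-equivariance by post-composing with the \'etale realization $H^\ast$ and using the explicit Adams operations on $R_q$. But what this establishes is only that $H^\ast\circ\Phi\circ\psi_n = H^\ast\circ\psi_n\circ\Phi$; that is, $\Phi\circ\psi_n - \psi_n\circ\Phi$ takes values in the kernel of $H^\ast$ on $\overline{\Mot}^{\lambda-ring}(\mathbb{F}_q)$. Nowhere does the paper claim $H^\ast$ is injective there — indeed, the remark in Section \ref{section260} explicitly notes uncertainty about whether even $\Mot(k)\to\overline{\Mot}^{\lambda-ring}(k)$ is injective — so you cannot conclude $\psi_n$-equivariance of $\Phi$ itself. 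This is precisely the point that \cite[Proposition 8.5.1(iii)]{FSS3} handles, and the paper treats it as a black box rather than reproving it.

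Two smaller points. First, to apply $\Phi$ coefficient-wise to $\Omega^{univ}_g$ you need the map defined on $V_g$ (not merely $K_g$), since the summands of $\Omega^{univ}_g$ have denominators $z^{2a(\square)+2}-Q^{l(\square)}$ and $z^{2a(\square)}-Q^{l(\square)+1}$; this is why the paper works with $V_g$. Second, your argument that $\#\circ\Phi=ev_X$ by comparing on $Q$ and the coefficients of $L^{univ}_g(z)$ is correct and is exactly the content of the paper's Lemma \ref{finitecount} (which invokes \cite[Proposition 8.3.1(i)]{FSS3} for the generation statement and \cite[Proposition 8.5.1(i)]{FSS3} for $\mu(L^{univ}_g)=L^{mot}_X$).
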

Before giving the proof, we need some preliminaries. We define $V_g$ as the localization of $R_g[z]$ with respect to the multiplicative set generated by $Q^i-z^j$,
where $i,j\ge0$, $(i,j)\neq(0,0)$. Note that $K_g[z]\subset V_g$. 

Let $\mu\colon V_g\to\overline{\Mot}^{\lambda-ring}(\mathbb{F}_q)[[z]]$ be the homomorphism of $\lambda$-rings as in \cite[Proposition 8.5.1(iii)]{FSS3}, sending $L^{univ}_g$ to $L^{mot}_X$ (see \cite[Proposition 8.5.1(i)]{FSS3}).

We denote by $\overline{\Mot}_{fin-type}^{\lambda-ring}(\mathbb{F}_q)$ the image of the composition \[\Mot(\mathbb{F}_q)\to\overline{\Mot}(\mathbb{F}_q)\to\overline{\Mot}^{\lambda-ring}(\mathbb{F}_q).\] By Theorem \ref{countingstack}, the counting measure $\#$ factors through $\overline{\Mot}_{fin-type}^{\lambda-ring}(\mathbb{F}_q)$.

We abuse notation and let 
\[\#\colon \overline{\Mot}_{fin-type}^{\lambda-ring}(\mathbb{F}_q)[[z]]\to\mathbb{Q}[[z]]\]
be the unique extension of $\#$ sending $z$ to $z$.
Similarly we abuse notation and let 
\[ev_X\colon V_g\to\mathbb{Q}[[z]]\]
be the unique extension of the evalutation morphism $ev_X\colon K_g\to\mathbb{Q}$ sending $z$ to $z$.
\begin{Lemma}\label{finitecount}
We have $\#\circ\mu=ev_X$.
\end{Lemma}
\begin{proof}
Since $V_g$ is a localization of $R_g[z]$, it suffices to check that we have a commutative diagram:
\[\begin{tikzcd}
R_g\arrow{dr}{ev_X}\arrow{r}{\mu}& \overline{\Mot}_{fin-type}^{\lambda-ring}(\mathbb{F}_q)\arrow{d}{\#}\\
  & \mathbb{Q}.
\end{tikzcd}\]
By \cite[Proposition 8.3.1(i)]{FSS3}, $R_g$ is generated by the coefficients of $L^{univ}_{g}$ as a $\mathbb{Z}[Q^{\pm1}]$-algebra, so it suffices to check the commutativity of the above diagram for $L^{univ}_{g}$ and $Q$. For $Q$ we have
\[\#\circ\mu(Q)=\#(\mathbb{L})=q=ev_X(Q).\]
Now we have the following sequence of equalities
\[\#\circ\mu(L^{univ}_{g}(z))=\#(L^{mot}_X(z))=L_X(z)=ev_X(L^{univ}_{g}(z)),\]which follow from the definitions of $\mu$, of the counting measure $\#$, and of $ev_X$ respectively.
\end{proof}
\begin{proof}[Proof of Theorem \ref{finitefieldhiggs}]
Applying $\mu$ coefficient-wise to $\Omega_{g}^{univ}$, we get $\Omega_X^{mot}$, see \cite[Section 8.6]{FSS3}. Since $\mu$ is a homomorphism of pre-$\lambda$-rings, we have
\[\mu\left(\mathbb{H}^{univ}_{g}\right)=\mathbb{H}^{mot}_X.\]
We denote by $H^{univ}_r$ the coefficient at $w^r$ in
\[Q^{(g-1)r^2}\Exp_g\left(Q\left(\mathbb{H}^{univ}_{g}\Big\vert_{z=1}\right)_{[r_0]}\right).
\]
By Theorem \ref{degenerate1}, we have 
\[\mu(H^{univ}_r)=[\mathcal{H}iggs_{r,d}^{ss}]\in\overline{\Mot}^{\lambda-ring}(\mathbb{F}_q).\]
Now, applying $\#$ to the above equation and using Lemma \ref{finitecount}, we obtain the result.
\end{proof}
\begin{Rem}
This result is very similar to the no-puncture case of \cite[Corollary~7.8]{Mel3}. However, we obtained this result from the motivic statement, namely, from Theorem \ref{degenerate1}.  
\end{Rem}

\Addresses
\end{document}